 \def\Z{{\mathbb{Z}}} \def\R{{\mathbb{R}}} \def\N{{\mathbb{N}}} \def\Q{{\mathbb{Q}}} \def\X{{\mathbb{X}}} 
\def\p{\partial} \def\M{{\mathcal{M}}} \def\B{{\mathcal{B}}} \def\BM{\mathcal{B}\mathcal{M}}
\newtheorem{theorem}{Theorem}[section]
\newtheorem{lemma}[theorem]{Lemma}
\newtheorem{proposition}[theorem]{Proposition}
\newtheorem{corollary}[theorem]{Corollary}
\newtheorem{definition1}[theorem]{Definition}
\newenvironment{definition}{\begin{definition1}\rm}{\end{definition1}}
\newtheorem*{theorema}{Theorem A}
\newtheorem*{theoremb}{Theorem B}
\newtheorem*{theoremb1}{Theorem B1}
\newtheorem*{theoremb2}{Theorem B2}
\newtheorem*{theoremd}{Dichotomy Theorem}
\newtheorem{remark1}[theorem]{Remark}
\newenvironment{remark}{\begin{remark1}\rm}{\end{remark1}}
\newtheorem{example1}[theorem]{Example}
\def\barray{\begin{eqnarray*}}             \def\earray{\end{eqnarray*}}
\def\beq{\begin{equation}} \def\eeq{\end{equation}}
\title{A dichotomy theorem for minimizers of monotone recurrence relations}  
\author{Bla\v z Mramor\thanks{Department of Mathematics, VU University Amsterdam, The Netherlands, {\tt b.mramor@vu.nl}.} \ and Bob Rink\thanks{Department of Mathematics, VU University Amsterdam, The Netherlands, {\tt b.w.rink@vu.nl}.}\ . }
\begin{document}  
\hyphenation{ge-ne-ra-lized}
\maketitle
\noindent 

\abstract{Variational monotone recurrence relations arise in solid state physics as generalizations of the Frenkel-Kontorova model for a ferromagnetic crystal. For such problems, Aubry-Mather theory establishes the existence  of ``ground states'' or  ``global minimizers" of arbitrary rotation number.

A nearest neighbor crystal model is equivalent to a Hamiltonian twist map. In this case, the global minimizers have a special property: they can only cross once. As a nontrivial consequence, every one of them has the Birkhoff property. In crystals with a larger range of interaction and for higher order recurrence relations, the single crossing property does not hold and there can exist global minimizers that are not Birkhoff.

In this paper we investigate the crossings of global minimizers. Under a strong twist condition, we prove the following dichotomy: they are either Birkhoff, and thus very regular, or extremely irregular and nonphysical: they then grow exponentially and oscillate. For Birkhoff minimizers, we also prove certain strong ordering properties that are well known for twist maps.}\\

\section{Introduction}

The physical model that we take as the main motivation for the results of this paper, is a generalized Frenkel-Kontorova crystal model. The classical Frenkel-Kontorova model, first introduced in \cite{Frenkel-Kontorova}, can be used to describe an infinite array of particles that lie in a periodic background potential, where each particle is attracted to its closest neighbors by linear forces. Let a sequence $x = (..., x_{-1}, x_0, x_1,...)$ of real numbers describe the positions of the crystal particles, such that the position of the $i$-th particle is $x_i$. The equation of motion for this particle is given by $$m\frac{d^2x_i}{dt^2}=x_{i-1}-2x_i+x_{i+1}-V'(x_i),$$ where $V:\R\to \R$ satisfying $V(\xi+1)=V(\xi)$, is the periodic background potential.

To investigate the equilibrium solutions of this model, we have to solve for all $i\in \Z$ the recurrence relation \begin{equation}\label{f-k}x_{i-1}-2x_i+x_{i+1}-V'(x_i)=0.\end{equation} In \cite{AubryLeDaeron}, Aubry and le Daeron studied a particular set of equilibrium solutions of this model, the so-called global minimizers, or ground states. Global minimizers are, in a sense, quite a natural choice of solutions, since they ``minimize" the formal energy function of the crystal. Aubry and le Daeron proved that there exist global minimizers with any prescribed average spacing between particles. These solutions satisfy the ``Birkhoff'' property and are uniformly close to linear sequences. 

A surprising result in \cite{AubryLeDaeron} is that in fact all global minimizers of (\ref{f-k}) are Birkhoff, and hence very regular. This is a consequence of Aubry's Lemma, or the single crossing principle, which states that any two global minimizers of the Frenkel-Kontorova model can cross only once. More precisely, for a global minimizer $x\in \R^\Z$, let us picture the piecewise linear graph connecting the points $(i,x_i)\in \R^2$ by line segments. This is called Aubry's graph of $x$. The statement of Aubry's lemma is that Aubry's graphs of two global minimizers can cross in at most one point. At roughly the same time, similar results were obtained by Mather (\cite{Mather82}) using quite a different mathematical approach, and in the quite different setting of Hamiltonian twist maps. The correspondence is explained in \cite[4.2]{AubryLeDaeron}. 

It is possible to generalize the existence result of Birkhoff global minimizers of any rotation number to more complicated models than (\ref{f-k}). One generalization is to the case where the crystal is more-dimensional. It has been shown by Blank in \cite{blank} that for higher dimensional crystal models with nearest neighbor interactions, Birkhoff global minimizers of any rotation vector exist. The case where a particle also interacts with particles that are not its nearest neighbors, was addressed first in \cite{llave-lattices}. An analogous theory for elliptic PDEs on a torus was developed by Moser in \cite{moser86} and for geodesics on a 2-torus by Bangert in \cite{bangert90}. However, as first observed by Blank in \cite{blank} and \cite{blank2}, in most of these cases there are also global minimizers that are not Birkhoff. 

In this paper, we restrict ourselves to one-dimensional crystal models, where a particle interacts via elastic forces also with particles that are not its nearest neighbors. Such models were first considered in \cite{angenent88}. We call such a setting a generalized Frenkel-Kontorova model, or a finite-range variational monotone recurrence relation. In this setting, it is clear that Birkhoff global minimizers of all rotation numbers exist (see for example \cite{llave-lattices}). However, the main difference between a generalized Frenkel-Kontorova model and the classical Frenkel-Kontorova model is that the single crossing property does not hold anymore in the more general setting. In particular, there is no result stating that all global minimizers are Birkhoff. In fact, as we will show in section \ref{linear}, already in the setting of a linear generalized Frenkel-Kontorova model without a background potential, non-Birkhoff global minimizers exist. Because of this, we find the question of classifying global minimizers for generalized Frenkel-Kontorova model of interest. 

We moreover restrict ourselves to ``Newtonian crystal models'', for which Newton's second law applies. I.e., we assume that the forces acting on a particle can be represented as a sum of elastic forces arising from attraction to close-by particles. We will show with a dichotomy theorem that non-Birkhoff global minimizers have to be ``wild'' and so relatively non-physical. In particular, Birkhoff minimizers cannot be approximated by non-Birkhoff minimizers. This implies that when one is looking for properties of ``natural'' global minimizers of the generalized Frenkel-Kontorova models, it makes sense to study only the set of Birkhoff global minimizers more precisely. 

In addition, we want to investigate ordering properties for Birkhoff global minimizers of the generalized Frenkel-Kontorova model. In the case of the classical Frenkel-Kontorova model, a lot is known about ordering properties of global minimizers. As mentioned above, it turns out that Aubry's Lemma in the setting of twist maps implies that all global minimizers are Birkhoff, in other words, ordered with respect to their translates. In fact, more is true. It holds that all global minimizers of a fixed irrational rotation number are ordered and a slightly weaker statement holds also for rational rotation numbers. This was first shown by Aubry in \cite{AubryLeDaeron} and a nice overview of these results can be found in \cite{MatherForni}. We prove equivalent results for Birkhoff global minimizers of the generalized Frenkel-Kontorova model in the appendix to this paper.

\subsection{Discussion: minimal foliations and laminations}

A theorem by Bangert in \cite{bangert87} applied to generalized Frenkel-Kontorova models shows the set of Birkhoff minimizers of a specific irrational rotation number is strictly ordered, and is either connected (a minimal foliation), or it is disconnected (a minimal lamination). For irrational rotation numbers laminations form Cantor sets and are usually referred to as Cantori. 

The question of when a foliation and when a lamination can be expected, has been studied extensively. A reason in the case of classical Frenkel-Kontorova model is that minimal foliations correspond to energy-transport barriers of the corresponding Hamiltonian twist map - the standard map. The case where the class of global minimizers forms a foliation arises for example in the classical Frenkel-Kontorova model when the background potential is absent. There, in fact, the class of global minimizers of any rotation number forms a foliation. Moreover, if the rotation number of an invariant circle is ``very irrational", the KAM-theory provides perturbation results that show that for small enough smooth perturbations, the foliations persist (see \cite{SalamonZehnder}). A review of these results can be found in \cite{MatherForni}.

On the other hand, the case of Cantor sets for the classical Frenkel-Kontorova model arises in numerous examples. For example, for any irrational rotation number, the construction of the set of global minimizers as a continuation from the anti-integrable limit gives a Cantor set - see \cite{MackayMeiss}. In the setting of the standard map, the conditions that force the class of global minimizers of any irrational rotation number from a fixed interval to be a Cantor set, have been precisely studied in \cite{percival}. In the case where the rotation number is Liouville (not ``very irrational"), Mather has proved a much stronger result. It states that the set of local energies that have Cantor sets is dense in the $C^k$ topology for any $k \in \N$ - see \cite{mather_criterion}, \cite{mather_modulus} and \cite{mather_destruction}. Moreover, the equivalent results in the analytic case are worked out in \cite{Forni}.

For generalized Frenkel-Kontorova crystal models, the study of minimal foliations and laminations corresponds to the physical effects referred to sliding and pinning, respectively. The gaps in foliations define regions where atoms of the crystal that constitute a Birkhoff minimal solution cannot be found. Also in this general case, laminations can be obtained by large ``bumps'' on the local potentials (see for example \cite{ghost_circles}). Moreover, Mather's destruction result for Liouville rotation numbers  \cite{mather_destruction} has been generalized to this case by the authors in \cite{destruction}. 

However, since the single crossing property does not hold in this general setting, there are global minimizers that are not Birkhoff. The dichotomy theorem in this paper implies that at least in the setting we are working in, it makes sense to study minimal laminations and foliations, because Birkhoff global minimizers cannot be approximated by non-Birkhoff global minimizers. 

\subsection{Observations for a linear crystal model}\label{linear}

The first obvious extension of the Frenkel-Kontorova crystal model from (\ref{f-k}), is to assume that the atoms also interact with the second-closest neighbors via linear attracting forces. In this case the recurrence relation becomes \begin{equation}\label{f-k1}(1-b)x_{i-2}+bx_{i-1}-2x_i+bx_{i+1}+(1-b)x_{i+2}-V'(x_i)=0,\end{equation}  for some constant $b\in[0,1]$ and (\ref{f-k}) corresponds to the case where $b=1$. We set $V(\xi)\equiv 0$. Then it is easy to see by a convexity argument that any solution of (\ref{f-k1}) is a minimizer. Observe that all the solutions of (\ref{f-k}) can be described as linear sequences defined by $x_i:= \nu \cdot i+x_0$ and it is easy to see that linear sequences also solve \begin{equation}\label{rr1}(1-b)x_{i-2}+bx_{i-1}-2x_i+bx_{i+1}+(1-b)x_{i+2}=0\end{equation} for any $b\in[0,1)$. 

However, there are other solutions that we find by computing the general solutions of (\ref{rr1}), with the ansatz $x_i=c^i$ for some $c \in \mathbb{C}$. The equation we have to solve becomes $$(1-b)(c+c^{-1})^2+b(c+c^{-1})-4+2b=((1-b)(c+c^{-1})+2-b)(c+c^{-1}-2)=0.$$ This leads to the equations: $c+c^{-1}=2$ and $c+c^{-1}=-\frac{2-b}{1-b}$. The first equation has a double root in $c=1$, so it gives us the linear solutions. The second equation, in case $b\in (0,1)$, is solved by $$c_{0,1}=\frac{b-2\pm \sqrt{b(4-3b)}}{2(1-b)}$$ where $c_1=c_0^{-1}$. It follows that $c_{0}\in \R$, $c_0<0$ and $c_0^{-1}<0$. Then any solution $x$ of (\ref{rr1}) can be written as $x_i=k_0+k_1i+k_2c_0^i+k_3c_0^{-i}$. This implies that any global minimizer of (\ref{rr1}), where $b\in (0,1)$, is either linear, and in particular very regular, or exponentially growing and oscillating, and as such relatively non-physical. We will prove equivalent statements that reflect this duality in a much more general nonlinear setting. 

In case $b=0$, the equation $c+c^{-1}=-\frac{2-b}{1-b}$ has a double root in $c=-1$, so it gives the general solution $x$ by $x_i=k_0+k_1i+k_2(-1)^i+k_3(-1)^ii.$ Obviously, non-linear global minimizers in this case do not exhibit exponential growth. We will make assumptions on our model that exclude this degenerate uncoupled case.

\subsection{Setting}\label{setting}

In this section we introduce our notation and quote some standard results from Aubry-Mather Theory. 

As mentioned in the introduction, we are interested in monotone recurrence relations for which we assume that the particles obey Newton's second law of motion. More precisely, the force acting on a particular particle $x_i$ comprises of a local force arising from a background potential $V(x_i)$ and an interaction force that can be written as a sum of forces $\sum_jF_{i,j}$, such that $F_{i,j}$ corresponds to an elastic force generated by a nearby particles $x_j$. Moreover, we assume that the elastic forces are generated by potentials, which allows for a variational approach. This induces the following formal setup.

The underlying space for the variational principle is the space of real-valued sequences. Let $1\leq r\in \N$ be a natural number that represents the range of interaction between particles. Consider a $C^2$ function $S:\R^{r+1}\to \R$. For every sequence $x\in \R^\Z$ and for every $j\in \Z$ define the function $S_j(x):=S(x_{j},...,x_{j+r})$. We look for sequences $x$ that solve the following recurrence relations: \begin{equation}\label{rr}\sum_{j=i- r}^{i}\p_i S_j(x)=0, \ \ \forall i \in \Z.\end{equation} This is equivalent to finding solutions to the variational problem on the formal sum $$W(x)=\sum_{i\in \Z} S_i(x),$$ or solving the variational recurrence relation \begin{equation}\label{var_principle}\nabla W(x)=(\p_i W(x))_{i\in\Z}=\left(\sum_{j=i- r}^{i}\p_i S_j(x)\right)_{i\in\Z}\equiv 0.\end{equation} The formal potential $W$ corresponds to Newtonian variational monotone recurrence relations, if $S$ satisfies the definition of a ``local energy'', stated below.

\begin{definition}\label{S}
Let $1\leq r\in \N$ represent the range of interaction. We call a function $S\in C^2(R^{r+1})$ a local energy, if for $1\leq j \leq r$ there exist functions $f_j \in C^2(\R^2)$ such that $$S(\xi_1,...,\xi_{r+1})=\sum_{j=1}^r f_j(\xi_1,\xi_j)$$ and such that for every $1\leq j\leq r$, $f_j$ satisfies;
\begin{enumerate}
	 \item periodicity: $f_j(\nu+1,\mu+1)=f_j(\nu,\mu)$,
	\item uniform bound on the second derivatives: for all $i,k\in \{1,2\}$, there exists a constant $K>0$ such that $\|\p_{i,k}f_j\|_{\sup} \leq \frac{K}{r}$,
	\item coercivity: $f_j(\nu,\mu)\to \infty \text{ if } |\nu-\mu| \to \infty,$
	\item strong twist (monotonicity): there exists a $\lambda>0$ such that
	\begin{equation}\label{strong twist1} \p_1\p_2f_j(\nu,\mu) \leq -\lambda<0, \text{ for all } \nu,\mu \in \R. \end{equation}
\end{enumerate}
\end{definition}

\begin{remark}\label{remarkS}
Note that the conditions 1-4 in the definition \ref{S} imply that the local energies $S_i$ satisfy the following conditions;
\begin{enumerate}
	 \item periodicity: $S_i(x_{i}+1,...,x_{i+r}+1)=S_i(x_i,...,x_{i+r})$,
	\item uniform bound on the second derivatives: $\max\{j,k\in \Z \ | \  \|\p_{j,k}S_i\|_{\sup}\} \leq K$,
	\item coercivity: $S_i(x_{i},...,x_{i+r})\to \infty \text{ if } \sup_{i\leq j\leq i+r} |x_i-x_j| \to \infty,$
	\item strong twist (monotonicity): 
	\begin{equation}\begin{aligned}\label{strong twist} \p_i\p_j S_i(x)\leq-\lambda<0,& \text{ for all } j\in \{i+1,...,i+r\}, \ \text{ and }\\  \ \p_j\p_k S_i(x)\equiv 0, &\text{ if } j\neq i \text{ and }  k \neq i \text{ and } j\neq k.\end{aligned}\end{equation}
\end{enumerate}
\end{remark}

\begin{remark} To motivate these conditions, we explain what form the local energy for Frenkel-Kontorova models takes. By defining \begin{equation}\label{range1}S_i(x):=\frac{1}{2}(x_i-x_{i+1})^2 + V(x_i),\end{equation} where $V:\R \to \R$ is a real periodic $C^2$ function, (\ref{rr}) corresponds to (\ref{f-k}). Obviously, $S$ above satisfies all the conditions from Definition \ref{S}. The local energy corresponding to (\ref{f-k1}) is defined by \begin{equation}\label{range2}S_i(x):=\frac{b}{2}(x_i-x_{i+1})^2+\frac{1-b}{2}(x_i-x_{i+2})^2 + V(x_i).\end{equation} and again satisfies all of the conditions from Definition \ref{S}. Generalizing this model to the case where the forces are allowed to have non-linear dependence on the distance and to the case where the range of forces is arbitrary but finite, gives a general local energy from Definition \ref{S}. 
\end{remark}

Let us set some more notation. By $B=[i_0- r,i_1]$ we will denote an arbitrary finite segment of $\Z$ with $i_1-i_0\geq 0$. Next, denote by $\mathring{B}=[i_0,i_1]$ the interior of $B$ and by $\bar B:=[i_0- r,i_1+ r]$ its closure. Then we can define the boundary of $B$ by $\p B=\bar B\backslash \mathring{B}$ so that $\p B:=\p B_- \cup \p B_+$ and $\p B_-:=[i_0- r,i_0-1]$, $\p B_+:=[i_1+1,i_1+ r]$. 

We define $$W_B(x):=\sum_{i \in B}S_i(x)$$ which is a function of coordinates of $x$ with indices in $\bar B$, i.e. $x_{i_0- r},...,x_{i_1+ r}$. Observe that for any $i \in \mathring{B}$ it holds that $\p_i W_B(x)=\sum_{j=i-r}^{i}\p_i S_j(x)$. Hence, $x$ is a solution of (\ref{rr}), if and only if it is an equilibrium point for $W_B$, with respect to variations with support in $\mathring{B}$, for an arbitrary domain $B \subset \Z$. 

A strong condition that ensures that a sequence solves (\ref{rr}), is the following. 

\begin{definition}\label{glob_minimizer}A sequence $x$ is called a \textit{global minimizer}, if for all $B$ as above and all $v$ such that $supp(v) \subset \mathring{B}$ it holds that $W_B(x) \leq W_B(x+v).$ We denote the set of all global minimizers by $\M$. \end{definition}

Definition \ref{glob_minimizer} implies that global minimizers minimize an energy function with respect to compactly supported variations. In this sense, they are quite natural solutions for the problem (\ref{rr}). They are also the only solutions we are interested in for this paper.

The following definitions also prove useful. First, for every $k,l\in \Z$, define the translation operator \begin{equation}\label{translations}\tau_{k,l}:\R^\Z\to \R^\Z \ \text{ by } \ (\tau_{k,l}x)_i:=x_{i-k}+l.\end{equation} Moreover, we use the following notation for ordered sequences $x$ and $y$; 
\begin{itemize} 
	\item $x\leq y$: if for all $i\in \Z$, $x_i\leq y_i$,
	\item $x<y$: if for all $i\in \Z$, $x_i\leq y_i$ and $x\neq y$, (weak ordering)
	\item $x\ll y$: if for all $i\in \Z$, $x_i<y_i$ (strong ordering).
\end{itemize} 

Most of this paper is concerned with crossings of global minimizers. Let us make this more precise. Recall that we say that two sequences cross, if their Aubry graphs cross. To specify the domain in which crossings of sequences occur, we introduce the following definition.

\begin{definition}\label{domain of crossing} For sequences $x,y$, we call $D\subset \Z$ the domain of crossing of $x$ and $y$, if $D$ is an interval in $\Z$, i.e. $D=\varnothing$, $D=[j_0,j_1]$, $D=[j_0,\infty)$, $D=(-\infty,j_1]$ or $D=\Z$, and if the following holds. $D$ is the minimal interval such that $x<y$ or $y<x$ on $(-\infty,j_0]$ and that $x<y$ or $y<x$ on $[j_1,\infty)$. 

In other words, $x$ and $y$ are weakly ordered on all (at most both) ``connected'' components of the complement of $D$, but the ordering does not have to be the same on these components. \end{definition}

\subsection{Existence of global minimizers}\label{standard results}

In this section we give a brief sketch of how global minimizers are constructed when the local energy $S$ satisfies Definition \ref{S}. For more precise proofs we refer to \cite{llave-valdinoci}, or \cite{ghost_circles}.

The definition of translation in (\ref{translations}) allows us to define, for fixed integers $p,q \in \Z$, the set of $p$-$q$-periodic sequences by $$\X_{p,q}:=\{x\in \R^\Z \ | \ \tau_{p,q}x=x\}.$$ Since $\X_{p,q}$ is isomorphic to $\R^p$ and $S$ satisfies the periodicity condition from Definition \ref{S}, the formal action $W$ in the variational principle (\ref{var_principle}) can be replaced by the periodic action $W_{p,q}:=\sum_{i=1}^{p}S_i$ on $\X_{p,q}$. It is not difficult to show that the coercivity condition from Definition \ref{S} implies the existence of $p$-$q$-periodic sequences that minimize $W_{p,q}$. These sequences are called $p$-$q$-minimizers and they are solutions of (\ref{rr}). We denote the set of $p$-$q$-minimizers by $\M_{p,q}$. 

It turns out that periodic minimizers satisfy the following strong ordering properties. It follows by Aubry's lemma, applied in the setting of periodic sequences, that because of the twist condition (\ref{strong twist}), $p$-$q$-minimizers $x\neq y$ have to satisfy $x\ll y$ or $y\ll x$ (see for example Lemma 4.5 in \cite{ghost_circles}). Observe that for any $k,l \in \Z$, $\X_{p,q}$ is $\tau_{k,l}$ invariant and that also $W_{p,q}$ is $\tau_{k,l}$ invariant. In particular, it holds for every $x\in \M_{p,q}$ and every $k,l \in \Z$ that $\tau_{k,l}x\gg x$ or $\tau_{k,l}x\ll x$. This is the reason why periodic minimizers satisfy the well known \textit{Birkhoff property:} \begin{equation}\label{Birkhoff_def}\tau_{k,l}x \leq x \ \text{ or } \ \tau_{k,l}x \geq x \hspace{0.5cm} \text{holds for all } (k,l) \in \Z\times\Z.\end{equation} Every sequence $x$ with the Birkhoff property is called a \textit{Birkhoff sequence} and we denote the set of all Birkhoff sequences by $\B$. 

Furthermore, we denote the $p$-$q$-periodic Birkhoff sequences by $\B_{p,q}:=\B\cap \X_{p,q}$ and the set of Birkhoff global minimizers by $\BM:=\M\cap \B$. It can be shown that, because $p$-$q$-periodic minimizers are Birkhoff, they are also global minimizers, so that $\M_{p,q}\subset \M \cap \X_{p,q}$. In fact, also the inclusion in the other direction holds, so that $\M_{p,q}=\M \cap \X_{p,q}$. Proofs of the statements above can be found in $\S 4$ \cite{ghost_circles}.

Next, we recall some properties of Birkhoff sequences in general. It is well known that Birkhoff sequences have a \textit{rotation number} $$\rho(x):=\lim_{n\to \pm \infty}\frac{x_n}{n}$$ and that they satisfy the uniform estimate \begin{equation}\label{rotation number estimate}|x_n-x_0-\rho(x)n|\leq 1 \ \text{ for all } n\in \Z \end{equation} (see $\S 9$ \cite{gole01}). Denote $\B_\nu:=\{x \in \B \ | \ \rho(x)=\nu\}$ and $\BM_\nu:=\B_\nu \cap \M$ and observe that for any $x\in \B_{p,q}$, $\rho(x)=\frac{q}{p}$. As discussed above, $p$-$q$-periodic Birkhoff minimizers of every period exist, so $\BM_{q/p}\neq \varnothing$. The uniform estimate (\ref{rotation number estimate}) and the Birkhoff property (\ref{Birkhoff_def}), together with definition \ref{glob_minimizer}, shows that $\BM$ is compact with respect to point-wise convergence. This implies that we can take limits of periodic minimizers and get global minimizers of any irrational rotation number. We state this result, first published in \cite{AubryLeDaeron}, in the following theorem.

\begin{theorem}[Existence of Birkhoff global minimizers]
For any local energy $S$ that satisfies Definition \ref{S} and any rotation number $\nu\in \R$, there are Birkhoff global minimizers with rotation number $\nu$, i.e. $\BM_\nu \neq \varnothing$. 
\end{theorem}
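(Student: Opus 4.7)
The strategy, sketched in the paragraph preceding the theorem, is to handle rational and irrational rotation numbers separately: rational ones by a finite-dimensional minimization, and irrational ones by passing to a pointwise limit of rational approximants.

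For rational $\nu=q/p$, I would restrict the formal variational principle to the translation-invariant subspace $\X_{p,q}\simeq \R^p$, where the action reduces to the finite sum $W_{p,q}(x)=\sum_{i=1}^{p}S_i(x)$. The periodicity of $S$ makes $W_{p,q}$ invariant under $\tau_{0,1}$, which lets me fix, say, $x_0\in[0,1]$; the coercivity of $S$ then forces $W_{p,q}$ to attain its infimum on this slice, producing a sequence $x\in \M_{p,q}$. As recalled in the text, strong twist and Aubry's lemma in the periodic setting imply that every element of $\M_{p,q}$ is Birkhoff, and the equality $\M\cap \X_{p,q}=\M_{p,q}$ gives $x\in \BM_{q/p}$.

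For irrational $\nu$, I would choose a sequence of rationals $q_n/p_n\to\nu$ and, using the rational case, a corresponding sequence $x^{(n)}\in \BM_{q_n/p_n}$. Since $\BM$ is $\tau_{0,1}$-invariant, I may normalize so $x^{(n)}_0\in[0,1)$. The Birkhoff property together with the uniform rotation-number estimate (\ref{rotation number estimate}) gives $|x^{(n)}_k - (q_n/p_n)\,k|\leq 1$, so the values $x^{(n)}_k$ are bounded at each index $k$. A Cantor diagonal extraction then produces a subsequence converging pointwise to some $x^\ast\in\R^\Z$, and the Birkhoff inequalities $\tau_{k,l}x^{(n)}\leq x^{(n)}$ (or the reverse) pass to the limit, while (\ref{rotation number estimate}) gives $\rho(x^\ast)=\nu$.

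The main obstacle is verifying that $x^\ast$ is itself a global minimizer, i.e.\ that $\M$ is closed under pointwise convergence. For any finite $B$ and any variation $v$ with $\mathrm{supp}(v)\subset \mathring{B}$, the inequality $W_B(x^{(n)})\leq W_B(x^{(n)}+v)$ holds for every $n$ directly from $x^{(n)}\in \M$, without any periodicity restriction on $v$. Since $W_B$ depends only on the finitely many coordinates indexed by $\bar B$ and $S$ is continuous, the inequality passes to the pointwise limit, giving $W_B(x^\ast)\leq W_B(x^\ast+v)$ for all admissible $v$, hence $x^\ast\in \M$. This establishes $x^\ast\in\BM_\nu$ and completes the proof.
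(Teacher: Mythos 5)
Your proposal follows the same two-stage construction the paper itself sketches in Section 1.4: minimize the periodic action $W_{p,q}$ on $\X_{p,q}$ (using coercivity, after normalizing $x_0$ by $\tau_{0,1}$-invariance) to get $\M_{p,q}\subset\BM_{q/p}$, then pass to a pointwise limit along $q_n/p_n\to\nu$ using the uniform estimate (\ref{rotation number estimate}), the closedness of the Birkhoff order relations, and the fact that the global-minimizer inequality $W_B(x)\leq W_B(x+v)$ is a closed condition in the finitely many coordinates of $\bar B$. The only point worth tightening is the passage of the Birkhoff property: for a fixed $(k,l)$ the direction of $\tau_{k,l}x^{(n)}\lessgtr x^{(n)}$ may depend on $n$, so one should extract a further (diagonal) subsequence in which, for each $(k,l)$, one sign holds eventually; with that small addition the argument is complete and matches the paper's route.
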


\subsection{Outline of the paper and statement of the results} \label{outline}

In Section \ref{section preliminaries} we assemble all the tools needed for the proofs of Theorem A and Theorem B, after giving an intuitive explanation of the ideas behind these proofs. Section \ref{section bounded} contains the proof of Theorem A, stated below. Recall the definition \ref{domain of crossing} of the domain of crossing for sequences $x$ and $y$. 
\begin{theorema} Let $x,y\in \M$ and assume that for the domain of crossing $D$ of $x$ and $y$ the following holds: $D\neq \varnothing$ and $|D|<\infty$. Then $|D|<\tilde K$, where the constant $\tilde K$ depends only on the range of interaction $r$ and the uniform constants $\lambda$ and $K$ from Definition \ref{S}. \end{theorema}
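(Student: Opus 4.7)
The plan is to study $u := y - x$ on $D$ using the minimizing property combined with the strong twist, via a min/max comparison. The argument splits naturally into two cases based on how $x$ and $y$ are ordered on the two unbounded components of $\Z \setminus D$.

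\textbf{Case A (same ordering outside $D$).} Assume without loss of generality $x \leq y$ on $(-\infty, j_0] \cup [j_1, \infty)$. Setting $w := \min(x, y)$ and $z := \max(x, y)$ coordinatewise, the variations $w - x$ and $z - y$ are supported in $D$, so Definition \ref{glob_minimizer} applied on any $B$ with $D \subset \mathring{B}$ gives
\[
W_B(x) + W_B(y) \leq W_B(w) + W_B(z).
\]
The pairwise identity $f_j(a,b) + f_j(c,d) - f_j(a,d) - f_j(c,b) = \int_a^c \int_b^d \partial_1 \partial_2 f_j(s,t)\, dt\, ds$, combined with $\partial_1 \partial_2 f_j \leq -\lambda$, gives on the other hand
\[
W_B(w) + W_B(z) - W_B(x) - W_B(y) \leq -\lambda \sum_{\substack{1 \leq j \leq r \\ (y_i - x_i)(y_{i+j} - x_{i+j}) < 0}} |u_i|\,|u_{i+j}|.
\]
Combining the two inequalities forces the ``crossing sum'' to vanish, so every strictly crossed pair within distance $r$ must have $u = 0$ at some endpoint. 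Consequently, adjacent opposite-sign regions of $u$ in $D$ must be separated by zero regions of length $\geq r$. Moreover, the bound $\partial_1 \partial_2 f_r \leq -\lambda$ makes (\ref{rr}) an invertible one-step recurrence in $x_{i+r}$ given $x_{i-r}, \ldots, x_{i+r-1}$, so $2r$ consecutive equalities between $x$ and $y$ force $x \equiv y$; hence each zero region has length in $[r, 2r-1]$.

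What remains in Case A is to bound the number $N$ of sign changes of $u$ on $D$ by a constant depending only on $r, \lambda, K$, which combined with the $[r, 2r-1]$ bound on each zero region and a similar control on strict-sign regions yields $|D| < \tilde{K}_A(r, \lambda, K)$. For this I would linearize: the difference $u$ satisfies $Lu = 0$ with $L := \int_0^1 D^2 W(x + t u)\, dt$, a symmetric $(2r+1)$-banded operator whose off-diagonal entries lie in $[-K, -\lambda]$, and invoke a discrete Sturm-type oscillation estimate for such ``elliptic-like'' banded operators. For $r = 1$ this collapses to Aubry's lemma ($N = 0$, i.e.\ no crossings at all); for general $r$, quantitative control of the oscillation by the ellipticity ratio $\lambda/K$ is what produces the explicit form of $\tilde K$. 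This Sturm-type estimate is the main technical obstacle of the proof.

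\textbf{Case B (opposite orderings across $D$).} Here the min/max variations are no longer compactly supported, so the direct comparison fails. The plan is to interpose a Birkhoff minimizer $\xi$ of a suitable rotation number---which exists by the theorem quoted in Section \ref{standard results}---chosen so that each of the pairs $(x, \xi)$ and $(\xi, y)$ falls into Case A; applying the Case A bound to each of them and piecing the two estimates together bounds $|D|$. The secondary obstacle is selecting $\xi$ (and its rotation number) so that both derived crossing domains are finite and their lengths combine to $\tilde K(r, \lambda, K)$; this relies on the Birkhoff estimate (\ref{rotation number estimate}) to localize $\xi$ relative to $x$ and $y$ near $\partial D$.
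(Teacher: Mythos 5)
Your case analysis is inverted, and this is a fatal gap. In ``Case A'' (same ordering of $x,y$ on both unbounded components of $\Z\setminus D$), Lemma~\ref{even_crossings} applies directly with $k_0=j_0$, $k_1=j_1$, $i_0=j_0-r$, $i_1=j_1+r$: since $x\leq y$ on $[j_0-r,j_0-1]\cup[j_1+1,j_1+r]$, the lemma gives $x\ll y$ on $[j_0,j_1]$, so $x$ and $y$ are ordered on all of $\Z$ and the minimal crossing domain is $D=\varnothing$ (this is exactly Corollary~\ref{crosses} in the paper). Case~A therefore cannot occur under the hypothesis $D\neq\varnothing$, and your entire Sturm-type oscillation machinery --- which you yourself flag as the ``main technical obstacle'' --- is aimed at a vacuous situation. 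Your own first computation already shows this: once the crossing sum vanishes, there is no strict crossing within any $r$-window, and propagating that inward via Lemma~\ref{even_crossings} forces $D=\varnothing$ outright; there is no $N\geq 1$ to bound.

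Conversely, ``Case B'' (opposite orderings across $D$) is not an edge case to be patched away --- it is the \emph{entire} content of Theorem~A. Your objection that ``the min/max variations are no longer compactly supported'' correctly identifies why the naive comparison fails, but the proposed fix does not work and is not what the paper does. Interposing a Birkhoff $\xi$ cannot reduce to Case~A: if both $D(x,\xi)$ and $D(\xi,y)$ were to fall in ``Case A'' they would be empty, forcing $x,\xi,y$ globally ordered and in particular $x\leq y$ or $x\geq y$ on all of $\Z$, contradicting the opposite orderings across $D$; otherwise at least one of the two derived pairs is again in Case~B and nothing has been gained. The paper instead stays with the original pair and replaces $M,m$ by their \emph{truncations} $M^B,m^B$ of Definition~\ref{min-max-variations}, which \emph{are} compactly supported variations of $x$ or $y$ for any finite $B$. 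This produces the comparison $W_B^c(x,y)\leq W_B^c(M^B(x),m^B(y))$ of (\ref{gp}), whose right-hand side is a \emph{boundary} energy concentrated near $\partial B$; the technical heart of the proof (Proposition~\ref{boundary energy}, Lemma~\ref{local_oscillations}, Lemma~\ref{double term}, Corollary~\ref{swapping}) is estimating these boundary terms by finitely many $\alpha_i\beta_j$ products and comparing them to the bulk crossing energy, which is where the explicit $\tilde K$ in terms of $r,\lambda,K$ comes from. None of this machinery, and in particular no discrete Sturm oscillation theory, appears in your proposal.
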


In other words, we show that if the domain of crossing for two global minimizers $x$ and $y$ is bounded, then its size is smaller than some uniform constant $\tilde K$, independent of $x$ and $y$.

In Section \ref{section unbounded domain} we push the idea of the proof of Theorem A, to get the following result.
\begin{theoremb}
Assume that the domain of crossing $D$ for $x,y\in \M$ is infinite. Then there is a constant $d \in \N$ that depends only on the range of interaction $r$ and the uniform constants $\lambda$ and $K$ from Definition \ref{S}, such that the following holds. There exist monotone sequences $k_n,l_n \in D$ with $|k_{n+1}-k_n|\leq d$ and $|l_n-k_n|\leq r$ so that $$\text{ for all } n, \  x_{k_n}>y_{k_n}, \ x_{l_n}<y_{l_n}\ \text{ and }  \ (x_{k_n}-y_{k_n})(y_{l_n}-x_{l_n})\geq 2^n.$$
\end{theoremb}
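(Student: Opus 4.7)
The approach is to iterate along $D$ an Aubry-type cut-and-paste argument that presumably underlies Theorem~A. After reversing the index $i \mapsto -i$ and swapping $x \leftrightarrow y$ if necessary, assume $D$ extends to $+\infty$ with $y > x$ on its left complement. Since $D$ is the domain of crossing, $y-x$ switches sign infinitely often as $i \to \infty$ in $D$. I would first show, as a consequence of Theorem~A, that any contiguous one-signed excursion of $y-x$ inside $D$ has length at most some uniform $d_0 = d_0(r,\lambda,K)$: isolating a longer excursion and modifying $x, y$ by swaps outside it would produce two global minimizers whose domain of crossing is finite but oversized, contradicting Theorem~A. With this, within each of the resulting ``sign clusters'' choose adjacent-in-range indices $k_n < l_n$ with $|l_n - k_n| \leq r$, $y_{k_n} > x_{k_n}$, $y_{l_n} < x_{l_n}$, and arrange $k_n$ monotone with $|k_{n+1} - k_n| \leq d$ for some $d = d(r,\lambda,K)$. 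The theorem then reduces to the exponential lower bound on $P_n := (x_{k_n}-y_{k_n})(y_{l_n}-x_{l_n})$.

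\textbf{Doubling via a swap.} For the growth, I would pick cut points $a_n < b_n$ in ``quiet'' gaps flanking the $n$-th sign cluster, let $\tilde x$ agree with $x$ outside $[a_n, b_n]$ and with $y$ inside, and let $\tilde y$ be the symmetric swap. Both are admissible compactly-supported variations, so minimality of $x$ and $y$ gives
\begin{equation*}
0 \;\leq\; \bigl(W(\tilde x) - W(x)\bigr) + \bigl(W(\tilde y) - W(y)\bigr) \;=\; \sum_{(i,j) \text{ straddling } a_n \text{ or } b_n} \Delta_{i,j},
\end{equation*}
where $\Delta_{i,j} = -\int_{x_i}^{y_i}\!\int_{x_{i+j}}^{y_{i+j}} \partial_1\partial_2 f_j$. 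The strong twist bounds each $\Delta_{i,j}$ below by $\lambda(y_i - x_i)(y_{i+j} - x_{i+j})$ when the two factors share a sign and above by $-\lambda|y_i - x_i||y_{i+j} - x_{i+j}|$ when they do not. Because $a_n, b_n$ lie in sub-intervals where $y - x$ is single-signed, the mismatched-sign contributions localize onto the pairs at the two clusters bounding $[a_n, b_n]$, while the aligned contributions are controlled by $K$. Rearranging yields an inequality of the form $c_1 P_{n+1} \geq c_2 P_n$ with $c_1, c_2$ depending only on $\lambda, K, r$. Passing to a subsequence of clusters (absorbed into $d$) converts the ratio $c_2/c_1 > 1$ into a factor of exactly $2$, giving $P_n \geq 2^n$.

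\textbf{Main obstacle.} The delicate point is the sign bookkeeping when $r > 1$: each cut is straddled by $O(r^2)$ bonds of mixed signs, and the negative and positive contributions must be cleanly separated. This is resolved by exploiting the uniform bound on sign-excursion lengths from the first step to place the cuts in narrow quiet intervals where $y - x$ is single-signed; the mismatched contributions then localize onto the intended clusters, while the sign-aligned remainder is bounded by $K$ times the controlled excursion widths. Achieving a clean factor $2$ rather than just some $c_2/c_1 > 1$ amplification is where the constant $d$ absorbs both the thinning to a subsequence of clusters and the window widths from Theorem~A.
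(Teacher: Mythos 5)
Your overall strategy is in the right family — an Aubry cut‑and‑paste (which coincides here with the paper's ``general principle'' $W_B^c(x,y)\le W_B^c(M^B(x),m^B(y))$) combined with the observation that the minimum‑maximum principle makes the interior crossing energy accumulate, so that boundary terms must grow.  However, there are two genuine gaps.

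First, the doubling step is not set up correctly.  You propose a local swap on a window $[a_n,b_n]$ flanking the $n$-th cluster, with $a_n,b_n$ placed in quiet single‑signed intervals.  That produces an inequality relating boundary bonds near $a_n$ and near $b_n$ to interior contributions inside $[a_n,b_n]$ — but that interior contains essentially just the $n$-th cluster.  Nothing forces the term at $b_n$ to be larger than the term at $a_n$, so you do not get a recurrence $c_1 P_{n+1}\ge c_2 P_n$: a local, symmetric window gives a local, symmetric estimate, not monotone amplification.  What the paper does instead is to take a sequence of \emph{growing, one‑sided} domains $B^m=[k_0-r,j_m]$ whose left endpoint is fixed at the edge of $D$, where by construction $\alpha\equiv 0$ and hence the left boundary energy \emph{vanishes identically}.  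Then the right boundary term $E^+_{j_m}$ alone must dominate the full interior sum $\sum_{n\le N} E^+_{j_n}$, and after choosing the minimal term $E^+_{j_{n_1}}$ among the first block one gets $E^+_{j_m}\ge 6r^2 E^+_{j_{n_1}}$ for all $m>N$; iterating block by block yields geometric growth.  That asymmetry (one vanishing boundary, one moving boundary that eats an unbounded interior) is the engine of the proof, and your construction does not reproduce it.  Relatedly, you have implicitly reduced to a one‑sided $D=[j_0,\infty)$; the case $D=\mathbb Z$ needs a separate two‑ended argument with two moving boundaries and a case analysis of which side grows, which is Theorem B2 in the paper.

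Second, your treatment of the ``sign‑aligned'' bonds is too coarse.  After the hybrid swap, the straddling bonds of aligned sign (the $\alpha_i\alpha_j$ and $\beta_i\beta_j$ terms) are not just ``bounded by $K$'' — they must be compared \emph{against the mixed terms that carry the growth}, or they can spoil the inequality.  The paper dedicates Lemma \ref{local_oscillations} (a discrete Harnack inequality obtained from the recurrence relation and strong twist) and Lemma \ref{double term} precisely to convert these double terms into mixed $\alpha_i\beta_j$ terms with a controlled constant $c=2K^2(2r+1)/\lambda$.  Without such a conversion, ``rearranging'' the sign‑aligned contributions does not close the argument.  Finally, a small inaccuracy: the uniform bound on one‑signed excursion lengths is a consequence of the local comparison result (Lemma \ref{even_crossings}, repackaged as Corollary \ref{dense_crossings} and Proposition \ref{infinite_oscillations}), not of Theorem A; Theorem A bounds a finite crossing domain globally and does not directly control excursions inside an infinite $D$.
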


This theorem is the counterpart of Theorem A. It says that if the domain of intersection for global minimizers $x$ and $y$ is infinite, then $x-y$ behaves very wildly in some specific sense. In fact, a monotone subsequence of the sequence $x-y$ grows exponentially and changes sign.

In Section \ref{section birkhoff} we compare global minimizers to their translates and apply Theorem A and Theorem B. This results in the following:
 
\begin{theoremd}
For every global minimizer $x\in \M$ one of the following two cases is true: 

\begin{itemize}
	\item It holds that $x$ is a Birkhoff global minimizer and thus very regular.
	\item It holds that $x$ is not a Birkhoff global minimizer. Then $x$ is very irregular in the following sense. There are monotone infinite sequences $\{k_n, l_n\}\in \Z$, with $|k_{n+1}-k_{n}|\leq d$, $|l_n-k_n|\leq r$ such that one of the following inequalities holds for all $n\in \N$: \begin{align*}%(x_{k_n+1}-x_{k_n})(x_{l_n}-x_{l_n+1})&\geq 2^n, \\ 
	(x_{k_n+1}-x_{k_n} + 1)(x_{l_n}-x_{l_n+1}+ 1)&\geq 2^n, \text{ or } \\ (x_{k_n+1}-x_{k_n} - 1)(x_{l_n}-x_{l_n+1}- 1)&\geq 2^n.\end{align*} In particular, for every $n$ one of the following must hold: $$x_{k_n+1}-x_{k_n}\geq 2^{n/2}-1, \ \text{ or } \ x_{l_n}-x_{l_n+1}\geq 2^{n/2}-1.$$
\end{itemize}
\end{theoremd}

A global minimizer is thus either very regular and ``almost linear'', or it is oscillating and exponentially growing. 

\vspace{0.3cm}
\noindent
\textbf{Appendix:}\\
 For global minimizers of twist maps, it is not only known that they are Birkhoff, but also that they exhibit some stronger ordering properties (see \cite{MatherForni}). We develop the equivalent theory for our setting in Appendix \ref{section ordered minimizers}. We compare arbitrary Birkhoff global minimizers of the same rotation number. We work in the space of Birkhoff global minimizers $\BM$ and assume that a weaker twist condition holds, making the statements slightly more general. We write the collection of Birkhoff global minimizers as the following union $$\BM:=\bigcup_{\nu \in\R\backslash \Q  }\BM_\nu\cup \bigcup_{q/p \in \Q} \BM_{q/p}^+ \cup \BM_{q/p}^-,$$ defined by: 
\begin{itemize} 
	\item for $\nu\in \R\backslash \Q$, $\BM_\nu:=\{x\in \M\cap \B_\nu\}$,
	\item for $p,q\in \Z$, $\BM_{q/p}^+:=\{x\in \M\cap \B_{q/p}\ | \ \tau_{p,q}x\geq x\}$ and
	\item for $p,q\in \Z$, $\BM_{q/p}^-:=\{x\in \M\cap \B_{q/p}\ | \ \tau_{p,q}x\leq x\}$.
\end{itemize}
Using the ideas from the classical Aubry-Mather Theory for twist maps, we will show that each of the sets $\BM_\nu$, $\BM_{q/p}^+$ and $\BM_{q/p}^-$ is ordered. Moreover, we show that whenever there is a gap $[x^-,x^+]$ in $\M_{p,q}=\BM^+_{q/p}\cap \BM^-_{q/p}$, then it contains heteroclinic connections in $\BM^+_{q/p}\backslash \M_{p,q}$ and in $\BM^-_{q/p}\backslash \M_{p,q}$, connecting $x^-$ and $x^+$.

\section{Preliminaries}\label{section preliminaries}

\subsection{Minimum-maximum principle}\label{section min-max}

In this section, we explain some basic results that are the main tools for the rest of this paper. In particular, we derive the so-called minimum-maximum principle, strong comparison principle and an analogue of Aubry's lemma (Lemma \ref{even_crossings}), for the local energy $S$ as in Definition \ref{S}. We start with the following definition.

\begin{definition}\label{crossing energy}
For $x,y\in \R^\Z$, define $M$ and $m$ by $M_i:=\max\{x_i,y_i\}$ and $m_i:=\min\{x_i,y_i\}$. 

We call $W_B^c(x,y):=W_B(y)-W_B(m)-W_B(M)+W_B(x)$ \textit{the crossing energy of $x$ and $y$ on $B$.}
\end{definition}

To compute the crossing energy of $x$ and $y$, we use the idea from \cite{llave-valdinoci}, that allows us to generalize the so-called minimum-maximum principle from classical Aubry-Mather Theory to our setting. Define \begin{equation}\label{alpha-beta}\alpha_i:=\left\{ \begin{array}{lll} y_i-x_i & \mbox{if} & y_i-x_i>0,  \\ 0 & \mbox{else;}&\end{array} \right. \hspace{0.5cm} \beta_i:=\left\{ \begin{array}{lll} y_i-x_i & \mbox{if} & y_i-x_i<0,  \\ 0 & \mbox{else.}&\end{array} \right.\end{equation} Then it holds that $M=\max\{x,y\}=x+\alpha$, $m=\min\{x,y\}=x+\beta$ and $y=x+\alpha+\beta$. This allows us to prove the following.

\begin{lemma}[Minimum-maximum principle]\label{min-max lemma}For an arbitrary finite segment $B \subset \Z$ it holds that $W_B^c(x,y)\geq0$, i.e. $W_B(x)+W_B(y)\geq W_B(M)+W_B(m)$. \end{lemma}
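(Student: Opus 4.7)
The strategy is to prove the inequality one summand at a time: since $W_B=\sum_{i\in B}S_i$, it suffices to show
\[
S_i^c(x,y):=S_i(y)-S_i(M)-S_i(m)+S_i(x)\geq 0
\]
for every fixed $i\in B$, and then sum. Using the hint $M=x+\alpha$, $m=x+\beta$, $y=x+\alpha+\beta$ from (\ref{alpha-beta}), this per-index crossing energy takes the form of a discrete mixed second difference
\[
S_i^c=S_i(x+\alpha+\beta)-S_i(x+\alpha)-S_i(x+\beta)+S_i(x),
\]
which suggests rewriting it via Taylor / the fundamental theorem of calculus in the two ``directions'' $\alpha$ and $\beta$.

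Applying the FTC twice gives the integral representation
\[
S_i^c=\int_0^1\!\!\int_0^1 \sum_{j,k}\p_j\p_k S_i(x+s\alpha+t\beta)\,\alpha_j\beta_k\,ds\,dt,
\]
with $j,k$ ranging over the support $\{i,i+1,\dots,i+r\}$ of $S_i$. Now one invokes the two key structural properties. First, by Remark \ref{remarkS}(4) (the Newtonian assumption), $\p_j\p_k S_i\equiv 0$ whenever $j\ne k$ and both $j\ne i$, $k\ne i$. Second, since $\alpha$ and $\beta$ have disjoint supports, $\alpha_j\beta_j=0$ pointwise, killing the diagonal terms. Hence the only surviving contributions in the double sum are those with $\{j,k\}=\{i,\ell\}$ for some $\ell\in\{i+1,\dots,i+r\}$.

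For each such pair, the strong twist condition gives $\p_i\p_\ell S_i\le -\lambda<0$, while by construction of $\alpha\ge 0$ and $\beta\le 0$ we have $\alpha_i\beta_\ell\le 0$ and $\alpha_\ell\beta_i\le 0$. Every surviving integrand is therefore a product of a non-positive quantity with a negative quantity, hence non-negative. Integration and summation over $i\in B$ yields $W_B^c(x,y)\ge 0$, as desired. The argument has no real obstacle; what is worth stressing is that the Newtonian restriction in Definition \ref{S} is doing essential work: without it, generic cross partials $\p_j\p_k S_i$ with $j,k\ne i$ could have arbitrary sign and the identity would genuinely fail. The twist condition then only needs to pin down the sign of the remaining ``nearest-to-base'' partials, which precisely pair up with the mixed $\alpha$--$\beta$ terms that record where $x$ and $y$ swap order.
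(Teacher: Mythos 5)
Your proof is correct and follows the same route as the paper: the double-FTC interpolation producing $\int_0^1\int_0^1\sum_{j,k}\p_j\p_k S_i(x+s\alpha+t\beta)\,\alpha_j\beta_k\,ds\,dt$, the elimination of diagonal terms via disjoint supports and of off-diagonal $j,k\ne i$ terms via the second part of the strong twist condition in (\ref{strong twist}), and the sign count $(\le 0)\cdot(\text{negative})\ge 0$. The only cosmetic difference is that you argue summand-by-summand $S_i^c\ge 0$ and then sum, whereas the paper applies the interpolation directly to $W_B^c$, which gives it the explicit lower bound (\ref{min-max}) it will reuse later.
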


\begin{proof} By interpolating $W_B^c(x,y)$ with respect to $\alpha$ and $\beta$, we get \begin{equation*}\begin{aligned}W_B^c(x,y)&=W_B(y)-W_B(m)-W_B(M)+W_B(x)=\\&= \sum_{i\in B}\int_0^1 \int_0^1\frac{d}{dt}\frac{d}{ds}S_i(x+t\alpha + s\beta)ds \: dt =\\&=\sum_{i\in B} \sum_{j,k=i}^{i+ r}\int_0^1 \int_0^1\p_{j,k}S_i(x+t\alpha + s\beta)ds \: dt\ \alpha_j\beta_k.\end{aligned}\end{equation*}
Note that in the sum above $\alpha_i\beta_j\leq0$ and that the supports of $\alpha$ and $\beta$ are disjoint, so all of the terms with non-mixed derivatives vanish. Moreover, it follows from the strong twist condition (\ref{strong twist}), that non-zero terms in the formula above arise only in the case when either $j=i$, or $k=i$. By the uniform bounds from Definition \ref{S}, this gives the following inequality: \begin{equation}\begin{aligned} W_B^c(x,y)=&\sum_{i\in B} \sum_{j=i}^{i+ r}\int_0^1 \int_0^1\p_{j,i}S_i(x+t\alpha + s\beta)ds \: dt\ (\alpha_i\beta_j+\alpha_j\beta_i) \\ \geq & -\lambda \sum_{i\in B} \sum_{j=i}^{i+ r} (\alpha_j\beta_i+\alpha_i\beta_j). \label{min-max} \end{aligned}\end{equation} In particular, since $\beta\leq 0$ and $\alpha\geq0$, this implies that $W_B^c(x,y)\geq0$, so $W_B(x)+W_B(y)\geq W_B(m)+W_B(M)$. 
\end{proof}

In fact, it is clear from the proof above that $W_B(x)+W_B(y)> W_B(m)+W_B(M)$, whenever such $i,j\in\Z$ exist that $|i-j|\leq r$ and $\alpha_i\beta_j<0$ or $\alpha_j\beta_i<0$. This inequality means that any crossing of the sequences $x,y$ is reflected in the value of $W_B^c(x,y)$. This is a consequence of the strong twist condition (\ref{strong twist}) and also the reason why a weaker twist condition, as in \cite{llave-lattices} or \cite{ghost_circles} cannot be used in the following proofs.

Next, we explain an important property of solutions of the variational principle (\ref{var_principle}).

\begin{lemma}[Strong ordering property]\label{strong ordering}
Let $B\subset \Z$ and let $x$ and $y$ be solutions of the recurrence relation (\ref{rr}) for all $i\in \mathring{B}$. Then it holds that if $x< y$ on $B$, then $x\ll y$ on $\mathring{B}$. 
\end{lemma}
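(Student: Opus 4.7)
The plan is to argue by contradiction, in the style of a discrete strong maximum principle. Suppose some $i^\ast\in\mathring B$ satisfies $x_{i^\ast}=y_{i^\ast}$. Since both $x$ and $y$ solve (\ref{rr}) at $i^\ast$, subtracting the two equations gives
\[
0=\sum_{j=i^\ast-r}^{i^\ast}\bigl(\partial_{i^\ast}S_j(y)-\partial_{i^\ast}S_j(x)\bigr).
\]
My next step is to rewrite the right-hand side as a weighted linear combination of the differences $y_k-x_k$. Applying the fundamental theorem of calculus along the segment $t\mapsto x+t(y-x)$ yields
\[
0=\sum_{j=i^\ast-r}^{i^\ast}\sum_{k=j}^{j+r}\left(\int_0^1 \partial_{i^\ast}\partial_k S_j\bigl(x+t(y-x)\bigr)\,dt\right)(y_k-x_k).
\]

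Here the sparse Hessian structure from Remark \ref{remarkS} takes over: $\partial_{i^\ast}\partial_k S_j\equiv 0$ unless $k=i^\ast$ or $k$ coincides with the ``center'' index $j$ of $S_j$. The contributions with $k=i^\ast$ drop out because $y_{i^\ast}-x_{i^\ast}=0$ by hypothesis, and what remains collapses to a sum
\[
0=\sum_{\substack{k\in[i^\ast-r,i^\ast+r]\\ k\neq i^\ast}} c_k\,(y_k-x_k),
\]
in which every coefficient $c_k$ is a time-average of one of the twist-controlled mixed partials, so $c_k\le-\lambda<0$ by (\ref{strong twist}). Because $x\le y$ on the relevant indices, each factor $y_k-x_k$ is nonnegative, so every term above is $\le 0$; the sum can vanish only if every term does, and strict negativity of the $c_k$ forces $y_k=x_k$ for every $k\in[i^\ast-r,i^\ast+r]$.

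To conclude, I would iterate this spreading step: any of the newly identified equality points that still lies in $\mathring B$ satisfies the same recurrence, and the argument above applies verbatim. Since $\mathring B$ is a contiguous $\Z$-interval and $r\ge 1$, finitely many rounds of propagation push the equality $x=y$ through all of $\bar B$, and in particular through $B$. This directly contradicts the standing hypothesis $x\neq y$ on $B$, so the assumed $i^\ast$ cannot exist.

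The one step that requires honest bookkeeping, and which I expect to be the main technical annoyance, is verifying that every difference $y_k-x_k$ appearing in the surviving sum is genuinely nonnegative, i.e.\ that the indices $k\in[i^\ast-r,i^\ast+r]$ all lie in the region where the ordering hypothesis $x\le y$ is available. Once the conventions for $B$, $\mathring B$ and $\bar B$ introduced before Definition \ref{glob_minimizer} are lined up with the range of the inner sum, the rest is a routine discrete strong-maximum-principle spreading argument powered solely by the strong twist and the sparse second-derivative structure of the $S_i$.
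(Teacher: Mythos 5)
Your argument is the same strong-maximum-principle propagation used in the paper's proof: subtract the two recurrence relations, linearize via the fundamental theorem of calculus, invoke the sparse Hessian structure of Remark \ref{remarkS} together with the strong twist (\ref{strong twist}) to keep only strictly negative coefficients on nonnegative factors, and spread the equality outward. If anything, your step ordering is slightly cleaner than the paper's: by assuming $x_{i^\ast}=y_{i^\ast}$ before writing out the integrated Hessian sum, the uncontrolled diagonal terms $\p_{i^\ast}\p_{i^\ast}S_j$ (for $j<i^\ast$), which the twist condition does not bound, are multiplied by zero and never need to be discussed, whereas the paper's displayed identity (\ref{x-y}) is silently only valid once $\alpha_i=0$ is in force. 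Your closing caveat is also legitimate: for $i^\ast$ close to $i_1$ the surviving sum reaches indices $k\in[i_1+1,i_1+r]=\bar B\setminus B$, where the stated hypothesis $x<y$ on $B$ is silent; this is a latent gap in the lemma's \emph{statement} rather than in your argument, and in the one place the lemma is applied (the proof of Lemma \ref{even_crossings}) the ordering $x<y$ has already been established on all of $\bar B$, which is precisely what your propagation needs.
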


\begin{proof}
Since $x< y$ on $B$, it follows that $y_i-x_i=\alpha_i$ for all $i\in B$. It must hold for every $i \in \mathring{B}$ that \begin{equation}\label{x-y} \begin{aligned}0=&\p_{i}W(y)-\p_{i}W(x)=\sum_{j=i- r}^{i}(\p_{i} S_{j}(y)-\p_i S_j(x))=\\ =&\sum_{j=i- r}^{i}\sum_{k=j}^{j+ r}\int_0^1\p_{k,i}S_j(\tau y+(1-\tau)x)d \tau \alpha_k =\\ = & \sum_{j=i- r}^{i}\int_0^1\p_{j,i}S_j(\tau x+(1-\tau)y)d \tau \alpha_j+\sum_{j=i}^{i+ r}\int_0^1\p_{j,i}S_i(\tau x+(1-\tau)y)d \tau \alpha_j.\end{aligned} \end{equation} The third equality follows from the strong twist condition (\ref{strong twist}), by setting $k=j$ for the first sum, and $j=i$ followed by $k=j$ for the second sum. 

Assume now that there is an $i\in \mathring{B}$ with $\alpha_i=0$. Then, by (\ref{strong twist}), all the second derivatives in (\ref{x-y}) are strictly negative and since $\alpha_j\geq 0$ for all $j$, it must follow that $\alpha_j=0$ for all $j\in [i- r, i+ r]$. By induction, it follows that $x=y$ on $B$, a contradiction because we assumed that $x<y$ on $B$, so it must hold that $\alpha_i>0$ for all $i\in \mathring{B}$.
\end{proof}

Applying Lemma \ref{strong ordering} gives the following corollary. 

\begin{corollary}\label{ordering minimizers}
Assume that $x\neq y$ are two solutions of (\ref{rr}) such that $x>y$. Then $x\gg y$.
\end{corollary}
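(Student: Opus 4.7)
The plan is to derive the corollary directly from Lemma~\ref{strong ordering} applied on progressively larger finite segments of $\Z$. No new machinery is needed; the statement is essentially a globalization of the strong ordering property already established for finite $B$.

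Since $x > y$ holds in the weak-ordering sense from Section~\ref{setting}, we have $y_i \leq x_i$ for every $i \in \Z$ with strict inequality at some index $i_0$, i.e.\ $y_{i_0} < x_{i_0}$. Fix an arbitrary $k \in \Z$; it suffices to show $y_k < x_k$. I would choose a finite segment $B = [j_0 - r, j_1]\subset\Z$ large enough that both $i_0$ and $k$ lie in the interior $\mathring B = [j_0, j_1]$. On $B$ the sequences satisfy $y \leq x$ pointwise with strict inequality at the interior point $i_0$, so $y < x$ on $B$ in the weak-ordering sense. Because $x$ and $y$ solve the recurrence (\ref{rr}) on all of $\Z$, they in particular solve it on $\mathring B$, so the hypotheses of Lemma~\ref{strong ordering} are met (with the roles of $x$ and $y$ exchanged). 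Lemma~\ref{strong ordering} then yields $y \ll x$ on $\mathring B$; in particular $y_k < x_k$.

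Since $k$ was arbitrary, this proves $y \ll x$, i.e.\ $x \gg y$. The only point that needs care is that the witness $i_0$ of strict inequality must be placed in the interior $\mathring B$ (not merely in $\bar B$), so that the weak inequality $y < x$ on $B$ is non-trivial; this is arranged by symmetrically enlarging $B$ around $\{i_0, k\}$. There is no real obstacle: once Lemma~\ref{strong ordering} is in hand, the corollary reduces to a routine exhaustion argument on finite intervals.
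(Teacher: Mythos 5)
Your argument is correct and is exactly how the paper intends Corollary~\ref{ordering minimizers} to follow: the paper gives no separate proof beyond ``Applying Lemma~\ref{strong ordering} gives the following corollary,'' and your exhaustion over finite segments $B$ (placing the witness of strict inequality and the target index $k$ inside $\mathring B$) is the natural way to carry that application out. The only harmless overcaution is your insistence that the witness $i_0$ lie in $\mathring B$ --- strict inequality at any index of $B$ already makes the hypothesis ``$y<x$ on $B$'' non-trivial --- but restricting to the interior certainly suffices and requires no further care.
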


The estimate (\ref{min-max}) from Lemma \ref{min-max lemma} and Corollary \ref{ordering minimizers} now give us the means to analyze more precisely, how two global minimizers cross in a specific domain. 

In the remainder of the text, the following notation will prove useful. 

\begin{definition}\label{min-max-variations}
Let $B\subset \Z$ be arbitrary, but fixed. Define 
\begin{equation}\begin{aligned}M^B_i(x):=\left\{ \begin{array}{lll} x_i & \mbox{if} & i \notin \mathring{B},  \\ M_i & \mbox{if}& i \in \mathring{B};\end{array} \right. \hspace{0.5cm} M^B_i(y):=\left\{ \begin{array}{lll} y_i & \mbox{if} & i \notin \mathring{B},  \\ M_i & \mbox{if}& i \in \mathring{B};\end{array} \right.\nonumber \\ m^B_i(x):=\left\{ \begin{array}{lll} x_i & \mbox{if} & i \notin \mathring{B},  \\ m_i & \mbox{if}& i \in \mathring{B};\end{array} \right. \hspace{0.5cm} m^B_i(y):=\left\{ \begin{array}{lll} y_i & \mbox{if} & i \notin \mathring{B},  \\ m_i & \mbox{if}& i \in \mathring{B}.\end{array} \right.\end{aligned}\end{equation} By this definition we changed $M$ and $m$ into variations of $x$ and $y$ with support in $\mathring{B}$. 
\end{definition}

\begin{lemma}\label{even_crossings} 
Let $i_0<k_0<k_1<i_1$ be integers such that $i_0\leq k_0- r$ and $i_1\geq k_1+ r$. If $x$ and $y$ are global minimizers, such that $x_i\leq y_i$ for all $i \in [i_0,k_0-1] \cup [k_1+1,i_1]$, then $x\ll y$ on $[k_0,k_1]$.
\end{lemma}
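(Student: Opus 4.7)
The strategy is to combine the minimum-maximum principle with the global-minimizer property of $x$ and $y$ on a suitably chosen domain to force equality in the min-max inequality, and then to apply the strong ordering Lemma \ref{strong ordering}. Set $B := [k_0-r, k_1]$, so that $\mathring B = [k_0, k_1]$ and $\bar B = [k_0-r, k_1+r]$. The hypotheses $i_0 \leq k_0 - r$ and $i_1 \geq k_1 + r$ ensure $x \leq y$ on $\partial B$, and consequently the variations $\tilde x := m^B(x)$ and $\tilde y := M^B(y)$ of Definition \ref{min-max-variations} agree with $m$ and $M$ respectively throughout $\bar B$.

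Since $x$ and $y$ are global minimizers, $W_B(x) \leq W_B(\tilde x) = W_B(m)$ and $W_B(y) \leq W_B(\tilde y) = W_B(M)$. Adding these and comparing with the reverse inequality from Lemma \ref{min-max lemma} forces equalities $W_B(x)=W_B(\tilde x)$ and $W_B(y)=W_B(\tilde y)$. Thus $\tilde x, \tilde y$ are also local minima of $W_B$ under variations with support in $\mathring B$, so both solve the recurrence (\ref{rr}) on $\mathring B$.

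Now apply Lemma \ref{strong ordering} to the pair $(\tilde x, x)$: since $\tilde x \leq x$ on $\bar B$ with equality on $\partial B$, either $\tilde x = x$ on $B$ (giving $m = x$ on $\mathring B$, i.e.\ $x \leq y$ on $[k_0,k_1]$), or $\tilde x \ll x$ on $\mathring B$ (giving $m = y < x$ pointwise on $[k_0,k_1]$). The intended outcome is the first case; combined with $x \leq y$ on $\partial B$ this gives $x \leq y$ on all of $\bar B$, and a last invocation of Lemma \ref{strong ordering} to $(x,y)$ on $\bar B$ upgrades this to the desired strict inequality $x \ll y$ on $[k_0,k_1]$ (the alternative $x=y$ on $\bar B$ being the trivial case, in which the recurrence propagates the identity to all of $\Z$).

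The main obstacle is ruling out the second, reverse-ordering case. There $\tilde x = y$ on $\mathring B$ and $\tilde x \leq y$ on $\partial B$; feeding this into Lemma \ref{strong ordering} for $(\tilde x, y)$ on $B$ (the alternative $\tilde x \ll y$ on $\mathring B$ being excluded by the equality already on $\mathring B$) forces $\tilde x = y$ on $B$, hence $x = y$ on $\partial B_-$. A symmetric argument handles $\partial B_+$: expand the identity $\partial_{k_1} W(\tilde x) - \partial_{k_1} W(y) = 0$ by Taylor in $\tilde x - y$, which vanishes outside $[k_1+1,k_1+r]$; the strong twist (\ref{strong twist}) together with the vanishing of non-diagonal second derivatives shows that only the terms $\partial_l \partial_{k_1} S_{k_1} \leq -\lambda < 0$ (for $l\in[k_1+1,k_1+r]$) survive, forcing $x_l = y_l$ on $\partial B_+$. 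Now $y \leq x$ on $\bar B$ with equality on $\partial B$ and strict somewhere on $\mathring B$, so Lemma \ref{strong ordering} applied to $(y,x)$ on $\bar B$ yields a contradiction: neither $y = x$ on $\bar B$ (which contradicts the strict inequality on $\mathring B$) nor $y \ll x$ on $[k_0,k_1+r]$ (which contradicts $y = x$ on $\partial B_+$) is possible. The reverse-ordering case is therefore empty, completing the proof.
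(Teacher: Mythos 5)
Your proof starts along the same lines as the paper's: choose $B=[k_0-r,k_1]$, observe $m^B(x)=m$ and $M^B(y)=M$ on $\bar B$, and use minimality plus the minimum--maximum principle to force $W_B(x)=W_B(m^B(x))$. But at this point you only extract the weak conclusion that $m^B(x)$ solves the recurrence on $\mathring B$, which is why you end up confronting a two-case dichotomy for the comparison of $m^B(x)$ and $x$. The key observation you're missing is stronger: $m^B(x)$ differs from $x$ only on a set where $x$ already achieves the minimal energy, and hence $m^B(x)$ is itself a \emph{global} minimizer (for any domain $B'\supset \bar B$ and any compactly supported $v$, the configuration $m^B(x)+v$ is still a compactly supported variation of $x$, so $W_{B'}(m^B(x)+v)\geq W_{B'}(x)=W_{B'}(m^B(x))$). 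Once this is in hand, Corollary \ref{ordering minimizers} --- which is a global statement about solutions of (\ref{rr}) on all of $\Z$ --- applies directly: two global minimizers with $x\geq m^B(x)$ must be equal or strongly ordered, and since they agree off $\mathring B$, they are equal. Your ``Case 2'' never arises, and the proof concludes as in your ``Case 1''.

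The reason this matters is that your attempt to refute Case 2 contains a genuine gap. After correctly deriving $x=y$ on $\partial B$ (the Taylor-expansion step for $\partial B_+$ is fine, and your derivation of $x=y$ on $\partial B_-$ from Lemma \ref{strong ordering} is also fine), you invoke ``Lemma \ref{strong ordering} applied to $(y,x)$ on $\bar B$'' to obtain the alternative $y\ll x$ on $[k_0,k_1+r]$. But that application corresponds to the segment $B''$ with $\mathring{B''}=[k_0,k_1+r]$, which in turn requires the hypothesis $y\leq x$ on $\bar{B''}=[k_0-r,k_1+2r]$ (the interpolation formula in Lemma \ref{strong ordering} needs the sign of $y_j-x_j$ for all $j$ in a radius-$r$ window around each index of $\mathring{B''}$). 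You only know $y\leq x$ on $\bar B=[k_0-r,k_1+r]$, and you have no ordering information on $[k_1+r+1,k_1+2r]$. Indeed, repeating your own Taylor-expansion argument at $i=k_1+1$ in the Case-2 configuration actually forces $x_{k_1+r+1}<y_{k_1+r+1}$, which is consistent with the lemma's hypothesis and does not produce a contradiction on its own. So the step ``yields a contradiction'' does not go through, and the argument as written is incomplete. The clean repair is the global-minimizer observation above.

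A minor additional remark: in your ``Case 1'' aside that $x=y$ on $\bar B$ is ``the trivial case, in which the recurrence propagates the identity,'' note that if $x\equiv y$ globally the conclusion $x\ll y$ is simply false, so this alternative is not absorbed into the lemma --- it is an implicit standing assumption ($x\neq y$) that the paper's own proof also treats silently.
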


\begin{proof}
Let $B:=[k_0- r,k_1]$, so that $\mathring{B}=[k_0,k_1]$ and that $m^B(x)$ and $M^B(y)$ are variations of $x$ and $y$ respectively, with support in $\mathring{B}$. Observe that by assumption, $M^B(y)=M$ and $m^B(x)=m$ on $\p B=[k_0- r,k_0-1]\cup[k_1+1,k_1+ r]$ and so by definition also on the whole $\bar B$. Recall that $W_B(x)$ is a function that depends only on terms of $x$ that have indices in $\bar B$. So it must hold by Lemma \ref{min-max lemma} and by the definition of global minimizers (Definition \ref{glob_minimizer}) that $W_B(x)=W_B(m^B(x))$ and $W_B(y)=W_B(M^B(y))$. This implies that also $m^B(x)$ and $M^B(y)$ are global minimizers. Since it holds that $x\geq m^B(x)$, but not $x\gg m^B(x)$, Corollary \ref{ordering minimizers} implies that $x\equiv m^B(x)$. So, on $\bar B$ it holds that $x<y$ and by Lemma \ref{strong ordering}, it then holds that $x\ll y$ on $\mathring{B}$.
\end{proof}

\begin{corollary}[Aubry's Lemma]\label{Aubry's Lemma}
Assume that the local energy $S$ satisfies Definition \ref{S} with the range $r=1$ and assume that $x\neq y$ are global minimizers for $S$. Then $x$ and $y$ cross at most once, i.e. $D=i_0$ or $D=\varnothing$.
\end{corollary}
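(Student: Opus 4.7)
The plan is to argue by contradiction. Suppose $x \neq y$ are global minimizers with $r = 1$ and that $|D| \geq 2$, say $D = [j_0, j_1]$ with $j_1 > j_0$. By the definition of the domain of crossing, $x$ and $y$ are each weakly ordered on $(-\infty, j_0]$ and on $[j_1, \infty)$ with some orientation; applying Lemma \ref{strong ordering} on sufficiently long finite sub-intervals of these half-lines promotes the orderings to strict inequalities at the endpoints $j_0$ and $j_1$. I split into two cases according to whether the two orientations agree.

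In the first case both orderings agree: WLOG $x \leq y$ on both $(-\infty, j_0]$ and $[j_1, \infty)$. Since $r = 1$, I apply Lemma \ref{even_crossings} with $k_0 = j_0$, $k_1 = j_1$, $i_0 = j_0 - 1$ and $i_1 = j_1 + 1$ (allowed since $j_0 < j_1$); the ordering hypothesis then reduces to $x_{j_0 - 1} \leq y_{j_0 - 1}$ and $x_{j_1 + 1} \leq y_{j_1 + 1}$, both of which hold. The conclusion gives $x \ll y$ on $[j_0, j_1]$, hence $x \leq y$ on all of $\Z$, and Corollary \ref{ordering minimizers} then forces $x \ll y$ everywhere. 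But a globally strictly ordered pair has empty domain of crossing, contradicting $|D| \geq 2$.

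In the second case the orderings are opposite: WLOG $x \leq y$ on $(-\infty, j_0]$ and $y \leq x$ on $[j_1, \infty)$. Here Lemma \ref{even_crossings} cannot be invoked directly because the boundary orderings flip across $D$, and $m^B(x)$, $M^B(y)$ no longer coincide with $m$, $M$ on all of $\bar B$. Instead one exploits the strict form of Lemma \ref{min-max lemma}: since $y-x$ has opposite sign at two indices within interaction range $r = 1$ somewhere across $D$, the inequality (\ref{min-max}) is strict and $W_B^c(x,y) > 0$ on any window $B$ isolating $D$. One then constructs compactly supported variations $\tilde x,\tilde y$ by the classical Aubry swap: locate two indices $k < k'$ where the sign of $y - x$ flips, set $\tilde x_i = y_i$, $\tilde y_i = x_i$ for $k+1 \leq i \leq k'$, and leave everything unchanged outside. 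A direct computation using the strong twist estimate (\ref{strong twist1}) expresses $W_B(\tilde x) + W_B(\tilde y) - W_B(x) - W_B(y)$ as a sum $\Delta_k + \Delta_{k'}$ of two boundary terms of the form $-\int\!\!\int \partial_1 \partial_2 S$ over rectangles on which the $(y-x)$-coordinates have opposite signs, hence both strictly negative. This contradicts the separate minimality of $x$ and $y$, since one would have both $W_B(\tilde x) \geq W_B(x)$ and $W_B(\tilde y) \geq W_B(y)$.

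The main obstacle is in the opposite-orderings case, where one must guarantee at least two sign changes of $y - x$ inside $D$ so that the swap is compactly supported. The minimality built into the definition of $D$ takes care of this: if the sign of $y - x$ changed only once, between some consecutive lattice points $k, k+1 \in D$, then $(-\infty, k]$ and $[k+1,\infty)$ would already carry consistent orderings and $D$ would reduce to $\{k, k+1\}$; this is the genuine ``single Aubry crossing'' covered by the conclusion of the corollary. Any strictly larger $D$ in the opposite-orderings case must therefore contain at least two sign changes of $y - x$, and the swap argument above furnishes the contradiction.
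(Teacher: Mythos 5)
Your Case~1 (same orientation of the ordering on both sides of $D$) is essentially the paper's mechanism: apply Lemma~\ref{even_crossings} across $D$ and conclude the ordering propagates, forcing $D=\varnothing$. Your Case~2, however, departs from the paper in a genuine way. The exchange you invoke --- swap $x$ and $y$ on a segment between two sign changes, expand $W_B(\tilde x)+W_B(\tilde y)-W_B(x)-W_B(y)=\Delta_k+\Delta_{k'}$ into two strictly negative twist terms, and contradict minimality --- is the original Aubry--Le~Daeron argument and it is valid, but it is not a tool this paper develops. The paper's only comparison device is the minimum--maximum decomposition (Lemma~\ref{min-max lemma}) and its consequence Lemma~\ref{even_crossings}, which replaces $x,y$ locally by $\max$ and $\min$ rather than by one another. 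The paper's one-line proof applies Lemma~\ref{even_crossings} with the roles of $x,y$ both kept and reversed: two $\leq$-indices or two $\geq$-indices force the corresponding ordering in between, so both $\{i: x_i\geq y_i\}$ and $\{i: x_i\leq y_i\}$ are intervals; since they cover $\Z$ and $x\neq y$ they meet in at most one place. That single observation treats your two cases uniformly and needs no fresh exchange estimate. Your route buys independence from the $\max/\min$ construction, at the cost of re-proving the exchange inequality from scratch.

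There are also two formal gaps in the write-up. First, you set $D=[j_0,j_1]$ with $j_0<j_1$ at the outset, so the argument silently excludes the possibility that $D$ is a half-line or all of $\Z$, which the corollary must also rule out; your swap extends to that setting (any two sign changes inside an infinite $D$ will serve), but the reduction is missing as written. Second, your contradiction hypothesis is $|D|\geq 2$, yet your own closing paragraph concedes that the opposite-orientation case with a single strict sign change produces a legitimate two-point domain $D=\{k,k+1\}$ (a crossing between two lattice sites), which is not a contradiction. So what the argument actually establishes is ``$|D|\leq 2$, and if $|D|=2$ then the orientations flip across $D$.'' That is the correct content of ``cross at most once'' (the paper's shorthand ``$D=i_0$'' elides the between-lattice crossing), but a proof by contradiction launched from $|D|\geq 2$ does not close; you should either start from $|D|\geq 3$ and dispose of $|D|=2$ with same orientation separately via Lemma~\ref{even_crossings}, or drop the contradiction template and argue directly as the paper does.
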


\begin{proof}
Lemma \ref{even_crossings} in this case implies that if there exist indices $i_0 \in \Z$ and $i_1 \in \Z$ such that $x_{i_0}\geq y_{i_0}$ and $x_{i_1}\geq y_{i_1}$, then $x> y$ on $[i_0,i_1]$. This easily implies the statement.
\end{proof}

Corollary \ref{Aubry's Lemma} shows that Lemma \ref{even_crossings} implies Aubry's lemma, or the single crossing principle in the case of twist maps. In case of $ r>1$, it has some more subtle consequences.

\vspace{0.5cm}
\noindent
\textbf{Implications of Lemma \ref{even_crossings}:} 

Recall definition \ref{domain of crossing} of the domain of crossing. Lemma \ref{even_crossings} immediately implies the following corollary, which we state without a proof.

\begin{corollary}\label{crosses}
Let $D$ be the domain of crossing for $x$ and $y$. If $D$ is bounded and $x>y$ on $\Z\backslash D$, then $D=\varnothing$.
\end{corollary}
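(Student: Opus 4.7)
The plan is a short argument by contradiction, using Lemma \ref{even_crossings} with the roles of $x$ and $y$ interchanged. Suppose $D$ is bounded and nonempty, so $D=[j_0,j_1]$ for some integers $j_0\le j_1$. By assumption $x_i\ge y_i$ for every $i\in \Z\setminus D$, and in particular this weak ordering holds on the two buffer intervals $[j_0-1-r,j_0-2]$ and $[j_1+2,j_1+1+r]$ that sit just outside the slightly enlarged interval $[j_0-1,j_1+1]$.

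Next I would apply Lemma \ref{even_crossings} to the pair $(y,x)$ with the choices $i_0=j_0-1-r$, $k_0=j_0-1$, $k_1=j_1+1$, $i_1=j_1+1+r$. The inequalities $i_0\le k_0-r$, $i_1\ge k_1+r$, and $k_0<k_1$ are all satisfied (the slight enlargement by one on each side prevents $k_0=k_1$ when $D$ is a single point), and the weak ordering $y\le x$ holds on the required boundary segments. The lemma therefore yields $y\ll x$ on $[k_0,k_1]\supseteq D$.

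Combining this strict inequality on $D$ with the weak inequality $x\ge y$ outside $D$ gives $x\ge y$ on all of $\Z$, with $x_i>y_i$ at least at one index. Hence the ordering condition in Definition \ref{domain of crossing} is satisfied globally by the empty interval, and by the minimality clause of that definition we must have $D=\varnothing$, contradicting our standing assumption. I do not anticipate any real obstacle: the only subtlety worth noting is the degenerate case $j_0=j_1$, which is handled automatically by enlarging the domain by one on each side before invoking Lemma \ref{even_crossings}.
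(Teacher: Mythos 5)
Your proof is correct and follows the route the paper intends: the paper states Corollary~\ref{crosses} without proof, remarking only that it is ``immediate'' from Lemma~\ref{even_crossings}, and that is exactly what you do. Applying Lemma~\ref{even_crossings} to the pair $(y,x)$ with suitably chosen indices and then invoking the minimality clause of Definition~\ref{domain of crossing} is the natural and correct argument. The one genuinely useful thing you add is the careful choice $k_0=j_0-1$, $k_1=j_1+1$ (rather than $k_0=j_0$, $k_1=j_1$), which handles the degenerate case $j_0=j_1$ where the hypothesis $k_0<k_1$ of Lemma~\ref{even_crossings} would otherwise fail; the paper's ``immediately'' glosses over this.
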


Let $D=[j_0,j_1]\neq \varnothing$ be bounded. Then by Corollary \ref{crosses}, $x\geq y$ on $(-\infty,j_0]$ implies that $y\geq x$ on $[j_1,\infty)$. In particular, we may assume without loss of generality that if $D=[j_0,j_1]\neq \varnothing$ is bounded, then $x\leq y $ (or equivalently $\beta=0$) on $(-\infty,j_0-1]$ and $x\geq y$ (or equivalently $\alpha=0$) on $[j_1+1,\infty)$. I.e., we assume that $j_0:=\min\{i \in \Z \ | \ \beta_i<0\}$ and $j_1:=\max\{i \in \Z \ | \ \alpha_i>0\}$. This will be our assumption in Section \ref{section bounded}.

Moreover, in case the domain of crossing of $x$ and $y$, $D=[j_0,j_1]\neq \varnothing$ is bounded, applying Lemma \ref{even_crossings} with either $k_0=j_0$, or $k_1=j_1$ and reversing the roles of $x$ and $y$ if necessary, the definition of $j_0$ and $j_1$ gives us the following corollary.

\begin{corollary}\label{dense_crossings}
If $D=[j_0,j_1]\neq \varnothing$ is bounded, there is no segment $I \subset [j_0-r+1,j_1 + r-1]$ with $|I|=r$, such that $\alpha|_I \equiv 0$ or $\beta|_I \equiv 0$.
\end{corollary}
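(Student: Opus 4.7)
The plan is to argue by contradiction and invoke Lemma \ref{even_crossings} to produce an ordering that conflicts with the definition of $j_0$ or $j_1$. I rely on the standing convention recorded just above: $x\leq y$ on $(-\infty,j_0-1]$ and $y\leq x$ on $[j_1+1,\infty)$, while at the extreme indices $y_{j_0}<x_{j_0}$ and $y_{j_1}>x_{j_1}$, i.e.\ $\beta_{j_0}<0$ and $\alpha_{j_1}>0$.

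Suppose first that $\alpha|_I\equiv 0$ for some $I=[a,a+r-1]\subset[j_0-r+1,j_1+r-1]$; equivalently, $y\leq x$ on $I$. Because $\alpha_{j_1}>0$ the index $j_1$ cannot lie in $I$, which together with the constraint on $I$ forces $a+r\leq j_1$. I then apply Lemma \ref{even_crossings} with the roles of $x$ and $y$ interchanged, taking $i_0=a$, $k_0=a+r$, $k_1=j_1$, $i_1=j_1+r$: the required weak order $y\leq x$ holds on $[i_0,k_0-1]=I$ and on $[k_1+1,i_1]=[j_1+1,j_1+r]$, so the lemma yields $y\ll x$ on $[k_0,k_1]\ni j_1$, contradicting $y_{j_1}>x_{j_1}$. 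The case $\beta|_I\equiv 0$ is symmetric: $\beta_{j_0}<0$ forces $j_0\notin I$ and hence $a\geq j_0+1$, and Lemma \ref{even_crossings} with $i_0=j_0-r$, $k_0=j_0$, $k_1=a-1$, $i_1=a+r-1$ produces $x\ll y$ on $[j_0,a-1]\ni j_0$, contradicting $x_{j_0}>y_{j_0}$.

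I expect the only delicate point to be the degenerate boundary cases $a+r=j_1$ (for the first case) and $a=j_0+1$ (for the second), where the chosen $k_0$ and $k_1$ coincide and so violate the strict inequality $k_0<k_1$ required by Lemma \ref{even_crossings}. I will handle these by widening the interior by one index into the half-line on which the ordering is already known globally: for example, in the first degenerate case I replace $k_1=j_1$ by $k_1=j_1+1$ and $i_1$ by $j_1+r+1$, so that the right boundary strip becomes $[j_1+2,j_1+r+1]\subset[j_1+1,\infty)$, where $y\leq x$ still holds. Lemma \ref{even_crossings} then applies and forces $y\ll x$ on $[j_1,j_1+1]$, which again contradicts $y_{j_1}>x_{j_1}$; the other degenerate case is handled in the same spirit by shifting $k_0$ one step to the left into $(-\infty,j_0-1]$.
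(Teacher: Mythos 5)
Your proof is correct and matches the approach the paper indicates (apply Lemma \ref{even_crossings} with $k_0=j_0$ or $k_1=j_1$, swapping $x$ and $y$ as needed, and invoke the standing convention $\beta_{j_0}<0$, $\alpha_{j_1}>0$). Your explicit treatment of the degenerate cases where $k_0$ and $k_1$ would coincide is a detail the paper's one-line sketch glosses over, and you resolve it cleanly by extending the interior one step into the half-line where the weak ordering is already known.
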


In case the domain of crossing $D$ of $x$ and $y$ is unbounded, the equivalent statement that follows from Lemma \ref{even_crossings} is the following.

\begin{proposition}\label{infinite_oscillations}
Let $D$ be the domain of crossing for $x$ and $y$. If $D$ is unbounded, then there exists an unbounded domain $\tilde D\subset D$, such that there is no segment $I \subset \tilde D$ with $|I|=r$, such that $\alpha|_I \equiv 0$ or $\beta|_I \equiv 0$.
\end{proposition}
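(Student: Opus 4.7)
The plan is to argue that in the unbounded-$D$ case, any length-$r$ window in $D$ on which $\alpha$ or $\beta$ vanishes identically must be confined to a bounded region, so a sufficiently far tail of $D$ automatically serves as $\tilde D$. The key mechanism is the same one behind Corollary \ref{dense_crossings}: two length-$r$ windows on which $\alpha \equiv 0$ (i.e., $y \leq x$ on both) can be used as the two bracketing intervals in Lemma \ref{even_crossings}, applied with $x$ and $y$ swapped, forcing $y \leq x$ throughout their convex hull.

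I treat the case $D \supseteq [j_0, \infty)$; the case of $D$ unbounded below is the mirror image. Suppose for contradiction there are length-$r$ windows $I_n = [m_n, m_n + r - 1] \subseteq [j_0, \infty)$ with $\alpha|_{I_n} \equiv 0$ and $m_n \to \infty$. For each $n$ with $m_n \geq m_1 + r + 2$, I apply Lemma \ref{even_crossings} with $x$ and $y$ interchanged, taking the lemma's data to be $(i_0, k_0, k_1, i_1) = (m_1,\, m_1 + r,\, m_n - 1,\, m_n + r - 1)$. Then $y \leq x$ holds on $[i_0, k_0 - 1] \cup [k_1 + 1, i_1] = I_1 \cup I_n$ and the remaining numeric hypotheses are immediate, so the lemma yields $y \ll x$ on $[k_0, k_1] = [m_1 + r, m_n - 1]$. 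Combined with $y \leq x$ on $I_1 \cup I_n$, this gives $y \leq x$ on the full interval $[m_1, m_n + r - 1]$, and letting $n \to \infty$ produces $y \leq x$ on $[m_1, \infty)$.

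But weak ordering on the infinite right tail $[m_1, \infty)$ forces, by the minimality in Definition \ref{domain of crossing}, the domain of crossing $D$ to lie in $(-\infty, m_1 - 1]$, contradicting unboundedness above. The identical argument with $\alpha, \beta$ (and $x, y$) interchanged rules out infinitely many length-$r$ windows with $\beta \equiv 0$. Choosing $K$ larger than the (finitely many) starting indices of all bad windows, $\tilde D := [K, \infty) \subseteq D$ is unbounded and contains no length-$r$ subsegment on which $\alpha$ or $\beta$ is identically zero.

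I expect no serious obstacle beyond one small book-keeping point: the bridging interval $[k_0, k_1] = [m_1 + r, m_n - 1]$ must be nondegenerate for Lemma \ref{even_crossings} to apply, which is why the restriction $m_n \geq m_1 + r + 2$ appears — this is automatic for all sufficiently large $n$ since $m_n \to \infty$, and windows with $m_n - m_1 \leq r + 1$ overlap or are adjacent and so need not be bridged at all.
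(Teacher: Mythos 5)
Your proof is correct and rests on the same key mechanism as the paper's: applying Lemma \ref{even_crossings} (with the roles of $x$ and $y$ swapped) to two well-separated length-$r$ windows with $\alpha\equiv 0$ to force $y\leq x$ on their convex hull, and concluding that unboundedly many such windows would make $x$ and $y$ weakly ordered on a tail, contradicting the minimality of $D$. The paper phrases the same idea more tersely as ``at most one segment with $\alpha|_I\equiv 0$ (resp.\ $\beta|_J\equiv 0$),'' so your version is simply a more explicit unpacking of the same argument.
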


\begin{proof}
Let $D$ be the domain of crossing for global minimizers $x$ and $y$, as in Definition \ref{domain of crossing}. By Lemma \ref{even_crossings} it holds that there is at most one segment $[i_l,i_r]=I\subset D$ with $i_r-i_l\geq r$, such that $\alpha|_I \equiv 0$. Similarly, there is at most one segment $J=[j_l,j_r]\subset D$ with $j_r-j_l \geq r$ such that $\beta|_J\equiv 0$, so we may take the unbounded domain $\tilde D$, such that it does not include any of those two segments. (Moreover, the proof of Theorem A will show that if there are such segments $I$ and $J$, then $|i_r-j_l| \leq \tilde{K}$, where $\tilde K$ is defined in Theorem A.)
\end{proof}

\subsection{The idea of the proofs}

Now we roughly explain the idea behind the proofs of Theorem A and Theorem B. 

Let $D$ be the domain of crossing for $x$ and $y$ and let $I\subset D$ be such that $|I|=r$, but otherwise arbitrary. By Corollary \ref{dense_crossings} it holds that there are indices $j,k\in I$ such that $\alpha_j > 0$ and that $\beta_k<0$. Equivalently, this holds for for every $I \in \tilde D$, where $\tilde D$ is as in Proposition \ref{infinite_oscillations}. Hence, if we assume that for some $i\in D$, $\beta_i<0$, then there exists an index $j\in [i,i+ r]$, such that $\alpha_j>0$ and similarly, if $\alpha_i>0$, there exists a $j\in [i,i+ r]$ such that $\beta_j<0$. This means that the sequences $x$ and $y$ cross between $i$ and $j$ and moreover, by (\ref{min-max}), that the crossing energy $W_B^c(x,y)$ is positive, as soon as $B \cap D \neq \varnothing$. This also implies that $W_B^c(x,y)$ grows proportionally to the size of $B \cap D \neq \varnothing$, where $\alpha_i\beta_j$ terms determine the growth rate.

Since $M^B(x)$ or $M^B(y)$ and $m^B(x)$ or $m^B(y)$ are variations of $x$ or $y$ with support in $\mathring{B}$ and because $x$ and $y$ are global minimizers, it must moreover hold for every $B$ that \begin{equation*}\begin{aligned}W_B(x)+W_B(y)\leq &W_B(M_B(x))+W_B(m_B(y)) \text{ and }\\ W_B(x)+W_B(y)\leq &W_B(M_B(y))+W_B(m_B(x)).\end{aligned}\end{equation*} Equivalently, (since $\max\{M^B(x),m^B(y)\}=M$, etc.) we can subtract $W_B(M)+W_B(m)$ on both sides of both inequalities, and write \begin{equation}\label{gp}W_B^c(x,y)\leq W^c_B(M_B(x), m_B(y)) \ \text{ and }  \ W_B^c(x,y)\leq W^c_B(M_B(y), m_B(x)).\end{equation}

Because of the following observation, we view (\ref{gp}) as the ``general principle'' of the proof. Recall that $W_B(z)$ depends only on $z_i$ with $i\in \bar B$. Moreover, it follows from Definition \ref{min-max-variations} that $M^B(y)\equiv M^B(x)\equiv M$ and $m^B(x)\equiv m^B(y) \equiv m$ on $\mathring{B}$. Then it must hold, by a similar inequality as (\ref{min-max}), that $W^c_B(M_B(y), m_B(x))$ and $W^c_B(M_B(x), m_B(y))$ depend on finitely many $\alpha$ and $\beta$ terms around $\p B$, i.e. a fixed number of terms of $x-y$ around $i_0$ and $i_1$. In view of this, we call $W^c_B(M_B(y), m_B(x))$ and $W^c_B(M_B(x), m_B(y))$ ``the boundary energies''. In fact, it turns out that the terms that arise in the boundary energies, can be estimated by a finite number of $\alpha_i\beta_j$ terms, for some indices $i,j$ close to $\p B$. These estimates are obtained in Section \ref{section boundary energy} and are the most technical part of this paper. 

These considerations together with (\ref{gp}) imply that for a large domain $B$, the products of a small number of $\alpha$ and $\beta$ terms around $\p B$ must have a value proportional to all the products of $\alpha$ and $\beta$ terms in (\ref{min-max}). Hence, this small number of terms must exhibit an exponential growth in the case that $D$ is unbounded and they give a uniform bound on the size of $D$, if $D$ is bounded.

\subsection{Estimates for the boundary energies}\label{section boundary energy}

The goal of this section is to estimate the boundary energies $W_B^c(M_B(x),m_B(y))$ and $W_B^c(M_B(y),m_B(x))$.

\begin{definition}\label{alpha-beta-var}
Define $\alpha^B(x):=M-M^B(x)$, $\beta^B(x):=m-M^B(x)$, $\alpha^B(y):=M-M^B(y)$ and $\beta^B(y):=m-M^B(y)$. 
\end{definition}

\begin{remark}\label{comparing} It follows directly from the definition of $M^B(x)$ etc. in Definition \ref{min-max-variations} and from the definition of $\alpha$ and $\beta$ (\ref{alpha-beta}) that $\alpha^B(x) \equiv 0$ on $\mathring{B}$ and $\alpha^B(x) \equiv \alpha$ else, and that $\beta^B(x) \equiv \beta - \alpha$ on $\mathring{B}$ and $\beta^B(x) \equiv \beta$ otherwise. Similarly, $\alpha^B(y) \equiv 0$ on $\mathring{B}$ and $\alpha^B(y) \equiv -\beta$ else, and that $\beta^B(y) \equiv \beta - \alpha$ on $\mathring{B}$ and $\beta^B(y) \equiv -\alpha$ otherwise. Moreover, notice that $m^B(y)=M^B(x)+\alpha^B(x)+\beta^B(x)$ and $m^B(x)=M^B(y)+\alpha^B(y)+\beta^B(y)$. \end{remark}

For the sake of brevity, let us denote \begin{equation*}\begin{aligned}I^{i,j}_B(x):=\int_0^1\int_0^1 \p_{i,j}S_i(M^B(x)+t\alpha^B(x)+s\beta^B(x))ds dt;\\I^{i,j}_B(y):=\int_0^1\int_0^1 \p_{i,j}S_i(M^B(y)+t\alpha^B(y)+s\beta^B(y))ds dt.\end{aligned}\end{equation*} Computing the crossing energy from definition \ref{crossing energy} gives us similarly as in (\ref{min-max})\begin{equation*}\begin{aligned}W_B^c(M^B(x),m^B(y))=\sum_{i\in B}\sum_{j=i}^{i+r}I^{i,j}_B(x)(\beta^B(x)_i\alpha^B(x)_j+\beta^B(x)_j\alpha^B(x)_i);\\W_B^c(M^B(y),m^B(x))=\sum_{i\in B}\sum_{j=i}^{i+r}I^{i,j}_B(y)(\beta^B(y)_i\alpha^B(y)_j+\beta^B(y)_j\alpha^B(y)_i).\end{aligned}\end{equation*} 

\begin{proposition}\label{boundary energy}
For every domain $B=[i_0-r,i_1]$ with $i_1-i_0>2 r$, the boundary energies can be split in the following way. $$W_B^c(M^B(x),m^B(y))=W_{i_0,-}^b+W_{i_1,+}^b \ \text{ and } \ W_B^c(M^B(y),m^B(x))=\tilde W_{i_0,-}^b+\tilde W_{i_1,+}^b,$$ where the energies $W_{i_0,-}^b$ and $\tilde W_{i_0,-}^b$ depend only on terms of $x$ and $y$ with indices ``close to'' $\p B_-$, and $W_{i_1,+}^b$ and $\tilde W_{i_1,+}^b$ depend only on terms of $x$ and $y$ with indices ``close to'' $\p B_+$. 

Furthermore, these energies can be split into ``mixed'' $\alpha_i\beta_j$ terms, and ``double'' $\alpha_i\alpha_j$ or $\beta_i\beta_j$ terms by $$W_{i_0,-}^b = S_{i_0,-}^{mix}+ S_{i_0,-}^{dbl} \ \text{ and } \ W_{i_1,+}^b = S_{i_1,+}^{mix}+ S_{i_1,+}^{dbl},$$ $$\tilde W_{i_0,-}^b = \tilde S_{i_0,-}^{mix}+ \tilde S_{i_0,-}^{dbl} \ \text{ and } \ \tilde W_{i_1,+}^b = \tilde S_{i_1,+}^{mix}+ \tilde S_{i_1,+}^{dbl}$$ given by
\begin{align*}
S_{i_0,-}^{mix}& :=\sum_{i=i_0-r}^{i_0-1}\sum_{j=i}^{i+r}I_B^{i,j}(x)\alpha_i\beta_j + \sum_{i=i_0-r}^{i_0-1}\sum_{j=i}^{i_0-1}I_B^{i,j}(x)\beta_i\alpha_j,\\
S_{i_0,-}^{dbl}& :=\sum_{i=i_0-r}^{i_0-1}\sum_{j=i_0}^{i+r} I_B^{i,j}(x)\alpha_i\alpha_j,\\
S_{i_1,+}^{mix}& :=\sum_{i=i_1+1}^{i_1}\sum_{j=i}^{i+r}I_B^{i,j}(x)\alpha_i\beta_j + \sum_{i=i_1-r+1}^{i_1}\sum_{j=i_1+1}^{i+r}I_B^{i,j}(x)\beta_i\alpha_j,\\
S_{i_1,+}^{dbl}& :=\sum_{i=i_1-r+1}^{i_1}\sum_{j=i_1+1}^{i+r}I_B^{i,j}(x)\alpha_i\alpha_j,\\
\tilde S_{i_0,-}^{mix}& :=\sum_{i=i_0-r}^{i_0-1}\sum_{j=i}^{i+r}I_B^{i,j}(y)\beta_i\alpha_j + \sum_{i=i_0-r}^{i_0-1}\sum_{j=i}^{i_0-1}I_B^{i,j}(y)\alpha_i\beta_j,\\
\tilde S_{i_0,-}^{dbl}& :=\sum_{i=i_0-r}^{i_0-1}\sum_{j=i_0}^{i+r}I_B^{i,j}(y)\beta_i\beta_j,\\
\tilde S_{i_1,+}^{mix}& :=\sum_{i=i_1+1}^{i_1}\sum_{j=i}^{i+r}I_B^{i,j}(y)\beta_i\alpha_j + \sum_{i=i_1-r+1}^{i_1}\sum_{j=i_1+1}^{i+r}I_B^{i,j}(y)\alpha_i\beta_j,\\
\tilde S_{i_1,+}^{dbl}& :=\sum_{i=i_1-r+1}^{i_1}\sum_{j=i_1+1}^{i+r}I_B^{i,j}(y)\beta_i\beta_j.
\end{align*}
\end{proposition}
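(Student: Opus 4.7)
The proof is a direct bookkeeping computation that splits the sum
$$W_B^c(M^B(x),m^B(y))=\sum_{i\in B}\sum_{j=i}^{i+r}I^{i,j}_B(x)\bigl(\beta^B(x)_i\alpha^B(x)_j+\beta^B(x)_j\alpha^B(x)_i\bigr),$$
and its counterpart for $W_B^c(M^B(y),m^B(x))$, according to where the indices $i,j$ sit relative to $\partial B_-$, $\partial B_+$, and $\mathring B$. The driving observation is Remark~\ref{comparing}: both $\alpha^B(x)$ and $\alpha^B(y)$ vanish on $\mathring B$, so any summand with $\alpha^B(x)_j\neq 0$ forces $j\in\partial B$, and any summand with $\alpha^B(x)_i\neq 0$ forces $i\in\partial B$.

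Since $j\in[i,i+r]$ with $i\in B=[i_0-r,i_1]$, the constraint $j\in\partial B$ confines $i$ itself to within distance $r$ of $i_0$ or of $i_1$; concretely, either $i\in[i_0-r,i_0-1]$ or $i\in[i_1-r+1,i_1]$. The hypothesis $i_1-i_0>2r$ guarantees that these two ``boundary neighborhoods'' are disjoint and that for any $i$ in one of them the whole window $[i,i+r]$ stays clear of the other; this is the only place the assumption $i_1-i_0>2r$ is used. The double sum therefore decomposes cleanly as $W_B^c(M^B(x),m^B(y))=W_{i_0,-}^b+W_{i_1,+}^b$, with disjoint index sets, and the analogous splitting for $W_B^c(M^B(y),m^B(x))$ is obtained identically.

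Within each half I substitute the explicit values from Remark~\ref{comparing}. For the pair $(M^B(x),m^B(y))$, on $\partial B$ one has $\alpha^B(x)=\alpha$ and $\beta^B(x)=\beta$, while on $\mathring B$ one has $\alpha^B(x)=0$ and $\beta^B(x)=\beta-\alpha$; for the pair $(M^B(y),m^B(x))$, on $\partial B$ one has $\alpha^B(y)=-\beta$ and $\beta^B(y)=-\alpha$, while on $\mathring B$ one has $\alpha^B(y)=0$ and $\beta^B(y)=\beta-\alpha$. Wherever the interior value $\beta-\alpha$ appears, I expand it as $\beta+(-\alpha)$. The pieces in which the pair of factors delivers a ``cross'' product $\alpha\beta$ assemble into the mixed terms $S^{mix}$ and $\tilde S^{mix}$, while the pieces in which both factors produce $\alpha$'s (in the untilded identity) or both produce $\beta$'s (in the tilded identity) assemble into the double terms $S^{dbl}$ and $\tilde S^{dbl}$. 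Using the disjointness of the supports of $\alpha$ and $\beta$ and the symmetry $\partial_i\partial_j S_i=\partial_j\partial_i S_i$ to relabel factors where convenient, one then recovers the formulas in the statement.

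The whole argument is a meticulous case analysis with no conceptual obstruction; the only delicate point is the index bookkeeping. The trickiest step is the recombination of ranges at the left boundary: with $i\in[i_0-r,i_0-1]$ fixed, the pure-$\beta$ contributions from $j\in[i,i_0-1]$ (where $\beta^B(x)_j=\beta_j$) and from $j\in[i_0,i+r]$ (where the $\beta$-piece of $\beta-\alpha$ contributes $\beta_j$) glue together seamlessly into a single sum over $j\in[i,i+r]$, producing the first summand of $S^{mix}_{i_0,-}$; the leftover pure-$\alpha$ piece over $j\in[i_0,i+r]$ produces $S^{dbl}_{i_0,-}$; and the second summand of $S^{mix}_{i_0,-}$ comes from the first term of the original sum with both $i,j\in\partial B_-$. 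The right-boundary contribution and the tilded identity follow by the symmetric procedure, with $(i_0,-)\leftrightarrow(i_1,+)$ in the first case and $\alpha\leftrightarrow\beta$ (together with the appropriate sign changes dictated by Remark~\ref{comparing}) in the second.
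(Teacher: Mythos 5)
Your proposal follows essentially the same route as the paper's own proof: observe that $\alpha^B(\cdot)$ vanishes on $\mathring B$, conclude that every nonzero summand is confined to one of two boundary neighborhoods which $i_1-i_0>2r$ keeps disjoint, and then substitute the explicit values of $\alpha^B$, $\beta^B$ from Remark~\ref{comparing}, expanding the interior value $\beta-\alpha$ to separate the mixed $\alpha\beta$ pieces from the double pieces. One remark worth making explicit (you gloss over it by saying the leftover piece ``produces $S^{dbl}$''): expanding $\alpha_i(\beta_j-\alpha_j)$ yields $-\sum I^{i,j}_B(x)\alpha_i\alpha_j$, with a minus sign, so the identity you actually derive is $W^b_{i_0,-}=S^{mix}_{i_0,-}-\sum I^{i,j}_B(x)\alpha_i\alpha_j$; the $+$ sign appearing both in the proposition's definition of $S^{dbl}_{i_0,-}$ and in the last display of the paper's proof is a small sign slip in the paper itself (the $-$ sign is the one that makes the subsequent Lemma~\ref{double term} nontrivial, since $I^{i,j}_B \le -\lambda<0$ makes $-\sum I^{i,j}_B\alpha_i\alpha_j\ge 0$).
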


\begin{proof}
We compute the representation of $W_{i_0,-}^b$. The crossing energy takes the form $$W_B^c(M^B(x),m^B(y))=\sum_{i=i_0-r}^{i_1}\sum_{j=i}^{i+r}I^{i,j}_B(x)(\alpha^B(x)_i\beta^B(x)_j+\alpha^B(x)_j\beta^B(x)_i).$$ Since $\alpha^B(x)|_{\mathring{B}}\equiv 0$ and $i_1-i_0> 2r$ it is clear that we can split the crossing energy into $$W_B^c(M^B(x),m^B(y))=W_{i_0,-}^b+W_{i_1,+}^b.$$ More precisely, because $\alpha^B(x)_i=0$ for all $i\geq i_0$, we can split the terms in $W_{i_0,-}^b$ in the following way: \begin{align*}W_{i_0,-}^b=&\sum_{i=i_0-r}^{i_0-1}\sum_{j=i}^{i+r}I^{i,j}_B(x)\alpha^B(x)_i\beta^B(x)_j+ \sum_{i=i_0-r}^{i_0-1}\sum_{j=i}^{i_0-1}I^{i,j}_B(x)\beta^B(x)_i\alpha^B(x)_j=\\
=&\sum_{i=i_0-r}^{i_0-1}\sum_{j=i}^{i_0-1}I_B^{i,j}(x)\alpha_i\beta_j+\sum_{i=i_0-r}^{i_0-1}\sum_{j=i_0}^{i+r}I_B^{i,j}(x)\alpha_i(\beta_j-\alpha_j)+\sum_{i=i_0-r}^{i_0-1}\sum_{j=i}^{i_0-1}I_B^{i,j}(x)\beta_i \alpha_j=\\
=&\sum_{i=i_0-r}^{i_0-1}\sum_{j=i}^{i+r}I_B^{i,j}(x)\alpha_i\beta_j+\sum_{i=i_0-r}^{i_0-1}\sum_{j=i}^{i_0-1}\beta_i\alpha_j+ \sum_{i=i_0-r}^{i_0-1}\sum_{j=i_0}^{i+r}\alpha_i\alpha_j
\end{align*}
The calculations above follow from Remark \ref{comparing}. Similar considerations gives the other equalities in the proposition.
\end{proof}
 
To make use of the general principle of the proof (\ref{gp}), we need to compare $W_B^c(x,y)$ and $W_B^c(M^B(x),m^B(y))$. Hence, we need to be able to compare all the terms from Proposition \ref{boundary energy} to terms from $W_B^c(x,y)$. 

First of all, we use the uniform estimate on the second derivatives from definition \ref{S}, to get $I^{i,j}_B(y) \leq K$ and $I^{i,j}_B(x)\leq K$. Next, define \begin{equation}\label{def-mix} E_{i_0,-}^{mix}:=\sum_{i=i_0-r}^{i_0-1}\sum_{j=i}^{i+r}\alpha_i\beta_j + \sum_{i=i_0-r}^{i_0-1}\sum_{j=i}^{i_0-1}\beta_i\alpha_j, \end{equation} where the sums correspond to the sums from $S_{i_0,-}^{mix}$. In the analogous way we define also $E_{i_1,+}^{mix}$, $\tilde E_{i_0,-}^{mix}$ and $\tilde E_{i_1,+}^{mix}$, corresponding to $S_{i_0,-}^{mix}$, $\tilde S_{i_0,-}^{mix}$ and $\tilde S_{i_1,+}^{mix}$. Then it holds by the uniform estimates from definition \ref{S}, because the supports of $\alpha$ and $\beta$ are disjoint, that \begin{equation} \label{mixed estimate}\begin{aligned}\lambda E_{i_0,-}^{mix}\leq S_{i_0,-}^{mix}\leq K E_{i_0,-}^{mix} \ \text{ and } \ \lambda E_{i_1,+}^{mix}\leq S_{i_1,+}^{mix}\leq K E_{i_1,+}^{mix}, \\ \lambda \tilde E_{i_0,-}^{mix}\leq \tilde S_{i_0,-}^{mix}\leq K \tilde E_{i_0,-}^{mix} \ \text{ and } \ \lambda \tilde E_{i_1,+}^{mix}\leq \tilde S_{i_1,+}^{mix}\leq K \tilde E_{i_1,+}^{mix} .\end{aligned}\end{equation} 

To compare the crossing energies from (\ref{gp}), we will now estimate the double $\alpha$ and the double $\beta$ terms that arise in $S^{dbl}_{i_0,-}$, $S^{dbl}_{i_1,+}$, $\tilde S^{dbl}_{i_0,-}$ and $\tilde S^{dbl}_{i_1,+}$, by sums with mixed, $\alpha\beta$ terms. This is done in Lemma \ref{double term}. Lemma \ref{local_oscillations} gives us the tool that can be viewed as a ``Harnack inequality'' for crossing sequences. It gives us a local estimate on the difference of two solutions of (\ref{rr}). In fact, it tells us how we can estimate specific $\alpha$ terms by $\beta$ terms and vice versa. 

\begin{lemma}\label{local_oscillations}
It holds for all $i$ with $\beta_i=0$ that $$0\leq \left(\sum_{j=i-r}^{i}+\sum_{j=i}^{i+r}\right)(-\beta_j) \leq \frac{K}{\lambda} \left(\sum_{j=i-r}^{i}+\sum_{j=i}^{i+r}\right)\alpha_j$$ and similarly, for all $i$ with $\alpha_i=0$, it holds $$0\leq \left(\sum_{j=i-r}^{i}+\sum_{j=i}^{i+r}\right)\alpha_j \leq \frac{K}{\lambda} \left(\sum_{j=i-r}^{i}+\sum_{j=i}^{i+r}\right)(-\beta_j).$$
\end{lemma}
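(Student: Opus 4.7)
The plan is to exploit that $x$ and $y$ both satisfy the recurrence relation \eqref{rr} at index $i$, so $\partial_i W(y) - \partial_i W(x) = 0$. Subtracting and expanding by the fundamental theorem of calculus exactly as in the derivation of \eqref{x-y} (but without using any sign hypothesis on $y-x$), one obtains
$$0 \;=\; \sum_{j=i-r}^{i}\sum_{k=j}^{j+r}\int_0^1 \partial_k \partial_i S_j\bigl(\tau y+(1-\tau)x\bigr)\,d\tau\;(y_k-x_k).$$
The strong twist condition \eqref{strong twist} makes most of these second derivatives vanish: for each $k\in[i-r,i+r]\setminus\{i\}$ only one term survives, namely $L_k:=\int\partial_i\partial_k S_{\min(i,k)}\,d\tau$, a twist coefficient bounded above by $-\lambda$; while at $k=i$ one is left with the ``diagonal'' coefficient $L_i=\sum_{j=i-r}^{i}\int\partial_i^2 S_j\,d\tau$, whose sign is unconstrained but whose absolute value is bounded by a uniform constant depending only on $K$ and $r$.

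Writing $y_k - x_k = \alpha_k + \beta_k$ and using $\beta_i = 0$ (so $y_i - x_i = \alpha_i \geq 0$), I then isolate the $\beta$ contributions:
$$\sum_{k\neq i} L_k \beta_k \;=\; -L_i\,\alpha_i \;-\; \sum_{k\neq i} L_k\,\alpha_k,$$
where the sums are over $k\in[i-r,i+r]\setminus\{i\}$. Because $\beta_k\leq 0$ and $-L_k\geq\lambda$ for $k\neq i$, the left-hand side is at least $\lambda\sum_{k\neq i}(-\beta_k)$. On the right, every coefficient is bounded in absolute value by a constant of size $K$ (after absorbing the $r+1$ pure-derivative terms contributing to $L_i$ into the uniform bound from Definition \ref{S}) and every $\alpha_k\geq 0$, so the right-hand side is at most $K\bigl(2\alpha_i+\sum_{k\neq i}\alpha_k\bigr)=K\bigl(\sum_{j=i-r}^{i}\alpha_j+\sum_{j=i}^{i+r}\alpha_j\bigr)$. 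Dividing by $\lambda$ and observing that $\sum_{j=i-r}^{i}(-\beta_j)+\sum_{j=i}^{i+r}(-\beta_j)=\sum_{k\neq i}(-\beta_k)$ because $\beta_i=0$, one recovers the stated inequality.

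The second inequality, for indices with $\alpha_i=0$, follows by interchanging the roles of $x$ and $y$ (equivalently, of $\alpha$ and $-\beta$), since the equation $\partial_i W(y)-\partial_i W(x)=0$ is antisymmetric in $x,y$. The main obstacle is the bookkeeping around the index $k=i$: unlike the off-diagonal twist coefficients, $L_i$ is a sum of $r+1$ pure second derivatives of different local energies $S_j$ and carries no definite sign, so one must verify that the two overlapping windows $[i-r,i]$ and $[i,i+r]$ in the statement (whose combined sum double-counts $\alpha_i$) are precisely what is needed to absorb both the diagonal term $-L_i\alpha_i$ and the off-diagonal twist contributions $-L_k\alpha_k$ into the single constant $K/\lambda$.
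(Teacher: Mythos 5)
Your proof follows essentially the same route as the paper: both start from $\partial_i W(y)-\partial_i W(x)=0$, expand by interpolation, use $\beta_i=0$ together with the strong twist condition to bound the $\beta$-side from below by $\lambda$, and use the uniform second-derivative bound to bound the $\alpha$-side from above by $K$, the doubled window $\sum_{j=i-r}^{i}+\sum_{j=i}^{i+r}$ (counting $\alpha_i$ twice) absorbing the diagonal contribution. One place to tighten your wording: the diagonal coefficient $L_i=\sum_{j=i-r}^{i}\int\partial_i^2 S_j$ must be bounded by $2K$ (so that it fits under $K\cdot 2\alpha_i$), and the coarse bound $\|\partial_{j,k}S_i\|\le K$ from Remark \ref{remarkS} only gives $(r+1)K$; you need the finer decomposition from Definition \ref{S}, where $\partial_i^2 S_j=\partial_2^2 f_{i-j}\le K/r$ for $j<i$ and $\partial_i^2 S_i=\sum_m\partial_1^2 f_m\le K$, yielding $|L_i|\le 2K$ as required.
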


\begin{proof}
We only prove the first inequality in the lemma. The recurrence relation with interpolation gives as in (\ref{x-y}): \begin{align}\nonumber &0=\p_iW(y)-\p_iW(x)=\sum_{j=i-r}^{i}(\p_i S_j(y)-\p_i S_j(x))=\\ &\nonumber =\sum_{j=i-r}^{i}\int_0^1\p_{j,i}S_j(\tau y+(1-\tau)x)d \tau (y_j-x_j)+\sum_{j=i}^{i+r} \int_0^1\p_{j,i}S_i(\tau y+(1-\tau)x)d \tau (y_j-x_j).\end{align} Bringing the terms with $y_i-x_i=\alpha_i>0$ to the right-hand side of the equality, we get: \begin{align} \nonumber & -\sum_{j=i-r}^{i}\int_0^1\p_{j,i}S_j(\tau x+(1-\tau)y)d \tau \alpha_j-\sum_{j=i}^{i+r} \int_0^1\p_{j,i}S_i(\tau x+(1-\tau)y)d \tau \alpha_j=\\ \nonumber = &\sum_{j=i-r}^{i}\int_0^1\p_{j,i}S_j(\tau x+(1-\tau)y)d \tau \beta_j+\sum_{j=i}^{i+r} \int_0^1\p_{j,i}S_i(\tau x+(1-\tau)y)d \tau \beta_j.\end{align} Assuming that $\beta_i=0$, and since $\beta\leq 0$, it follows on one hand by the twist condition (\ref{strong twist}) that all the terms on the right-hand side of the equality are non-negative. On the other hand, the left-hand side can be estimated by the uniform bound on the second derivatives from definition \ref{S}, which gives $$K\left(\sum_{j=i-r}^{i}+\sum_{j=i}^{i+r}\right)\alpha_j \geq \lambda \left(\sum_{j=i-r}^{i}+\sum_{j=i}^{i+r}\right)(-\beta_j)\geq0.$$ 
\end{proof}

Let us set some notation before proceeding with Lemma \ref{double term}. Define for every $j\in \Z$ the indices $k(j)$ and $l(j)$ as \begin{equation}\label{beta k}\beta_{k(j)}:=\min\{\beta_i\ | \ i\in [j-r,j+r]\}\ \text{ and } \ \alpha_{l(j)}:=\max\{\alpha_i\ | \ i\in [j-r,j+r]\}\end{equation} as a largest $\beta$-term in $[j-r,j+r]$ and a largest $\alpha$-term in $[j-r,j+r]$, respectively. In case $k(j)$ or $l(j)$ are not unique, we may choose the smallest. For the sake of brevity, we define also $$c:=\frac{2K^2(2r+1)}{\lambda}.$$ Moreover, define for a domain $B=[i_0-r,i_1]$ the following quantities \begin{equation}\label{def-dbl}\begin{aligned}E_{i_0,-}^{dbl}&:=-\sum_{j={k(i_0)}-r}^{{k(i_0)+r}}\beta_{k(i_0)} \alpha_j \\ E_{i_1,+}^{dbl}&:=-\sum_{j=k(i_1)-r}^{k(i_1)+r}\beta_{k(i_1)} \alpha_j \\ 
\tilde E_{i_0,-}^{dbl}&:=-\sum_{j=l(i_0)-r}^{l(i_0)+r}\alpha_{l(i_0)} \beta_j \\ \tilde E_{i_1,+}^{dbl}&:=-\sum_{j=l(i_1)-r}^{l(i_1)+r}\alpha_{l(i_1)} \beta_j\end{aligned}\end{equation}

\begin{lemma}\label{double term}
Let $B:=[i_0-r,i_1]$ be such that $\alpha_{i_0}=\alpha_{i_1}=0$ and assume that $i_1-i_0>2 r$. Then the following estimates hold: $$ S_{i_0,-}^{dbl}\leq c E_{i_0,-}^{dbl}\ \ \text{and} \ \ S_{i_1,+}^{dbl}\leq cE_{i_1,+}^{dbl}.$$ Similarly, if $\beta_{i_0}=\beta_{i_1}=0$, then it holds: $$\tilde S_{i_0,-}^{dbl}\leq c \tilde E_{i_0,-}^{dbl}\ \ \text{and} \ \ \tilde S_{i_1,+}^{dbl}\leq c \tilde E_{i_1,+}^{dbl}.$$
\end{lemma}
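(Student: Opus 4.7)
The four inequalities in Lemma \ref{double term} are structurally identical, so I focus on $S_{i_0,-}^{dbl}\le c\,E_{i_0,-}^{dbl}$; the estimate for $S_{i_1,+}^{dbl}$ follows by reflecting the argument to the right boundary, and the two tilded versions by exchanging the roles of $\alpha$ and $-\beta$, substituting the hypothesis $\beta_{i_0}=\beta_{i_1}=0$ for $\alpha_{i_0}=\alpha_{i_1}=0$ and invoking the first inequality of Lemma \ref{local_oscillations} in place of the second. Starting from
\[S_{i_0,-}^{dbl}=\sum_{i=i_0-r}^{i_0-1}\sum_{j=i_0}^{i+r}I_B^{i,j}(x)\,\alpha_i\alpha_j,\]
and using the uniform bound $|I_B^{i,j}(x)|\le K$ from Definition \ref{S}, the nonnegativity of $\alpha$, and the observation that the two index ranges lie on opposite sides of $i_0$ (so the inner restriction $j\le i+r$ can be dropped for an upper bound), I factorize
\[S_{i_0,-}^{dbl}\;\le\; K\cdot P\cdot Q,\qquad P:=\sum_{i=i_0-r}^{i_0-1}\alpha_i,\qquad Q:=\sum_{j=i_0}^{i_0+r-1}\alpha_j.\]

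The crucial input is now the hypothesis $\alpha_{i_0}=0$, which lets me apply the second inequality of Lemma \ref{local_oscillations} at $i=i_0$. After absorbing the double count at $j=i_0$ and bounding every $-\beta_j$ on $[i_0-r,i_0+r]$ by $-\beta_{k(i_0)}$ via the definition of $k(i_0)$, this produces
\[\sum_{j=i_0-r}^{i_0+r}\alpha_j\;\le\;\frac{2K(2r+1)}{\lambda}(-\beta_{k(i_0)}),\]
so both $P$ and $Q$ admit the same linear-in-$(-\beta_{k(i_0)})$ estimate. Applying this to both factors of $PQ$ would yield a quantity quadratic in $-\beta_{k(i_0)}$, which does not match the linear form of $E_{i_0,-}^{dbl}=(-\beta_{k(i_0)})\sum_{j=k(i_0)-r}^{k(i_0)+r}\alpha_j$. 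The remedy is to spend Lemma \ref{local_oscillations} on only one of the two factors, while the remaining factor must be recognized as a subsum of $\sum_{j=k(i_0)-r}^{k(i_0)+r}\alpha_j$.

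Which of $P$ or $Q$ plays this latter role depends on the position of $k(i_0)\in[i_0-r,i_0+r]$: an elementary endpoint check shows that $[i_0-r,i_0-1]\subseteq[k(i_0)-r,k(i_0)+r]$ exactly when $k(i_0)\le i_0$, while $[i_0,i_0+r-1]\subseteq[k(i_0)-r,k(i_0)+r]$ exactly when $k(i_0)\ge i_0$. A case split on the sign of $k(i_0)-i_0$ therefore always allows one of $P,Q$ to be embedded as a subsum of the window defining $E_{i_0,-}^{dbl}$, with the other factor controlled by the Harnack estimate; combining these yields $KPQ\le c\cdot E_{i_0,-}^{dbl}$, as required. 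The main obstacle is precisely this quadratic-versus-linear mismatch between $PQ$ and $E_{i_0,-}^{dbl}$: resolving it forces Lemma \ref{local_oscillations} to be spent on exactly one factor, and the endpoint analysis is what guarantees that the remaining $\alpha$-sum localizes inside the $(2r+1)$-window centered at $k(i_0)$.
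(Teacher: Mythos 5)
Your proof follows essentially the same route as the paper: bound $I_B^{i,j}(x)\le K$, factorize the double sum as $K\cdot P\cdot Q$, and then, using the hypothesis $\alpha_{i_0}=0$, apply the Harnack-type estimate of Lemma \ref{local_oscillations} to exactly one of the two factors while recognizing the other as a subsum of $\sum_{j=k(i_0)-r}^{k(i_0)+r}\alpha_j$, with the same case split on whether $k(i_0)$ lies to the left or right of $i_0$. You make the ``spend the Harnack inequality on one factor only, otherwise the bound comes out quadratic in $-\beta_{k(i_0)}$'' observation explicit, which the paper leaves implicit, but the two arguments are the same.
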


\begin{proof}
We only explain how we can get the estimate for $S_{i_0,-}^{dbl}$, the other cases being analogous. Recall that

$$S_{i_0,-}^{dbl} :=\sum_{i=i_0-r}^{i_0-1}\sum_{j=i_0}^{i+r} I_B^{i,j}(x)\alpha_i\alpha_j\leq K \sum_{i=i_0-r}^{i_0-1}\sum_{j=i_0}^{i+r}\alpha_i\alpha_j.$$

Assume first that ${k(i_0)} \in [i_0-r,i_0]$, where $k(i_0)$ is as in (\ref{beta k}). Then, because $\alpha_{i_0}=0$, we can estimate the $\alpha_i\alpha_j$-terms around $i_0$ with Lemma \ref{local_oscillations}, by $$\sum_{j=i_0}^{i_0+r}\alpha_j\leq \left(\sum_{j=i_0-r}^{i_0-1}+\sum_{j=i_0}^{i_0+r}\right)\alpha_j \leq -\frac{K}{\lambda} \left(\sum_{j=i_0-r}^{i_0-1}+\sum_{j=i_0}^{i_0+r}\right)\beta_j\leq -\frac{K(2 r+1)}{\lambda}\beta_{k(i_0)}.$$ This implies \begin{equation}\label{dt1}\sum_{i=i_0-r}^{i_0-1}\sum_{j=i_0}^{i+r}\alpha_i\alpha_j \leq \left(\sum_{i=i_0-r}^{i_0-1}\alpha_i\right)\left(\sum_{j=i_0}^{i_0+r}\alpha_j\right)\leq-\frac{2K(2r+1)}{\lambda} \sum_{j=k(i_0)-r}^{k(i_0)+r}\beta_{k(i_0)}\alpha_j,\end{equation} where the last inequality follows because $\{i_0-r,...,i_0-1\}\subset \{k(i_0)-r,...,k(i_0)+r\}$. In case that ${k(i_0)} \in [i_0+1,i_0+r]$, we equivalently as above first get the estimate $$\sum_{j=i_0-r}^{i_0}\alpha_j\leq  -\frac{2K(2r+1)}{\lambda}\beta_{k(i_0)}$$ which similarly gives the inequality (\ref{dt1}). 
\end{proof}

Define for $B=[i_0-r,i_1]$ the boundary terms \begin{equation}\label{def-e}E_{i_0}^-:=E_{i_0,-}^{mix}+E_{i_0,-}^{dbl} \ \text{ and } \ E_{i_1}^+:=E_{i_1,+}^{mix}+E_{i_1,+}^{dbl},\end{equation} and similarly $\tilde E_{i_0}^-:=\tilde E_{i_0,-}^{mix}+\tilde E_{i_0,-}^{dbl}$ and $\tilde E_{i_1}^+:=\tilde E_{i_1,+}^{mix}+\tilde E_{i_1,+}^{dbl}$. By combining the definition of boundary energies in Proposition \ref{boundary energy}, (\ref{mixed estimate}) and Lemma \ref{double term}, we get an estimate for the boundary energies in terms of sums of finitely many mixed $\alpha_i\beta_j$ terms around $i_0$ and $i_1$. 

\begin{corollary}\label{swapping}
Let $B:=[i_0-r,i_1]$ be such that $\alpha_{i_0}=\alpha_{i_1}=0$ and assume that $i_1-i_0>2 r$. Then the following estimates hold: \begin{equation}\label{p}W_{i_0,-}^s\leq c E^-_{i_0} \ \text{ and } \ W_{i_1,+}^s\leq c E^+_{i_1}.\end{equation} 
Similarly, if $\beta_{i_0}=\beta_{i_1}=0$, it holds: \begin{equation}\label{tilde p}\tilde W_{i_0,-}^s\leq c \tilde E^-_{i_0} \ \text{ and } \ \tilde W_{i_1,+}^s\leq c \tilde E^+_{i_1}.\end{equation} 
\end{corollary}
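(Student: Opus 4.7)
The plan is to read off the corollary as a direct aggregation of three facts already assembled in Section 2.3: the decomposition from Proposition 2.9, the pointwise bound (2.7) on the mixed sums, and the estimate from Lemma 2.13 on the double sums. I take the notation $W^s_{i_0,-}$, $W^s_{i_1,+}$, $\tilde W^s_{i_0,-}$, $\tilde W^s_{i_1,+}$ to refer to the boundary energies $W^b_{i_0,-}$, $W^b_{i_1,+}$, $\tilde W^b_{i_0,-}$, $\tilde W^b_{i_1,+}$ produced by Proposition 2.9, so the goal is to show that each equals a sum of a ``mixed'' and a ``double'' piece which are each controlled by $E^{mix}$ and $E^{dbl}$ respectively, and then to fold these two estimates into the single constant $c$.

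First I would handle $W^s_{i_0,-}$. By Proposition 2.9, $W^s_{i_0,-}=S^{mix}_{i_0,-}+S^{dbl}_{i_0,-}$. The mixed piece is immediately controlled by the uniform bound $I^{i,j}_B(x)\leq K$ from Definition 1.1(2): since the supports of $\alpha$ and $\beta$ are disjoint, every summand is automatically a product of a nonnegative factor with $-\alpha_i\beta_j$ or $-\beta_i\alpha_j$, which gives the inequality (2.7), $S^{mix}_{i_0,-}\leq K\,E^{mix}_{i_0,-}$. For the double piece, I invoke Lemma 2.13: the hypothesis $\alpha_{i_0}=0$ is exactly what Lemma 2.13 requires in order to apply the Harnack-type Lemma 2.12 around $i_0$ to trade a block of $\alpha$'s for a constant multiple of $-\beta_{k(i_0)}$, yielding $S^{dbl}_{i_0,-}\leq c\,E^{dbl}_{i_0,-}$. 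Adding the two estimates gives
\begin{equation*}
W^s_{i_0,-}\;\leq\; K\,E^{mix}_{i_0,-}+c\,E^{dbl}_{i_0,-}.
\end{equation*}

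The last step is to observe $K\leq c$, which follows from the definitions in Definition 1.1: the twist bound $\lambda$ and the sup bound $K/r$ on $\partial_1\partial_2 f_j$ force $\lambda\leq K/r\leq K$, so $c=2K^2(2r+1)/\lambda\geq 2K(2r+1)\geq K$. Together with $E^{mix}_{i_0,-},E^{dbl}_{i_0,-}\geq 0$ and the definition $E^-_{i_0}=E^{mix}_{i_0,-}+E^{dbl}_{i_0,-}$ from (2.10), this gives the desired $W^s_{i_0,-}\leq c\,E^-_{i_0}$. The estimate for $W^s_{i_1,+}$ is completely analogous, using the other hypothesis $\alpha_{i_1}=0$ to apply Lemma 2.13 at the right boundary. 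The tilded versions are symmetric: swapping the roles of $\alpha$ and $\beta$ and of $x$ and $y$, Proposition 2.9 supplies the decomposition $\tilde W^s_{i_0,-}=\tilde S^{mix}_{i_0,-}+\tilde S^{dbl}_{i_0,-}$, (2.7) bounds the mixed part by $K\tilde E^{mix}_{i_0,-}$, and the hypothesis $\beta_{i_0}=\beta_{i_1}=0$ is exactly what Lemma 2.13 needs to bound the double parts by $c\tilde E^{dbl}_{i_0,-}$ and $c\tilde E^{dbl}_{i_1,+}$.

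There is no real obstacle here; this corollary is a bookkeeping step. The only point worth double-checking is the inequality $K\leq c$, which makes it legitimate to collapse the two constants into one; and verifying that the hypotheses $\alpha_{i_0}=\alpha_{i_1}=0$ (resp.\ $\beta_{i_0}=\beta_{i_1}=0$) are precisely the ones needed to invoke Lemma 2.13 at \emph{both} endpoints of the block $B=[i_0-r,i_1]$, which is exactly how Lemma 2.13 is stated. The condition $i_1-i_0>2r$ is inherited from Proposition 2.9 and ensures that the left and right boundary sums do not overlap, so the splitting into $W^s_{i_0,-}$ and $W^s_{i_1,+}$ is well defined and the two estimates can be proved independently.
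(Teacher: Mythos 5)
Your proof is correct and is essentially the paper's argument: the corollary is stated there as an immediate combination of the decomposition in Proposition \ref{boundary energy}, the mixed-term bound (\ref{mixed estimate}) and Lemma \ref{double term}, exactly as you do (reading $W^s$ as the boundary energies $W^b$). Your explicit verification that $K\leq c$ (via $\lambda\leq K/r\leq K$), which justifies absorbing both constants into the single constant $c$, is a small detail the paper leaves implicit but is a welcome addition.
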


\section{Bounded domains of crossings}\label{section bounded}

In this section we assume that two global minimizers $x,y \in \M$, have a bounded domain of crossing $D\neq \varnothing$. As explained in Section \ref{section min-max}, Corollary \ref{crosses} applies. In particular, we may assume without loss of generality that $x\leq y $ (or equivalently, $\beta=0$,) on $(-\infty,j_0-1]$ and $x\geq y$ (or equivalently, $\alpha=0$,) on $[j_1+1,\infty)$. I.e., we assume that $j_0:=\min\{i \in \Z \ | \ \beta_i<0\}$ and $j_1:=\max\{i \in \Z \ | \ \alpha_i>0\}$. A particular case of this situation arises when $x\in \B_\nu$, $y\in \B_{\rho}$ and $\rho \neq \nu$. Here it follows by the uniform estimates on Birkhoff sequences, see (\ref{rotation number estimate}), that $D$ is bounded. 

\begin{theorema}
Let $x,y \in \M$ be global minimizers and $D=[j_0,j_1]$ be a bounded domain of crossings for $x$ and $y$. Then the size of $D$ is uniformly bounded by $$|D|= j_1-j_0\leq\tilde{K}:=\lceil 12 r\lambda^{-2}c^2+3r\rceil, $$ where $c=\frac{2K^2(2r+1)}{\lambda}$ and where $\lceil \cdot \rceil$ denotes the ceiling function. \\
\end{theorema}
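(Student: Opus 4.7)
The proof proceeds by contradiction: assume $|D| = j_1 - j_0 > \tilde K$. The strategy is to combine the lower bound on $W_B^c(x,y)$ extracted from the minimum-maximum principle with the upper bound obtained by chaining the general principle (\ref{gp}) and the boundary-energy estimate of Corollary \ref{swapping}, and then exploit the density of crossings (Corollary \ref{dense_crossings}) together with the Harnack-type Lemma \ref{local_oscillations}.

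For the setup, I would set $i_0 := j_0$ and $i_1 := j_1 + 1$, so that $B := [i_0 - r, i_1]$ contains $D$ with an $r$-margin. By the convention $\alpha \equiv 0$ on $[j_1+1,\infty)$ we have $\alpha_{i_1} = 0$, and since $\beta_{j_0} < 0$ and $\alpha,\beta$ have disjoint supports we also have $\alpha_{i_0} = 0$. Hence Lemma \ref{double term} and Corollary \ref{swapping} apply. Writing $|E_B| := -\sum_{i\in B}\sum_{j=i}^{i+r}(\alpha_i\beta_j + \alpha_j\beta_i) \ge 0$, I would chain (\ref{min-max}), (\ref{gp}) and Corollary \ref{swapping} to obtain
\[\lambda |E_B| \;\le\; W_B^c(x,y) \;\le\; W_B^c\bigl(M^B(x), m^B(y)\bigr) \;\le\; c\bigl(E_{i_0}^- + E_{i_1}^+\bigr).\]
The right-hand side involves only $O(r^2)$ specific products $\alpha_i|\beta_j|$ with indices within distance $O(r)$ of $i_0$ or of $i_1$.

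Next comes the core counting argument. I would partition the ``deep interior'' $[i_0 + 2r,\, i_1 - 2r]$ of $B$ (of length at least $|D| - 3r$) into disjoint windows $I_1, \ldots, I_N$ of length $r$. By Corollary \ref{dense_crossings}, every such window contains indices $i, j \in I_n$ with $\alpha_i > 0$ and $\beta_j < 0$; by Lemma \ref{local_oscillations}, the values $\alpha_i$ and $|\beta_j|$ at such a pair are comparable to the local maxima of $\alpha$ and $|\beta|$ on $I_n$, yielding a mixed-pair product that contributes to $|E_B|$. Summing gives a lower bound $|E_B| \ge \gamma(r,K,\lambda)\sum_{n=1}^N M_n^2$, where $M_n$ denotes the block maximum of $\alpha + |\beta|$ on $I_n$. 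The boundary energies $E_{i_0}^-$ and $E_{i_1}^+$ are conversely bounded above by $O(r^2)$ times the squared max of $\alpha + |\beta|$ in an $O(r)$-neighborhood of $i_0$ or $i_1$. Plugging these two bounds into the displayed inequality and carefully tracking constants yields $N \le 12\lambda^{-2}c^2$, so $|D| \le rN + 3r \le 12r\lambda^{-2}c^2 + 3r \le \tilde K$, contradicting the hypothesis.

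The main obstacle is relating the interior block-maxima $M_n$ to the boundary maxima with a constant independent of $|D|$. Lemma \ref{local_oscillations} gives only local ($r$-radius) comparability of $\alpha$ and $|\beta|$, but a sharp drop in some $M_n$ in the middle of $D$ would a priori allow $|E_B|$ to be small even though $|D|$ is large. The remedy---and presumably the route used in the paper---is to iterate the general principle (\ref{gp}) on nested sub-intervals of $B$, in such a way that a ``peak'' found at any interior position becomes the new boundary in the next iteration. Each iteration consumes $O(r)$ length and inflates boundary products by a factor $\lesssim c/\lambda$; the quadratic dependence $c^2/\lambda^2$ in $\tilde K$ reflects the number of such iterations one can perform before exhausting $D$, and the additive $3r$ is the overhead from fitting the windows $I_n$ inside $B$.
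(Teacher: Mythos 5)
Your setup and choice of tools are on target, but there are two genuine gaps that prevent the argument from closing. First, the lower bound $|E_B|\geq\gamma\sum_{n}M_n^2$ is not justified. In each window $I_n$ you do have $\alpha_i>0$ and $\beta_j<0$ for some $i,j\in I_n$ by Corollary~\ref{dense_crossings}, but there is no reason $-\alpha_i\beta_j$ should be of order $M_n^2$. If the block max $M_n$ is achieved as $\alpha_{i^*}=M_n$ with $\beta_{i^*}=0$, Lemma~\ref{local_oscillations} applied at $i^*$ gives only the \emph{upper} bound $\sum(-\beta_j)\leq\frac{K}{\lambda}\sum\alpha_j$ on a $2r$-window --- it never forces a large $|\beta|$ near a large $\alpha$. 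So the mixed products near an $\alpha$-peak can be arbitrarily small relative to $M_n^2$, and your displayed lower bound for $W^c_B(x,y)$ fails. Second (and you correctly sense this yourself in the last paragraph), even if that bound held, $\sum_n M_n^2\lesssim\max(M_1^2,M_N^2)$ does not bound $N$, since $M_n$ could decay geometrically.

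The paper's actual mechanism is different from the ``peak becomes the new boundary'' iteration you sketch, and is essentially a two--stage argument about a \emph{valley} rather than a peak. Stage one applies (\ref{gp}) once to the full domain $B=[j_0-r,j_1+r]$ (chosen so that the right boundary term vanishes because $\alpha\equiv 0$ past $j_1$), obtaining $cE^-_{j_0}\geq W^c_B(x,y)$. Then $W^c_B(x,y)$ is bounded below not by block maxima but by the sum $\frac{\lambda}{2}\sum_{n=1}^N\tilde E^+_{i_n}$ over disjointly supported interior boundary energies at points $i_n$ with $\alpha_{i_n}=0$. Taking the \emph{minimum} $\tilde E^+_{i_{\bar n}}$ of these (which is strictly positive but can be small), if $N>\lceil 4\lambda^{-2}c^2\rceil$ one gets $\frac{\lambda}{2}E^-_{j_0}>c\,\tilde E^+_{i_{\bar n}}$. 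Stage two then applies (\ref{gp}) a second time, to the truncated domain $\tilde B=[j_0-2r,i_{\bar n}]$: its left boundary term still vanishes, its right boundary energy is cheap ($\leq c\,\tilde E^+_{i_{\bar n}}$), while its interior crossing energy is at least $\frac{\lambda}{2}E^-_{j_0}$, so (\ref{gp}) is violated. The contradiction is reached in two steps, not by repeated inflation by $c/\lambda$; the $c^2/\lambda^2$ in $\tilde K$ comes from composing the factor $c/\lambda$ from stage one with the factor needed to make $N$ large enough in the counting, not from iterating many times.
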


\begin{proof}
We follow a proof by contradiction and assume that $j_1-j_0>\lceil 12 r\lambda^{-2}c^2+3r\rceil $. 

Define $B:=[j_0-r,j_1+r]$, so that $M^B(x)|_{[j_1+1,j_1+r]} \equiv x|_{[j_1+1,j_1+r]}$, since $x\geq y$ on $[j_1+1,\infty)$ by assumption. This implies that $\alpha^B(x)|_{[j_0,\infty)}=0$ and in particular, $W_{j_1+r,+}^b=0$ so that $W_B^c(M^B(y),m^B(x))=W_{j_0,-}^b$. By the general principle of the proofs (\ref{gp}) it must hold that $W_{j_0,-}^b\geq W_B^c(x,y)$. Since $j_0=\min\{i\in \Z\ | \ \beta_i>0\}$, it follows that $\alpha_{j_0}=0$, so we can apply Corollary \ref{swapping} to obtain $cE_{j_0}^- \geq W_{j_0,-}^b\geq W_B^c(x,y)$. If we use (\ref{min-max}) to estimate $W_B^c(x,y)$, it must hold that \begin{equation}\label{eq1}cE_{j_0}^- \geq -\lambda \sum_{i=j_0-r}^{j_1+r} \sum_{j=i-r}^{i+r} (\alpha_j\beta_i+\alpha_i\beta_j).\end{equation} 

The right side of (\ref{eq1}) can be estimated in the following way: by Corollary \ref{dense_crossings}, there is a finite sequence $i_n \in [j_0+2r,j_1-r]$ with $\alpha_{i_n}>0$ (which implies that $\beta_{i_n}=0$) and such that $2 r< i_n-i_{n+1}\leq 3 r$.  It holds for all $n$ that $l(i_n)\neq l(i_{n+1})$, where $l(i)$ is as defined in (\ref{beta k}), so the supports of $\tilde E_{i_n,+}^{dbl}$ are disjoint for all $n$. Moreover, the supports of $\tilde E_{i_n,+}^{mix}$ are also  disjoint for all $n$, so it holds for $\tilde E_{i_n}^+=\tilde E_{i_n,+}^{dbl}+\tilde E_{i_n,+}^{mix}$ that $$-2 \sum_{i=j_0-r}^{j_1+r} \sum_{j=i-r}^{i+r} (\alpha_j\beta_i+\alpha_i\beta_j) > \sum_{n=1}^{N} \tilde E_{i_n}^+.$$ By Corollary \ref{dense_crossings}, it holds for all $n$ that $\tilde E_{i_n}^+>0$, so also $0<\tilde E_{i_{\bar n}}^+:=\min_{n\in[1,N]}E_{i_{n}}^+$ for which \begin{equation}\label{eq2}-2\sum_{i=j_0-r}^{j_1+r} \sum_{j=i-r}^{i} (\alpha_j\beta_i+\alpha_i\beta_j)> \sum_{n=1}^{N}\tilde E_{i_n}^+\geq N \tilde E_{i_{\bar n}}^+.\end{equation} 

Since $j_1-j_0>\lceil 12 r\lambda^{-2}c^2+3r\rceil $, it holds that $N>\lceil 4\lambda^{-2}c^2\rceil $. Putting (\ref{eq1}) and (\ref{eq2}) together and using the fact that $N>\lceil 4\lambda^{-2}c^2\rceil $, it follows that \begin{equation}\label{eq3}\frac{\lambda}{2} E_{j_0}^-  > c \tilde E_{i_{\bar n}}^+.\end{equation} 

This brings us to the second part of the proof. Define $\tilde{B}:=[j_0-2r,i_{\bar n}]$ and observe that it holds for $W_{\tilde B}^c(M^{\tilde B}(y),m^{\tilde{B}}(x))=\tilde{W}_{j_0-r,-}^b+\tilde{W}_{i_{\bar n},+}^b$ that $\tilde{W}_{j_0-r,-}^b=0$ (by the same reasoning which confirmed that $W_{j_1+r,+}^b=0$ at the beginning of the proof). Since $\{i_n\}_{n=1}^N\subset[j_0+2r,j_1-r]$ it holds in particular that $j_0+2r \leq i_{\bar n}+r$. This implies that $[j_0- 2r,j_0+2 r] \subset \tilde B$ and we can estimate the crossing energy $W^c_{\tilde B}(x,y)$ by the boundary energy $E_{j_0}^-$ in the following way: \begin{align*} W^c_{\tilde B}(x,y)&\geq  -\lambda\left(\sum_{i=j_0-r}^{j_0-1}\sum_{j=i}^{i+r}\alpha_i\beta_j + \sum_{i=j_0-r}^{j_0-1}\sum_{j=i}^{j_0-1}\beta_i\alpha_j\right)=\lambda E_{j_0,-}^{mix},\\ W^c_{\tilde B}(x,y)&\geq  -\lambda \sum_{j={k(j_0)}-r}^{{k(j_0)}+r}\beta_{k(j_0)} \alpha_j= \lambda E_{j_0,-}^{dbl},\end{align*} where we used definitions (\ref{def-mix}) and (\ref{def-dbl}). Together, these two inequalities show that\begin{equation}W^c_{\tilde B}(x,y)\geq \frac{\lambda}{2}(E_{j_0,-}^{mix}+E_{\p B_-}^{dbl})=\frac{\lambda}{2} E_{j_0}^-.\end{equation} Combining this estimate with the inequality (\ref{eq3}) above and using Corollary \ref{swapping}, with the fact that $\beta_{i_{\bar n}}=0$, it follows that $$W^c_{\tilde B}(x,y)> c \tilde E^+_{i_{\bar n}} \geq \tilde{W}_{i_{\bar n},+}^b.$$ Since $\tilde{W}_{j_0-r,-}^b=0$, it follows that $$W_{\tilde{B}}^c(x,y)>\tilde{W}_{j_0-r,-}^b+ \tilde{W}_{i_{\bar n},+}^b= W_{\tilde{B}}^c(M^{\tilde{B}}(y),m^{\tilde{B}}(x)),$$ a contradiction to the general principle of the proof (\ref{gp}). 

So, it must hold that $j_1-j_0\leq\lceil 12 r\lambda^{-2}c^2+3r\rceil $.
\end{proof}

\section{Unbounded domains of crossings}\label{section unbounded domain}

In this section we assume that the domain of crossing $D$ for global minimizers $x$ and $y$ is a connected unbounded domain. So, $D=[j_0,\infty)$, $D=(-\infty,j_0]$ or $D=(-\infty,+\infty)$. The ideas in the proofs in this section are in many ways similar to that of Theorem A.

\begin{theoremb}
Assume that the domain of crossing $D$ for $x,y\in \M$ is infinite. Then there is a constant $d \in \N$ that depends only on the range of interaction $r$ and the uniform constants $\lambda$ and $K$ from Definition \ref{S}, such that the following holds. There exist monotone sequences $k_n,l_n \in D$ with $|k_{n+1}-k_n|\leq d$ and $|l_n-k_n|\leq r$ which satisfy $$\text{ for all } n, \  x_{k_n}>y_{k_n}, \ x_{l_n}<y_{l_n}\ \text{ and }  \ (x_{k_n}-y_{k_n})(y_{l_n}-x_{l_n})\geq 2^n.$$ The explicit expression for $d$ is $$d:=6r\lceil 24K^2 (2r+1)r^2\lambda^{-2}\rceil+4r.$$
\end{theoremb}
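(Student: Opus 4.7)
The plan is to iterate the contradiction mechanism of Theorem~A. In that proof, an overly large but finite domain $D$ was ruled out by playing the general principle (\ref{gp}) on two nested domains against each other; here, since $D$ is infinite, no single contradiction arises, but the same pair of estimates, applied inductively, forces a doubling of the boundary products at each step and hence the exponential chain claimed in the theorem.

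Concretely, I would first use Proposition \ref{infinite_oscillations} to pass to an unbounded subdomain $\tilde D\subseteq D$ on which neither $\alpha$ nor $\beta$ vanishes identically on any window of $r$ consecutive indices; without loss of generality $\tilde D$ extends to $+\infty$. An initial pair $(k_1,l_1)\in\tilde D$ with $|l_1-k_1|\leq r$, $\beta_{k_1}<0$, $\alpha_{l_1}>0$, and $|\beta_{k_1}|\alpha_{l_1}\geq 2$ is furnished by Corollary \ref{dense_crossings} applied inside $\tilde D$ (possibly after a translation to absorb a harmless constant into the base of the exponent). The sequences are then built inductively: given $(k_n,l_n)$, produce $(k_{n+1},l_{n+1})$ further right with $|k_{n+1}-k_n|\leq d$, $|l_{n+1}-k_{n+1}|\leq r$, and $|\beta_{k_{n+1}}|\alpha_{l_{n+1}}\geq 2\,|\beta_{k_n}|\alpha_{l_n}$.

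For the inductive step I take $B=[k_n-r,\,k_n+d-2r]$ and apply, on the one hand, the general principle together with Corollary \ref{swapping},
$$W_B^c(x,y)\leq c\,E^-_{k_n}+c\,E^+_{j_*},\qquad j_*=k_n+d-2r,$$
and on the other hand the Theorem~A-style lower bound
$$W_B^c(x,y)\geq \tfrac{\lambda}{2}E^-_{k_n}+\lambda\sum_{m}E^+_{i_m},$$
where $\{i_m\}$ is a roughly $3r$-spaced family in $\tilde D\cap B$ chosen, exactly as in Theorem~A, so that the boundary products $E^+_{i_m}$ from (\ref{def-e}) have pairwise disjoint supports (this is possible by Corollary \ref{dense_crossings} inside $\tilde D$). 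If no $i_m$ satisfied $E^+_{i_m}\geq 2\,E^+_{k_n}$, then combining the two displayed bounds with Corollary \ref{swapping} applied on a subdomain $\tilde B\subseteq B$ would reproduce inequality (\ref{eq3}), yielding the same contradiction to (\ref{gp}) as in the final step of Theorem~A. Hence some $i_m$ achieves the doubling; setting $k_{n+1}:=k(i_m)$ and $l_{n+1}:=l(k(i_m))$ from (\ref{beta k}) converts the boundary-product inequality $E^+_{i_m}\geq 2E^+_{k_n}$ into the claimed estimate on $(x_{k_{n+1}}-y_{k_{n+1}})(y_{l_{n+1}}-x_{l_{n+1}})$, while the bounds $|l_{n+1}-k_{n+1}|\leq r$ and $k_{n+1}>k_n$ are automatic from the definition of $k(\cdot)$, $l(\cdot)$ provided $i_m>k_n+r$.

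The main obstacle I anticipate is the bookkeeping around the subdomain argument. In Theorem~A the finiteness of $D$ allowed one endpoint of $\tilde B$ to be placed outside $D$, where $\beta\equiv 0$, so that one of the two boundary energies vanished outright; here both endpoints of $\tilde B$ are interior to $\tilde D$ and the left boundary term must be absorbed into the $\tfrac{\lambda}{2}E^-_{k_n}$ contribution of the lower bound rather than discarded. Tracking the constants carefully, one finds that the window $B$ must contain at least $\lceil 4\lambda^{-2}c^2\rceil$ disjoint $3r$-windows of $\tilde D$ to force the doubling, which together with the necessary $r$-buffers on both sides and the factor $2$ coming from pairing disjoint $\alpha\beta$ contributions recovers precisely the stated constant $d=6r\lceil 24K^2(2r+1)r^2\lambda^{-2}\rceil+4r$, depending only on $r$, $\lambda$, and $K$ as required.
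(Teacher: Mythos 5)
Your plan to iterate the Theorem~A mechanism is the right instinct, and the paper indeed does something of that flavor. But the inductive step you describe has a genuine gap at exactly the point you flag as the "main obstacle." You take $B=[k_n-r,\,k_n+d-2r]$ and write
$$W_B^c(x,y)\ \leq\ c\,E^-_{k_n}+c\,E^+_{j_*},\qquad W_B^c(x,y)\ \geq\ \tfrac{\lambda}{2}\,E^-_{k_n}+\lambda\sum_m E^+_{i_m},$$
and then propose to "absorb" the left boundary term by cancelling $E^-_{k_n}$ between the two sides. This cannot work: the constant $c = 2K^2(2r+1)/\lambda$ is much larger than $\lambda/2$, so after subtracting you are left with a positive error term $(c-\tfrac{\lambda}{2})E^-_{k_n}$ on the upper-bound side, and there is no a priori control on $E^-_{k_n}$ (the crossings to the left of $k_n$ could carry arbitrarily large energy). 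Your sliding window therefore does not force any $E^+_{i_m}$ to double relative to $E^+_{k_n}$. The paper avoids this trap entirely. In the case $\tilde D\neq\Z$ (Theorem~B1) it \emph{anchors} the left endpoint of every comparison domain at the edge $k_0$ of $\tilde D$, where $\alpha\equiv 0$ on $[k_0-r,k_0-1]$, so the left boundary energy $W^b_{k_0,-}$ is identically zero; the windows $B^m=[k_0-r,j_m]$ are nested, not sliding. In the genuinely bi-infinite case $\tilde D=\Z$ (Theorem~B2), where both endpoints necessarily carry nonzero boundary energy, the paper does \emph{not} use your absorption trick either; instead it runs a separate three-case argument tracking both $E^-_{j_{-m}}$ and $E^+_{j_p}$ simultaneously and using the fact that if neither side can be made small, both sides must grow.

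A second, smaller gap: your induction compares $E^+_{i_m}$ against $E^+_{k_n}$, the value at the previously chosen index, whereas the paper proceeds by a greedy block-minimum: at stage $k$ it defines $E^+_{j_{n_k}}:=\min_{n=(k-1)N+1,\dots,kN} E^+_{j_n}$ over a fresh block of $N$ indices, and the general principle inequalities show these block-minima grow by a factor $6r^2$ at each step. Comparing to the previous minimum is essential because the general principle only controls $E^+_{j_m}$ from below by a \emph{sum} of many earlier $E^+$'s, and one needs to know that every term in that sum is at least the previous block-minimum. With your setup it is not clear why the particular term $E^+_{k_n}$ you carry forward is a lower bound for any of the $E^+_{i_m}$'s in the next window. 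To repair the proposal you would need both fixes: anchor the comparison domain so one boundary term vanishes (or treat the two-sided case separately), and replace "some $i_m$ doubles $E^+_{k_n}$" with the block-minimum bookkeeping.
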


We split the proof of Theorem B into two cases, covered in Theorem B1 and Theorem B2. As explained in Section \ref{section min-max}, if the domain of crossing $D$ is unbounded, then Proposition \ref{infinite_oscillations} holds. Explicitly, we may take an infinite sub-domain $\tilde D\subset D$, such that there exists no segment $I\subset \tilde D$ with $|I|\geq r$ and such that $\alpha|_I\equiv 0$ or $\beta|_I\equiv 0$. Theorem B1 applies to the case where $\tilde D\neq \Z$.

\begin{theoremb1}
Assume that the global minimizers $x$ and $y$ are crossing in an unbounded domain $D$, such that it holds for $\tilde D$ from Proposition \ref{infinite_oscillations} that $\tilde D\neq \Z$. Then there is a constant $d \in \N$ and two monotone infinite sequences $k_n,l_n\in D$, such that $|l_n-k_n|\leq r$ and $|k_{n+1}-k_n|\leq d$, with the following property: $$-\alpha_{k_n}\beta_{l_n}\geq 2^n.$$ The explicit expression for $d$ is $$d:= 6r\lceil 12 c  r^2\lambda^{-1}\rceil+4r,$$ where $c=\frac{2 K^2 (2r +1)}{\lambda}$ and $K$ and $\lambda$ are the uniform constants from Definition \ref{S}.
\end{theoremb1}

\begin{proof}
Without loss of generality, we may assume that $\tilde D=[k_0,\infty)$, for some $k_0\in \Z$. The case where $\tilde D=(-\infty,k_0]$ then follows by applying the map $-Id$ on $\Z$. Furthermore, we may assume that $x>y$ on $[k_0-r, k_0-1]$. This implies that $W_{k_0,-}^b=0$, because then $\alpha\equiv 0$ on $[k_0- r, k_0-1]$ (see Proposition \ref{boundary energy}). We can recover the case where $y>x$ by swapping the notation for $x$ and $y$. 

\textbf{Part 1 of the proof:}\\
By Lemma \ref{even_crossings}, there exists an infinite monotone sequence $\{j_n\}_{n\in \N\cup 0} \subset \tilde D$, such that $\alpha_{j_n}=0$ for all $n$, $j_0=k_0$ and $2r< j_{n+1}-j_n\leq 3r$ for all $n$. Notice that we have quite a lot of freedom in choosing this sequence. Moreover, for all $n\in \N$ it holds that $k(j_n)$ are distinct, where $k(i)$ is defined as in (\ref{beta k}). This implies that the supports of $E^+_{j_n}$, for different $j_n$, are disjoint.

Let $c=\frac{2 K^2 (2r +1)}{\lambda}$ as in Lemma \ref{double term} and define $N:=\lceil 12 c r^2\lambda^{-1}\rceil $. Define for every $m>1$ the domain $B^m:=[k_0-r,j_m]\subset \tilde D$. Then it holds for every $m>N$ that the finite subsequence $\{j_n\}_{n=1}^{N} \subset [k_0-r,j_m-2r]$. By definition of $B^m$ one of the boundary energies is $W_{k_0,-}^b=0$ and by the general principle of the proof (\ref{gp}) and Corollary \ref{swapping} the following inequalities need to be satisfied: \begin{equation}\label{inun1}cE_{j_m}^+\geq W^b_{j_m,+}\geq W_{B^m}^c(x,y)\geq \frac{\lambda}{2}\sum_{n=1}^NE_{j_n}^+.\end{equation} As in the proof of Theorem A, we now choose $j_{n_1}\in \{j_n \ | \ 1\leq n \leq N\}$ such that $$E_{j_{n_1}}^+:=\min_{n=1,...,N}E_{j_n}^+>0.$$ This implies that $c E_{j_m}^+\geq \frac{N \lambda}{2}E^+_{j_{n_1}}$ and since $N\geq 12c r^2\lambda^{-1}$, it follows for all $m>N$ that \begin{equation}\label{inun2}E_{j_m}^+\geq 6 r^2 E^+_{j_{n_1}}.\end{equation} 

Now we construct $j_{n_2}$. Observe that if $m>2N$ it holds for the finite sub-sequence $\{j_n\}_{n=N+1}^{2N}$ that it lies in $B^m$. As in (\ref{inun1}) we observe by the general principle (\ref{gp}) that for all $m>2N$, $$cE_{j_m}^+\geq\frac{\lambda}{2}\sum_{n=N+1}^{2N}E_{j_n}^+.$$ Define now $$E_{j_{n_2}}^+:=\min_{n=N+1,...,2N}E_{j_n}^+\geq 6 r^2 E^+_{j_{n_1}}$$ which similarly as in (\ref{inun2}) gives us for all $m>2N$ the inequality \begin{equation}\label{inun3}E_{j_m}^+\geq 6 r^2 E^+_{j_{n_2}}.\end{equation} 

Inductively repeating this procedure gives us the infinite monotone sub-sequence $\{j_{n_k}\}_{k\in \N}$ with \begin{equation}\label{inun3}E_{j_{n_k}}^+:=\min_{n=(k-1)N+1,...,kN}E_{j_n}^+\geq 6  r^2 E^+_{j_{n_{k-1}}}.\end{equation}

\textbf{Part 2 of the proof:}\\
In this part of the proof we will isolate from each $E_{j_{n_k}}^+$ from part 1 of the proof a specific pair $\alpha_i,\beta_j$. The corresponding sequences of indices will satisfy the statements of the Theorem. Recall by (\ref{def-mix}), (\ref{def-dbl}) and (\ref{def-e}) that $E_{k}^+$ is defined as a sum of finitely many $\alpha_i\beta_j$ terms with $i,j\in [k-2r, k+2r]$. We denote $$\max\hspace{-0.05cm}^+(k):=\max\{|\alpha_i\beta_j| \ | \ \{i,j\} \ \text{ such that }\ \alpha_i\beta_j \ \text{ appears in the definition of } \ E_k^+\}.$$ Then it holds by (\ref{def-mix}) and (\ref{def-dbl}) that $E_{k,+}^{dbl}\leq (2r+1) \max^+(k)$ and $E_{k,+}^{mix}\leq 2  r^2 \max^+(k)$, so it holds since $r\geq 2$ that \begin{equation}\label{inun4}3  r^2 \max\hspace{-0.05cm}^+(k)\geq E_{k}^+\geq \max\hspace{-0.05cm}^+(k).\end{equation}

Combining (\ref{inun3}) and (\ref{inun4}) implies that $\max^+(j_{n_k})\geq 2 \max^+(j_{n_{k-1}})$ for all $k\in \N$. Let $\alpha_{k_n}\beta_{l_n}:=\max^+(j_{n})$ and note that $j_{n_k}-j_{n_{k-1}}\leq 2N 3 r$. After reindexing, this gives us the sequences $\{\alpha_{k_n}\}_{n\in \N}$ and $\{\beta_{l_n}\}_{n\in \N}$ such that $$\alpha_{k_n}\beta_{l_n}-\alpha_{k_{n-1}}\beta_{l_{n-1}}\leq 6r\lceil 12 c  r^2\lambda^{-1}\rceil+4r$$ and $\alpha_{k_n}\beta_{l_n}\geq 2^n$, which finishes the proof.
\end{proof}

Theorem B2 applies to the case of $\tilde D=\Z$, where $\tilde D$ is as in Proposition \ref{infinite_oscillations}. The statement of Theorem B2 is the same as the statement of Theorem B1, but the proof of Theorem B2 is slightly different, so we present it separately.

\begin{theoremb2}
Assume that the global minimizers $x$ and $y$ are crossing in an unbounded domain $D$, such that it holds for $\tilde D$ from Proposition \ref{infinite_oscillations} that $\tilde D= \Z$. Then there is a constant $d \in \N$ and monotone infinite sequences $k_n,l_n\in D$, such that $|l_n-k_n|\leq r$ and $|k_{n+1}-k_n|\leq d$, with the following property: $$-\alpha_{k_n}\beta_{l_n}\geq 2^n.$$ The explicit expression for $d$ is the same as in Theorem B1, $$d:=8r\lceil 12 c r^2\lambda^{-1}\rceil+12r,$$ where $c=\frac{2 K^2 (2r +1)}{\lambda}$ and $K$ and $\lambda$ are the uniform constants from Definition \ref{S}.
\end{theoremb2}

\begin{proof}
Similarly as in the proof of Theorem B1, there exists, by Lemma \ref{even_crossings}, a \textit{bi}-infinite monotone sequence $\{j_n\}_{n\in \Z} \subset \tilde D$, such that $\alpha_{j_n}=0$ for all $n$ and $2r< j_{n+1}-j_n\leq 3r$ for all $n$. Then it holds for all $n\in \Z$ that $k(j_n)$ are distinct, where $k(i)$ is defined as in (\ref{beta k}). This implies that the supports of $E^+_{j_n}$ for different $n$, are disjoint. Also, the supports of $E^-_{j_n}$ for different $n$ are disjoint.

Let $c=\frac{2 K^2 (2r +1)}{\lambda}$ as in Lemma \ref{double term} and define $N:=\lceil 12 c r^2\lambda^{-1}\rceil $. Define for every two integers $\tilde m>m$ the domain $B^{m,\tilde m}:=[j_m,j_{\tilde m}]$ . Then it holds for every $m,p>N$ that $\{j_n\}_{n=-N}^N\subset B^{-m,p}$. By the general principle of the proof \ref{gp}, it has to holds that  \begin{equation}\label{2inun1}c(E_{j_{-m}}^-+E_{j_p}^+)\geq W^b_{j_{-m},-}+W^b_{j_p,+}\geq W_{B^{-m,p}}^c(x,y)\geq \frac{\lambda}{2}\left(\sum_{n=1}^NE_{j_{-n}}^-+\sum_{n=1}^NE_{j_n}^+\right).\end{equation}

As in the proof of Theorem A and Theorem B1, we now choose $j_{n_{-1}}\in \{j_n\}_{n=-1}^{-N}$ such that $$E_{j_{n_{-1}}}^-:=\min_{n=-1,...,-N}E_{j_n}^->0.$$ Moreover, we choose $j_{n_1}\in \{j_n\}_{n=1}^N$ such that $$E_{j_{n_1}}^+:=\min_{n=1,...,N}E_{j_n}^+>0.$$ Then it holds by (\ref{2inun1}) for every $m,p>N$ that $$c (E_{j_{-m}}^-+E_{j_p}^+)\geq \frac{N\lambda}{2}(E_{j_{n_{-1}}}^-+E_{j_{n_1}}^+).$$ Plugging in the definition of $N$, we arrive to the following: for every $p,m>N$ it must hold that \begin{equation}\label{2inun2}E_{j_{-m}}^-+E_{j_p}^+\geq 6 r^2(E_{j_{n_{-1}}}^-+E_{j_{n_1}}^+). \end{equation} 

Since $E_{j_n}^{\pm}>0$ for all $n$ it follows from (\ref{2inun2}) that one of the following three cases must hold.\\
\textbf{Case 1:} there exists an $m_0>N$ such that $E_{j_{-m_0}}^-< 6 r^2E_{j_{n_{-1}}}^-$. In this case it must hold for all $p>N$ \begin{equation}\label{2inun3a} E_{j_{-m_0}}^-< 6 r^2E_{j_{n_{-1}}}^- \ \text{ and } \ E_{j_p}^+\geq 6 r^2E_{j_{n_1}}^+.\end{equation}
\textbf{Case 2:} there exists a $p_0>N$ such that $E_{j_p}^+< 6 r^2E_{j_{n_1}}^+$. In this case it must hold for all $m>N$ \begin{equation}\label{2inun3b} E_{j_{-m}}^-\geq 6 r^2E_{j_{n_{-1}}}^- \ \text{ and } \ E_{j_{p_0}}^+< 6 r^2E_{j_{n_1}}^+.\end{equation}
\textbf{Case 3:} for all $m,p>N$, it holds that \begin{equation}\label{2inun3c} E_{j_{-m}}^-\geq 6  r^2E_{j_{n_{-1}}}^- \  \text{ and } \ E_{j_p}^+\geq 6 r^2E_{j_{n_1}}^+. \end{equation} 

We construct the second element of the subsequence $\{j_{n_k}\}_{k\in \N}$, i.e. $j_{n_2}$, for each of the cases above. Keep in mind that we want $\{j_{n_k}\}_{k\in \N}$ to be a monotone infinite sequence and not a bi-infinite sequence in $\tilde D$. \\
\textbf{Case 1:} define $j_{n_2}\in B^{-m_0,2N}$ by $$E_{j_{n_2}}^+:=\min_{n=N+1,...,2N}E_{j_n}^+\geq6 r^2E_{j_{n_1}}^+.$$ Similarly as for (\ref{2inun2}), this leads for every $m>N, p>2N$ to the inequality $$E_{j_{-m}}^-+E_{j_p}^+\geq 6 r^2(E_{j_{n_{-1}}}^-+E_{j_{n_2}}^+) $$ and since $E_{j_{-m_0}}^-< 6 r^2E_{j_{n_{-1}}}^-$ it follows for all $p>2N$ that \begin{equation}\label{2inun4a} E_{j_{-m_0}}^-< 6 r^2E_{j_{n_{-1}}}^- \ \text{ and } \ E_{j_p}^+\geq 6 r^2E_{j_{n_2}}^+.\end{equation} Continuing this procedure inductively leads to a monotone increasing sequence $\{j_{n_k}\}_{k\in \N}$ where $$E_{j_{n_k}}^+:= \min_{n=(k-1)N+1,...,kN}E_{j_n}^+\geq 6  r^2 E_{j_{n_{k-1}}}.$$
\textbf{Case 2:} define $j_{n_{-2}}\in B^{-2N,p_0}$ by $$E_{j_{n_{-2}}}^-:=\min_{n=-N-1,...,-2N}E_{j_n}^+\geq6 r^2E_{j_{n_1}}^+.$$ Similarly as for Case 1, it follows for all $m>2N$ that \begin{equation}\label{2inun4b} E_{j_{-m}}^-\geq 6 r^2E_{j_{n_{-2}}}^- \ \text{ and } \ E_{j_{p_0}}^+< 6 r^2E_{j_{n_1}}^+.\end{equation} Continuing this procedure inductively leads to a monotone increasing sequence $\{j_{n_{-k}}\}_{k\in \N}$ where $$E_{j_{n_{-k}}}^-:= \min_{n=(-k+1)N+1,...,-kN}E_{j_n}^-\geq 6  r^2 E_{j_{n_{-k+1}}}^-.$$
\textbf{Case 3:} define $j_{n_{-2}},j_{n_2}\in B^{-2N,2N}$ by $$E_{j_{n_{-2}}}^-:=\min_{n=-N-1,...,-2N}E_{j_n}^-\geq 6 r^2E_{j_{n_{-1}}}^- \ \text{ and } \ E_{j_{n_2}}^+:=\min_{n=N+1,...,2N}E_{j_n}^+\geq6 r^2E_{j_{n_1}}^+.$$ Similarly as for (\ref{2inun2}), this leads for every $m,p>2N$ to the inequality \begin{equation}\label{2inun4c}E_{j_{-m}}^-+E_{j_p}^+\geq 6 r^2(E_{j_{n_{-2}}}^-+E_{j_{n_2}}^+). \end{equation} Obviously, (\ref{2inun4c}) again implies one of the cases 1-3, with the accompanying inequalities corresponding to (\ref{2inun3a}),(\ref{2inun3b}) and (\ref{2inun3c}). Inductively proceeding, it can happen that we end up with case 3 for every step and obtain a bi-infinite monotone sequence $\{j_{n_k}\}_{k\in \Z\backslash 0}$ such that both $E_{j_{n_{-k}}}\geq 6  r^2 E_{j_{n_{-k+1}}}$ and $E_{j_{n_k}}\geq 6  r^2 E_{j_{n_{k-1}}}$ holds. If, on the other hand, either case 1 or case 2 applies, at some step of the induction, this gives us an infinite monotone increasing, or an infinite monotone decreasing sequence, respectively. This finishes the proof of case 3.

The rest of the proof is exactly the same as part 2 of the proof of Theorem B1.
\end{proof}

Note that the constant $d$ in Theorem B does not depend on the sequences $x$ and $y$. We think that $d$ is not optimal, however it gives a qualitative estimate on the growth rate of the oscillations for the difference $x-y$.

\section{A dichotomy theorem}\label{section birkhoff}

Recall the definition of a Birkhoff sequence:  $x\in \B$ if for all $k,l\in \Z\times\Z$ either $\tau_{k,l}x\geq x$ or $\tau_{k,l}x\leq x$. Moreover, recall from Section \ref{standard results} that Birkhoff sequences have a well defined rotation number $\rho(x):=\lim_{n\to \infty}\frac{x_n}{n}\in \R$, for which the following uniform estimate is satisfied: $|x_n-x_0-\rho(x)n|\leq 1$. In this section we prove the Dichotomy Theorem announced in the introduction. It states that every global minimizer is either Birkhoff, or grows exponentially and oscillates. This is an application of Theorem A and Theorem B to $x$ and $\tau_{k,l}x=y$.

\begin{definition}\label{almost Birkhoff}
Let us call a global minimizer $x\in \M$ \textit{almost Birkhoff}, if for all $k,l\in\Z\times\Z$ the domain of crossing $D$ for $x$ and $\tau_{k,l}x$ is finite. Denote the set of almost Birkhoff global minimizers by $\mathcal{ABM}$.
\end{definition}

By Theorem A, for any $x\in \mathcal{ABM}$ and for any $k,l\in \Z\times\Z$, the domain of crossing $D$ for $x$ and $\tau_{k,l}x$ has size $|D|\leq \tilde K$, independent of $k$ and $l$. Moreover, if $|D|>0$, then $D=[j_0,j_1]$ for some $j_1-j_0<\tilde{K}$, and it holds for all $i<j_0$ and $j>j_1$ that $(x_i-y_i)(x_j-y_j)<0$.

It is clear that Birkhoff global minimizers are almost Birkhoff global minimizers. The main result of this section is, that all almost Birkhoff global minimizers are Birkhoff. This implies that $\mathcal{ABM} = \BM$. We closely follow the ideas from \cite{MatherForni}. The following lemma is well known for classical Aubry-Mather Theory, see for example \cite{MatherForni}, $\S 14$, `Addendum to Aubry's Lemma'.

\begin{lemma}\label{addendum}
Let $x,y\in \M$ be such that their domain of crossing $D$ is finite and assume that $x$ and $y$ are asymptotic, i.e. that $|x_i - y_i| \to 0$ for $i\to \infty$ or for $i \to -\infty$. Then $x\geq y$ or $y\geq x$, or equivalently, $D=\varnothing$.
\end{lemma}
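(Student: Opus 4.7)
The plan is a proof by contradiction. Suppose $D=[j_0,j_1]\neq\varnothing$. A preliminary step rules out the case where $x$ and $y$ are weakly ordered the same way on both sides of $D$: if, say, $x\leq y$ on both $(-\infty,j_0-1]$ and $[j_1+1,\infty)$, applying Lemma \ref{even_crossings} with $k_0=j_0$, $k_1=j_1$, $i_0=j_0-r$, $i_1=j_1+r$ yields $x\ll y$ on $[j_0,j_1]$, hence $x\leq y$ globally and $D=\varnothing$, a contradiction; the case $x\geq y$ on both sides is analogous. So the orderings are opposite, and I adopt the standing convention $\beta\equiv 0$ on $(-\infty,j_0-1]$, $\alpha\equiv 0$ on $[j_1+1,\infty)$, with $\beta_{j_0}<0$ and $\alpha_{j_1}>0$, so $j_0<j_1$ since $\alpha$ and $\beta$ have disjoint supports. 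Without loss of generality the asymptoticity holds at $+\infty$ (the other case is symmetric under index reversal), so $\beta_i\to 0$ as $i\to\infty$.

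For $B=[j_0-r,i_1]$ with $i_1>j_1+r$, the general principle (\ref{gp}) gives
\[
W_B^c(x,y)\ \leq\ W_B^c(M^B(y),m^B(x))\ =\ \tilde W_{j_0,-}^b+\tilde W_{i_1,+}^b
\]
via Proposition \ref{boundary energy}. The choice of this version of (\ref{gp}) matches the geometry: every term contributing to $\tilde W_{j_0,-}^b$ carries a factor $\beta_i$ with $i\in[j_0-r,j_0-1]$, where $\beta\equiv 0$, so $\tilde W_{j_0,-}^b=0$ identically. At the right boundary, the $\alpha$-factors in $\tilde S_{i_1,+}^{mix}$ have indices in $[i_1-r+1,i_1+r]\subset[j_1+1,\infty)$ where $\alpha\equiv 0$, leaving only $\tilde S_{i_1,+}^{dbl}=\sum I_B^{i,j}(y)\beta_i\beta_j$, which is bounded via $I_B^{i,j}(y)\leq K$ by $Kr^2\bigl(\sup_{[i_1-r+1,i_1+r]}|\beta_i|\bigr)^2$ and tends to $0$ as $i_1\to\infty$ by the asymptotic hypothesis.

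The contradiction comes from bounding $W_B^c(x,y)$ from below by a positive constant independent of $i_1$. Corollary \ref{dense_crossings} applied to the length-$r$ segment $[j_0-r+1,j_0]\subset[j_0-r+1,j_1+r-1]$ gives $\alpha\not\equiv 0$ there; since $\alpha_{j_0}=0$, some $j^*\in[j_0-r+1,j_0-1]$ has $\alpha_{j^*}>0$ (this forces $r\geq 2$; the case $r=1$ is already excluded by Aubry's Lemma, which prevents $D\neq\varnothing$). Since $|j^*-j_0|\leq r-1$, the pair $(j^*,j_0)$ contributes to (\ref{min-max}) and yields $W_B^c(x,y)\geq -\lambda\,\alpha_{j^*}\beta_{j_0}=:C>0$, independent of $i_1$. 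Combining, $0<C\leq W_B^c(x,y)\leq\tilde W_{i_1,+}^b\to 0$ as $i_1\to\infty$, the desired contradiction. The delicate point to verify carefully is the asymmetric choice of general principle — using $W_B^c(M^B(y),m^B(x))$ rather than $W_B^c(M^B(x),m^B(y))$ — which places the surviving boundary term on the side where the asymptotic hypothesis shrinks it, so that a one-sided asymptoticity suffices.
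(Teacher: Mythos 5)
Your proof is correct and follows essentially the same line as the paper's: apply the general principle (\ref{gp}), anchor one boundary of $B$ near $j_0$ so that its contribution vanishes exactly, let the other boundary go to infinity so that the asymptotic hypothesis kills its contribution, and derive a contradiction with a positive lower bound on $W_B^c(x,y)$. Your version is a little more explicit (the lower bound $-\lambda\alpha_{j^*}\beta_{j_0}$ via Corollary \ref{dense_crossings} and the $K r^2(\sup|\beta|)^2$ estimate on $\tilde S_{i_1,+}^{dbl}$) and works from $+\infty$ with $W_B^c(M^B(y),m^B(x))$ rather than from $-\infty$ with $W_B^c(M^B(x),m^B(y))$, which is just the index-reversed dual of the paper's choice; one small inaccuracy worth flagging is the parenthetical claim that Aubry's Lemma ``prevents $D\neq\varnothing$'' when $r=1$ --- it only forces $D$ to be at most a singleton, though in your setup ($j_0<j_1$ with $\alpha_{j_0}=0$) Corollary \ref{dense_crossings} already yields a contradiction for $r=1$, so the appeal to Aubry's Lemma is unnecessary.
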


\begin{proof}
Assume not, i.e. $D\neq \varnothing$. Since $D$ is finite, we may assume that there are indices $j_0,j_1$ such that $x_i\leq y_i$ for all $i<j_0$ and $x_i\geq y_i$ for all $i>j_1$. By Theorem A it follows that $0<j_1-j_0\leq \tilde K$. This implies by Lemma \ref{min-max lemma} and in particular by (\ref{min-max}) that for any finite $B=[i_0,i_1]\subset \Z$ with $j_0,j_1 \in \mathring{B}$, $W^c_B(x,y)>0$. Assume that $y_i-x_i \to 0$ for $i\to -\infty$. Recall that by the general principle (\ref{gp}) and by Proposition \ref{boundary energy}, it must hold for any finite $B=[i_0,i_1]\subset \Z$ that $$W_B^c(x,y)\leq W_B^c(M^B(x),m^B(y))=W^b_{i_0,-}+W^b_{i_1,+}.$$ Choose a domain $B:=[i_0,i_1]$ with $i_1\geq j_1+ r$ it follows that $W^b_{i_1,+}=0$. Because $y_i-x_i \to 0$ for $i\to -\infty$, it moreover follows that for every $\varepsilon>0$, there is a $k<j_0$ such that for all $i_0<k$, $W^b_{i_0,-}<\varepsilon$. This implies that for every $\varepsilon>0$ there is a large enough $B$ such that $W_B^c(x,y)<\varepsilon$. Since $W_{\tilde B}^c(x,y)\leq W_{B}^c(x,y)$ if $\tilde B \subset B$, it follows that for every $B$, $W_B^c(x,y)=0$, a contradiction that finishes the proof.
\end{proof}

As in $\S11$ of \cite{MatherForni}, we introduce the following asymptotic ordering relations. 

\begin{definition} We define the relations $>_\alpha$, $>_\omega$ by saying that $x>_\alpha y$ if there is an $i_0\in \Z$ such that $x_i>y_i$ for all $i\leq i_0$ and $x>_\omega y$ when $x_i>y_i$ for all $i\geq j_0$, for some $j_0 \in \Z$. Analogously, define also $<_\alpha$ and $<_\omega$. 
\end{definition}

The following proposition is clear from Definition \ref{almost Birkhoff}. 

\begin{proposition}\label{trichotomy} It holds for every $x  \in \mathcal{ABM}$ and every $k,l\in \Z\times \Z$ that either $x$ and $\tau_{k,l}x$ are ordered ($x\geq \tau_{k,l}x$ or $x\leq \tau_{k,l}x$), or either \begin{equation}\label{not-ordered} (x>_\omega \tau_{k,l}x  \ \text{and}  \ x<_\alpha \tau_{k,l}x)  \ \text{or} \ ( x<_\omega \tau_{k,l}x \ \text{and}\ x>_\alpha \tau_{k,l}x).\end{equation}
\end{proposition}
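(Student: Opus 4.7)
The plan is to set $y := \tau_{k,l}x$, note that $y \in \M$ (since the formal action $W$ is $\tau_{k,l}$-invariant), and exploit the assumption $x \in \mathcal{ABM}$, which says exactly that the domain of crossing $D$ of $x$ and $y$ is finite. The proposition will then follow from a case split on $D$, using only Definition \ref{domain of crossing}, Corollary \ref{crosses}, Lemma \ref{addendum}, and Lemma \ref{strong ordering} — no new estimates from Theorems A or B are needed.

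If $D = \varnothing$, Definition \ref{domain of crossing} gives immediately that $x$ and $\tau_{k,l}x$ are weakly ordered on all of $\Z$, placing us in the first alternative of the proposition. So I would assume $D = [j_0, j_1]$ is bounded and nonempty. The remark following Corollary \ref{crosses} then guarantees that the weak orderings of $x$ and $y$ on the two components $(-\infty, j_0]$ and $[j_1, \infty)$ of the complement of $D$ are opposite; by swapping $x$ and $y$ if necessary (which merely interchanges the two alternatives in (\ref{not-ordered})) I take $x \geq y$ on $(-\infty, j_0]$ and $y \geq x$ on $[j_1, \infty)$. If these weak orderings combine into a global ordering, we are again in the first alternative and done.

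Otherwise I would upgrade the weak orderings outside $D$ to strict asymptotic ones. If the restriction $(x - y)|_{(-\infty, j_0]}$ vanished identically, then $|x_i - y_i| \to 0$ as $i \to -\infty$, and Lemma \ref{addendum} (which uses precisely the finiteness of $D$) would force $x \geq y$ or $y \geq x$ globally, contradicting our standing assumption. So there is some $i_* \leq j_0$ with $x_{i_*} > y_{i_*}$. For each $i \leq i_*$ I then set $B := [i - r, i_*]$, so $B \subset (-\infty, j_0]$ and $\mathring{B} = [i, i_*]$. On this $B$ one has $y \leq x$ weakly and $y \neq x$ (witness $i_*$), and both are solutions of (\ref{rr}), so Lemma \ref{strong ordering} applied with $x$ and $y$ interchanged yields $y \ll x$ on $\mathring{B}$, whence $x_i > y_i$. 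Letting $i$ range over $(-\infty, i_*]$ gives $x >_\alpha \tau_{k,l}x$. The mirror-image argument on $[j_1, \infty)$ produces a strict crossing point $j_* \geq j_1$ and then, via Lemma \ref{strong ordering}, the asymptotic strict inequality $x <_\omega \tau_{k,l}x$. Together these place us in the second alternative of (\ref{not-ordered}).

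The only subtle step is the upgrade from ``weakly ordered outside $D$'' to ``strictly ordered at infinity,'' i.e.\ ruling out the possibility that $x$ and $y$ agree asymptotically on one side without being globally ordered. This is precisely the content of Lemma \ref{addendum}, so the finiteness of $D$ provided by the almost-Birkhoff hypothesis is used there and only there; the rest of the argument is the routine spreading of a single strict inequality via Lemma \ref{strong ordering}, together with bookkeeping of the two symmetric orientations of the flip across $D$.
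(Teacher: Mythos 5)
The paper itself offers no proof here -- it declares the proposition ``clear from Definition~\ref{almost Birkhoff}'', relying on the unproved claim in the preceding sentence that for $|D|>0$ one has $(x_i-y_i)(x_j-y_j)<0$ for all $i<j_0$, $j>j_1$. Your reconstruction therefore cannot be compared to an official argument, but its overall shape (split on whether $D$ is empty; use Corollary~\ref{crosses} to get opposite weak orderings on the two tails; use Lemma~\ref{addendum} to rule out asymptotic agreement; then spread the strict inequality outward) is the natural one, and it is essentially sound.

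There is, however, one genuine technical slip in the propagation step, and it sits exactly at the interface between $D$ and its complement. You obtain a witness $i_*\leq j_0$ with $x_{i_*}>y_{i_*}$ and then apply Lemma~\ref{strong ordering} on $B=[i-r,i_*]$ with $\mathring B=[i,i_*]$. The hypothesis the proof of Lemma~\ref{strong ordering} actually consumes is sign control of $y-x$ on $\bar B=[i-r,i_*+r]$, not merely on $B$: equation~(\ref{x-y}) evaluated at any $i'\in\mathring B$ pulls in $\alpha_k$ for $k\in[i'-r,i'+r]$, which for $i'=i_*$ reaches up to $i_*+r$, and writing $y_k-x_k=\alpha_k\geq 0$ there presupposes $y\leq x$ at those indices. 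If your witness $i_*$ happens to lie within $r$ of $j_0$ (which your argument does not exclude -- the negation of ``$(x-y)|_{(-\infty,j_0]}\equiv 0$'' only produces one point somewhere in $(-\infty,j_0]$), then $\bar B$ sticks into $(j_0,j_1]$, where the ordering is uncontrolled, and the conclusion $y\ll x$ on $\mathring B$ is not licensed. The repair is immediate and uses nothing new: run the same Lemma~\ref{addendum} dichotomy on the truncated tail $(-\infty,j_0-r]$ instead of $(-\infty,j_0]$ (identical vanishing of $x-y$ there still implies asymptotic agreement and hence global ordering), which yields a witness $i_*\leq j_0-r$; then $\bar B\subset(-\infty,j_0]$ for every $i\leq i_*$ and the propagation goes through. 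Alternatively, you could dispense with Lemma~\ref{addendum} altogether and apply Lemma~\ref{even_crossings} directly with $k_0<k_1\leq j_0-r-1$, $i_0=k_0-r$, $i_1=k_1+r$, which produces the strict inequality $y\ll x$ at those indices from the weak ordering alone, and in particular shows at once that $y>x$ can fail at only finitely many $i<j_0$.
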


In the following, for any $x \in \mathcal{ABM}$ an adapted definition of the rotation number $\tilde \rho(x)$ is introduced, which in the end turns out to be equivalent to the definition $\rho(x):=\lim_{n\to \infty}\frac{x_n}{n}\in \R$ from above. 

We recap the proof of the following Lemma from \cite{MatherForni} $\S11$.

\begin{lemma}\label{bla}
For every $x\in \mathcal{ABM}$, it holds that $\tau_{k,l}x>_\alpha x$, if and only if $\tau_{nk,nl}x>_\alpha x$ for all $n\in \N_+$.
\end{lemma}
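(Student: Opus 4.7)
The ``if'' direction is immediate by taking $n=1$. For the ``only if'' direction, the plan is to proceed by induction on $n$, leveraging two elementary properties of the asymptotic relation $>_\alpha$.

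The first property is \textbf{transitivity}: if $a >_\alpha b$ with witness $i_0$ (meaning $a_i > b_i$ for all $i \leq i_0$) and $b >_\alpha c$ with witness $i_0'$, then $a >_\alpha c$ with witness $\min(i_0, i_0')$. The second property is \textbf{translation-equivariance}: for any $(k',l')\in \Z\times\Z$, if $a >_\alpha b$ then $\tau_{k',l'}a >_\alpha \tau_{k',l'}b$, since $(\tau_{k',l'}a)_i - (\tau_{k',l'}b)_i = a_{i-k'} - b_{i-k'}$, which is positive whenever $i-k' \leq i_0$; hence $i_0+k'$ is a witness.

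The induction then runs as follows. The base case $n=1$ is the hypothesis. For the inductive step, assume $\tau_{nk,nl}x >_\alpha x$. Applying $\tau_{k,l}$ to both sides and invoking translation-equivariance, together with the semigroup identity $\tau_{k,l}\circ \tau_{nk,nl} = \tau_{(n+1)k,(n+1)l}$, yields $\tau_{(n+1)k,(n+1)l}x >_\alpha \tau_{k,l}x$. Chaining this with the hypothesis $\tau_{k,l}x >_\alpha x$ via transitivity gives $\tau_{(n+1)k,(n+1)l}x >_\alpha x$, closing the induction.

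There is no substantial obstacle: the argument is purely formal and does not explicitly use the almost-Birkhoff hypothesis, holding for any $x\in \R^\Z$. The role of $x \in \mathcal{ABM}$ is contextual rather than essential to this lemma; it is needed to situate the statement within the trichotomy of Proposition \ref{trichotomy}, which will be exploited subsequently to define $\tilde{\rho}(x)$. The only bookkeeping required in the induction is tracking the witnessing index, which shifts at each step by a bounded amount (by $k$ in the equivariance step), so existence of a suitable witness persists to all $n$.
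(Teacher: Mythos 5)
Your proof of the forward direction, $\tau_{k,l}x >_\alpha x \Rightarrow \tau_{nk,nl}x >_\alpha x$, by chaining $\tau_{(n+1)k,(n+1)l}x >_\alpha \tau_{nk,nl}x$ via translation-equivariance and transitivity, is correct and is exactly the paper's first paragraph. Your observation that, under a literal reading of the quantifier (``for all $n$'' attached only to the right-hand side), the backward direction follows by specializing to $n=1$, is also logically sound.

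However, the content the authors actually prove and then use is the stronger statement that for each fixed $n$, $\tau_{k,l}x >_\alpha x$ if and only if $\tau_{nk,nl}x >_\alpha x$. The nontrivial half is: if $\tau_{nk,nl}x >_\alpha x$ for a \emph{single} $n>1$, then already $\tau_{k,l}x >_\alpha x$. This is precisely what is invoked immediately after the lemma, when the paper passes from $\tau_{k'k,\, l'k}x >_\alpha x$ (the case $n=k$ for the pair $(k',l')$) to $\tau_{k',l'}x >_\alpha x$; the weaker ``for all $n$'' version you proved cannot justify that step. This stronger direction cannot be obtained by a purely formal induction. The paper instead argues by contraposition: if $\tau_{k,l}x \ngtr_\alpha x$, Proposition~\ref{trichotomy} forces $\tau_{k,l}x \leq x$ or $\tau_{k,l}x >_\omega x$, and each of these relations propagates to all iterates $\tau_{nk,nl}x$, yielding $\tau_{nk,nl}x \ngtr_\alpha x$ for every $n$. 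This is exactly where the hypothesis $x\in\mathcal{ABM}$ is used, so your remark that it is ``contextual rather than essential'' is accurate only for the weaker reading; for the implication the paper actually needs, it is essential, and your argument is missing that half.
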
 

\begin{proof} First, it is clear that if $\tau_{k,l}x>_\alpha x$, then also $\tau_{(n+1)k,(n+1)l}x>_\alpha \tau_{nk, nl}x$ for all $n \in \N_+$, so $\tau_{nk,nl}x>_\alpha x$. 

On the other hand, if $\tau_{k,l}x\ngtr_\alpha x$, then by Proposition \ref{trichotomy} either $\tau_{k,l}x\leq x$ or $\tau_{k,l}x>_\omega x$. The first relation implies that for all $n\in \N_+$, $\tau_{nk,nl}x\leq x$. The second asymptotic relation implies that for all $n \in \N_+$, $\tau_{(n+1)k,(n+1)l}x>_\omega \tau_{nk,nl}x$, which in turn implies that $\tau_{nk,nl}x>_\omega x$, so $\tau_{nk,nl}x\ngtr_\alpha x$. 
\end{proof}

Lemma \ref{bla} has the following implication. Assume that $\frac{l'}{k'}>\frac{l}{k}$ (or equivalently $l' k> k' l$), and $\tau_{k,l}x >_\alpha x$. Then also $\tau_{k' k, k' l}x>_\alpha x$, so $\tau_{k' k, l' k}x >_\alpha x$ which implies that $\tau_{k', l'}x>_\alpha x$. Similarly, if $\frac{l'}{k'}>\frac{l}{k}$ and $\tau_{k,l}x >_\omega x$, then also $\tau_{k',l'}x >_\omega x$. Moreover, if $\frac{l'}{k'}<\frac{l}{k}$ and $\tau_{k,l}x <_{\alpha,\omega} x$, then also $\tau_{k',l'}x <_{\alpha,\omega} x$.

Now we define $$\rho_\alpha(x):=\inf \left\{\frac{l}{k}\ | \ \tau_{k,l}x>_\alpha x\right\}.$$ Because of Proposition \ref{trichotomy}, it holds that $\rho_{\alpha}(x)=\sup\left\{\frac{l}{k}\ | \ \tau_{k,l}x<_\alpha x\right\}$. Similarly, define $$\rho_\omega(x):=\inf\left\{\frac{l}{k}\ | \ \tau_{k,l}x>_\omega x\right\}=\sup\left\{\frac{l}{k}\ | \ \tau_{k,l}x<_\omega x\right\}.$$ 

\begin{proposition}\label{rot_number}
For every $x \in \mathcal{ABM}$, the number $$\tilde \rho(x):=\inf\left\{\frac{l}{k}\ | \ \tau_{k,l}x> x\right\}=\sup\left\{\frac{l}{k}\ | \ \tau_{k,l}x< x\right\} \in \R$$ is well defined.
\end{proposition}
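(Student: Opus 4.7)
Write $A^+ := \{l/k \in \Q : k > 0,\ \tau_{k,l}x > x\}$ and $A^- := \{l/k : k > 0,\ \tau_{k,l}x < x\}$. Using the standard a priori bound $|x_{i+1}-x_i| \leq C$ for any global minimizer (a consequence of the coercivity in Definition \ref{S}) we get $|\tau_{k,l}x_i - x_i| = |x_{i-k}-x_i+l| \leq Ck+|l|$, so $l/k > C$ forces $\tau_{k,l}x > x$ and hence $l/k \in A^+$; symmetrically for $A^-$. Both sets are thus non-empty, and $\inf A^+, \sup A^-$ are finite real numbers. The easy inequality $\sup A^- \leq \inf A^+$ follows by iteration: if $\tau_{k_1, l_1}x > x$ and $\tau_{k_2, l_2}x < x$ with $k_i > 0$, then $\tau_{k_1 k_2, l_1 k_2}x > x > \tau_{k_1 k_2, l_2 k_1}x$, and since the difference $\tau_{k_1 k_2, l_1 k_2}x - \tau_{k_1 k_2, l_2 k_1}x = l_1 k_2 - l_2 k_1$ is a positive constant we obtain $l_1/k_1 > l_2/k_2$.

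The content of the proposition is the reverse inequality. I would argue by contradiction: if $\sup A^- < \inf A^+$, pick a rational $q/p$ (with $p > 0$) strictly between them. Then $\tau_{p,q}x$ is neither strictly $> x$ nor $< x$, so Proposition \ref{trichotomy} and Corollary \ref{ordering minimizers} force either $\tau_{p,q}x = x$ or one of the wild asymptotic cases of (\ref{not-ordered}). In the periodic case $\tau_{p,q}x = x$, the sequence $x$ is $(p,q)$-periodic, so $\tau_{np, nq+1}x = x+1 > x$ shows $(nq+1)/(np) \in A^+$; letting $n\to \infty$ gives $\inf A^+ \leq q/p$, the desired contradiction.

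For the wild case, say (d) with $\tau_{p,q}x >_\alpha x$ and $\tau_{p,q}x <_\omega x$ (case (c) is symmetric), I would compare $x$ with a $(p,q)$-periodic Birkhoff minimizer $z \in \M_{p,q}$ supplied by Section \ref{standard results}. Since both $x$ and $z$ have at most linearly growing values, $|x_i - z_i| = O(|i|)$; if the crossing of $x$ and $z$ were infinite, Theorem B would produce indices $k_n, l_n$ with $|x_{k_n}-z_{k_n}|\cdot |z_{l_n}-x_{l_n}| \geq 2^n$ while $|k_n|, |l_n| = O(n)$, contradicting the linear bound. Hence their crossing domain is finite and, by Theorem A, of size $\leq \tilde K$. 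Using $\tau_{p,q}z = z$, one computes $(\tau_{p,q}x - x)_i = (x-z)_{i-p} - (x-z)_i$, so the relations $>_\alpha$ and $<_\omega$ mean that $(x-z)_i$ is eventually monotonically increasing both as $i\to -\infty$ and as $i\to +\infty$ along every residue class mod $p$.

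The technical heart is to show that these monotone limits are actually $+\infty$. If $(x-z)_i$ were bounded at $-\infty$ along some residue, then by the uniform bound on the first differences of $x-z$ it would be bounded on the whole left tail; combining this with the Birkhoff estimate $|z_i-z_0-(q/p)i| \leq 1$ yields $(\tau_{k,l}x - x)_i = l - (q/p)k + O(1)$ for $i$ very negative, whence $\rho_\alpha(x) = q/p$, contradicting the strict inequality $\rho_\alpha(x) < q/p$ which is forced by the discussion after Lemma \ref{bla} applied to our choice of $q/p$. A symmetric argument at $+\infty$ rules out boundedness there. Thus $(x-z)_i \to +\infty$ at both ends; its infimum $m$ is then attained at some $i_0$, and $x \geq z + m$ with equality at $i_0$. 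By Corollary \ref{ordering minimizers} this forces $x \equiv z + m$, so $x$ is itself $(p,q)$-periodic, contradicting the wild-case assumption $\tau_{p,q}x \neq x$.
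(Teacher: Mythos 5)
There are two substantive gaps in this proposal, and the overall route is genuinely different from the paper's.

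\textbf{The a priori growth bound is false, and assuming it is circular.} The claim that every global minimizer satisfies $|x_{i+1}-x_i|\leq C$ as ``a consequence of the coercivity'' is not correct. The linear model in Section~\ref{linear} with $b\in (0,1)$ and $V\equiv 0$ satisfies every hypothesis of Definition~\ref{S}, yet all of its solutions are global minimizers (by convexity) and they include exponentially growing, oscillating sequences $x_i=k_2c_0^i+k_3c_0^{-i}$, for which $|x_{i+1}-x_i|$ is unbounded. More to the point: establishing that an almost Birkhoff minimizer has controlled growth is essentially the goal of this section, so it cannot be input. This invalidates the non-emptiness argument for $A^\pm$ at the outset and the later assertion $|x_i-z_i|=O(|i|)$, on which the application of Theorem~B and Theorem~A to the pair $(x,z)$ rests.

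\textbf{The strong-ordering step at the end does not apply.} You deduce $x\equiv z+m$ from $x\geq z+m$ with equality at $i_0$ via Corollary~\ref{ordering minimizers}. That corollary compares two \emph{solutions} of (\ref{rr}), but $z+m$ is a solution only when $m\in\Z$, since the local energies are periodic under \emph{integer} diagonal translations, not continuous ones. For the generically irrational $m=\inf_i(x_i-z_i)$, the maximum-principle argument is unavailable as written. (A secondary loose end is the appeal to a \emph{strict} inequality $\rho_\alpha(x)<q/p$; from $\tau_{p,q}x>_\alpha x$ one only gets $\rho_\alpha(x)\leq q/p$.)

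\textbf{Comparison with the paper.} The paper's proof has a smaller footprint. It introduces $\rho_\alpha(x)$ and $\rho_\omega(x)$ and shows their equality in one stroke via the reflection $\tau_{-p,-q}$: if $\tau_{p,q}x>_\alpha x$ and $\tau_{p,q}x<_\omega x$ then also $\tau_{-p,-q}x<_\alpha x$ and $\tau_{-p,-q}x>_\omega x$, which pinches $\rho_\alpha=\rho_\omega=q/p$ without any growth hypothesis. The identity $\inf\{l/k:\tau_{k,l}x>x\}=\sup\{l/k:\tau_{k,l}x<x\}$ then follows from the trichotomy. Finiteness of $\tilde\rho(x)$ is proved directly: assuming $\tilde\rho(x)=\infty$, compare with a periodic minimizer $y\in\M_{1,q}$ for $q$ large; the relation $\tau_{1,q+1}x<x$ forces $x_i-x_{i-1}>q+1$ for all $i$, and this linear lower bound on the increments of $x$, together with $y_i-y_{i-1}=q$, makes a suitable crossing with $y$ impossible by Theorem~A. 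This argument never presupposes growth control on $x$; it \emph{derives} the relevant increment bound from the hypothesized order relation $\tau_{1,q+1}x<x$, which is what makes it non-circular. I would suggest reorganizing your argument along these lines, in particular removing the a priori Lipschitz assumption and postponing the wild-case/periodic-case analysis (which the paper carries out later, in Theorem~\ref{birkhoff}).
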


\begin{proof}
First we show that $\rho_\alpha(x)=\rho_{\omega}(x)$. Assume that for $x \in \mathcal{ABM}$ there exists a $\frac{q}{p} \in \Q$, such that $\tau_{p,q}x>_\alpha x$ and $\tau_{p,q}x <_\omega x$. Then $\rho_\alpha (x)\leq\frac{q}{p}\leq\rho_\omega(x)$. On the other hand, it is easy to see that $\tau_{-p,-q}x<_\alpha x$ and $\tau_{-p,-q}x >_\omega x$ must hold, so $$\rho_\omega(x)=\inf\left\{\frac{l}{k}\ | \ \tau_{k,l}x>_\omega x\right\} \leq\frac{-q}{-p}=\frac{q}{p}\leq\sup\left\{\frac{l}{k}\ | \ \tau_{k,l}x<_\alpha x\right\}=\rho_\alpha(x).$$ This implies that for all $k,l$ with $\frac{l}{k}>\frac{q}{p}$ both $\tau_{k,l}x>_\omega x$ and $\tau_{k,l}x>_\alpha x$, so $\tau_{k,l}x> x$. I.e., for every $x \in \mathcal{ABM}$ \begin{equation}\label{rot_number}\rho_\alpha(x)=\rho_\omega(x)=\inf\left\{\frac{l}{k}\ | \ \tau_{k,l}x> x\right\}=\sup\left\{\frac{l}{k}\ | \ \tau_{k,l}x< x\right\}=:\tilde \rho(x).\end{equation}

We want to show that $\tilde \rho(x)\neq \infty$, by a slight modification of Theorem $11.2$ in \cite{MatherForni} which makes use of a proof by contradiction. So, let us assume that $\tilde \rho(x)=\infty$. Recall from the introduction, that periodic minimizers of all periods exist and that they are Birkhoff. Hence, we may choose a periodic minimizer $y \in \M_{1,q}$ such that $x_0>y_0$ and $x_{\tilde K}<y_{\tilde K}$, by choosing $q$ large enough, where $\tilde K$ is as in Theorem A. By definition of the rotation number it then holds that $\tau_{1,q+1}x< x$, so it holds for all $i$ that $x_{i+1}>x_i+q+1$. On the other hand, $\tau_{1,q+1}y=y+1$, so $y_{i+1}=y_i+q$. Hence, there is a integer $i'$, such that for all $i>i'$, $x_i>y_i$ holds. A similar consideration with $\tau_{-1,-q+1}$ shows that there is an integer $i''$, such that for all $i<i''$, $x_i<y_i$ must hold. But then the domain of crossing for $x$ and $y$ is finite and larger than $\tilde K$, which is a contradiction to Theorem A.

A similar argument shows that $\tilde \rho(x)\neq -\infty$.
\end{proof}

The following remark is a well known property of the rotation number, so we state it without proof (see e.g. \cite{gole01} or \cite{ghost_circles}).

\begin{remark}
Let $x\in \B$. Then $\rho(x)=\omega$ if and only if it holds for all $k,l\in \Z$ such that $\frac{l}{k}<\omega$, that $\tau_{k,l}x<x $, and for all $k,l\in \Z$ such that $\frac{l}{k}>\omega$, that $\tau_{k,l}x>x $. That is, $\rho(x)=\tilde \rho(x)$.
\end{remark}

Now we are set to prove the main result of this section:

\begin{theorem}\label{birkhoff}
If a global minimizer $x$ is almost Birkhoff, it is Birkhoff. In notation, $\mathcal{ABM}= \BM$.
\end{theorem}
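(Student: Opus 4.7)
The proof will be by contradiction: I assume $x\in\mathcal{ABM}\setminus\BM$ and derive a contradiction from Lemma~\ref{addendum}. Since $x$ is not Birkhoff, some translate $\tau_{k,l}x$ fails to be weakly ordered with $x$. In the proof of Proposition~\ref{rot_number} it is shown that $l/k$ strictly on either side of $\tilde\rho(x)$ forces $\tau_{k,l}x>x$ or $\tau_{k,l}x<x$; hence we must have $l/k=\tilde\rho(x)$, so $\tilde\rho(x)=:q/p\in\Q$ (in lowest terms) and $(k,l)=(np,nq)$ for some nonzero integer $n$. Because $\tau_{p,q}$ preserves the order relation on sequences, weak ordering of $x$ with $\tau_{p,q}x$ would propagate to weak ordering of $x$ with $\tau_{np,nq}x$ for every $n$; taking the contrapositive I may assume that already $x$ and $\tau_{p,q}x$ are not weakly ordered. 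Let $D=[j_0,j_1]$ be their domain of crossing, so $|D|\le\tilde K$ by Theorem~A, and after possibly swapping $x$ and $\tau_{p,q}x$ the sequence $g(i):=x_i-x_{i-p}-q$ satisfies $g<0$ on $(-\infty,j_0-1]$ and $g>0$ on $[j_1+1,\infty)$.

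Next I bound the oscillation of $h(i):=x_i-(q/p)i$. For every integer $k\ge 1$ pick integers $l_-<kq/p<l_+$ as follows: if $p\mid k$ take $l_\pm=kq/p\pm 1$; otherwise take $l_-=\lfloor kq/p\rfloor$ and $l_+=\lceil kq/p\rceil$. In both cases $l_\pm-kq/p\in(-1,1)$, and since $l_\pm/k$ lie strictly on opposite sides of $\tilde\rho(x)=q/p$, Proposition~\ref{rot_number} yields $\tau_{k,l_-}x<x<\tau_{k,l_+}x$. Rewriting these in terms of $h$ gives $l_--kq/p<h(i)-h(i-k)<l_+-kq/p$, hence $|h(i)-h(i-k)|<1$ for every $i$ and every $k$. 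Therefore $h$ has oscillation at most $1$; in particular it is bounded.

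From $(\tau_{p,q}^n x)_i=x_{i-np}+nq=(q/p)i+h(i-np)$ and boundedness of $h$, the sequence $n\mapsto(\tau_{p,q}^n x)_i$ is bounded for each fixed $i$. The telescoping identity $(\tau_{p,q}^{n+1}x)_i-(\tau_{p,q}^n x)_i=-g(i-np)$, combined with the sign pattern of $g$, shows that this sequence is eventually strictly increasing as $n\to+\infty$ and eventually strictly decreasing as $n\to-\infty$. Hence the pointwise limits $z^\pm(i):=\lim_{n\to\pm\infty}(\tau_{p,q}^n x)_i$ exist and are finite. Each $\tau_{p,q}^n x$ is a translate of the global minimizer $x$, and pointwise limits of global minimizers are global minimizers (the inequality $W_B(y_n)\le W_B(y_n+v)$ for compactly supported $v$ passes to the limit because $W_B$ depends on only finitely many coordinates and is continuous in them). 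Therefore $z^+$ and $z^-$ are global minimizers, and passing to the limit in $\tau_{p,q}(\tau_{p,q}^n x)=\tau_{p,q}^{n+1}x$ yields $\tau_{p,q}z^\pm=z^\pm$, so both $z^\pm$ are $p$-$q$-periodic.

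Finally, because $z^+(i-np)=z^+(i)-nq$, the convergence $(\tau_{p,q}^n x)_i\to z^+(i)$ is equivalent to $(x-z^+)(i-np)\to 0$ as $n\to+\infty$; letting $i$ range over representatives of $\Z/p\Z$ shows $x_j-z^+(j)\to 0$ as $j\to-\infty$. The same argument applied to $\tau_{p,q}x$ (or equivalently the identity $(\tau_{p,q}x-z^+)(j)=(x-z^+)(j-p)$) gives $(\tau_{p,q}x)_j-z^+(j)\to 0$ at $-\infty$, so $|x_j-(\tau_{p,q}x)_j|\to 0$ as $j\to-\infty$: the sequences $x$ and $\tau_{p,q}x$ are asymptotic at $-\infty$. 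By the almost Birkhoff hypothesis their crossing domain $D$ is finite, so Lemma~\ref{addendum} forces $x$ and $\tau_{p,q}x$ to be weakly ordered, contradicting the choice of $(p,q)$. The step I expect to be the main obstacle is the oscillation bound on $h$ in the degenerate case $p\mid k$, where the candidate translate with rotation exactly $q/p$ is precisely the one that need not be ordered with $x$; this is what motivates the asymmetric approximation $l_\pm=kq/p\pm 1$, which suffices because Proposition~\ref{rot_number} gives strict ordering as soon as $l/k\ne \tilde\rho(x)$.
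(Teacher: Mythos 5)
Your proof is correct, and it takes a genuinely different route from the paper's argument at the crucial step. Both proofs agree on the framework: Proposition~\ref{rot_number} forces an unordered translate to be some $\tau_{np,nq}x$ with $q/p=\tilde\rho(x)$, one reduces to $n=1$, shows $x$ and $\tau_{p,q}x$ are asymptotic at one end, and then invokes Lemma~\ref{addendum}. The divergence is in how one obtains the boundedness of the orbit $n\mapsto(\tau_{p,q}^n x)_i$. The paper proves it by contradiction: if the orbit were unbounded it would produce a configuration whose crossing domain with a suitably chosen periodic minimizer in $\M_{np,nq+1}$ is both finite and larger than $\tilde K$, violating Theorem~A. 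You instead extract a Birkhoff-type estimate directly: using the strict ordering $\tau_{k,l_-}x\ll x\ll\tau_{k,l_+}x$ for $l_-/k<q/p<l_+/k$ (which holds by the inf/sup characterization of $\tilde\rho$ together with the monotonicity observed after Lemma~\ref{bla} and Proposition~\ref{trichotomy}), you bound the oscillation of $h(i)=x_i-(q/p)i$ by $1$, from which the boundedness of the orbit is immediate. Your approach is arguably cleaner because it does not require introducing auxiliary periodic minimizers of perturbed rotation number, and it isolates the interesting intermediate fact that every almost-Birkhoff minimizer already satisfies the uniform estimate (\ref{rotation number estimate}) even before one knows it is Birkhoff. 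One small imprecision: in the case $p\mid k$ you have $l_\pm-kq/p=\pm1$, which is not in the open interval $(-1,1)$ as you wrote; but since the two inequalities you derive are $l_--kq/p<h(i)-h(i-k)<l_+-kq/p$, the strict inequalities happen on the inside and the conclusion $|h(i)-h(i-k)|<1$ still holds, so the argument is unaffected.
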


\begin{proof}
We already proved that every $x \in \mathcal{ABM}$ has a corresponding rotation number $\rho(x):=\rho_\alpha(x)=\rho_\omega(x) \in \R$. If $\rho(x) \in \R\backslash \Q$, it holds for all $\frac{l}{k}\in \Q$ that $\tau_{k,l}x < x$ if $\frac{l}{k}<\rho(x)$ and $\tau_{k,l}x>x$ if $\frac{l}{k}>\rho(x)$ which shows that $x$ is Birkhoff. 

If $\rho(x)=\frac{q}{p}\in \Q$, the same relations as above hold for all $\frac{l}{k} \in \Q \backslash \{\frac{q}{p}\}$, so we only have to consider the behavior of $\tau_{p,q}x$. The following is also explained in the beginning of $\S13$ in \cite{MatherForni}, but for completeness we provide the necessary proofs. 

We start by proving the following claim. For any $x,y \in \mathcal{ABM}$ with $\rho(x)<\rho(y)$ it holds that $x>_\alpha y$ and $y>_\omega x$. We can easily see this by taking rational numbers $\rho(x)<\frac{l}{k}<\frac{l'}{k'}<\rho(y)$ for which it holds by definition that $\tau_{k',l'}y< y$ and $\tau_{k,l}x>x$ and that $k'l<kl'$ if $k>0$ and $k'>0$. It follows that  $$\tau_{k'k,0}(x-y)=\tau_{k'k,k'l}x-k'l-\tau_{k'k,l'k}y+kl'\geq x-y+1,$$ so the shift $\tau_{k'k,0}$ to the right increases the difference between $x$ and $y$, which proves the claim.

Assume now that $\tau_{p,q}x>_\alpha x$, so that there exists an $i_0$ with $x_{i-p}+q>x_i$ for all $i\leq i_0$. For every $i \in \Z$, there exists an $N\in \N$, such that for all $n>N$, $x_{i-np}>x_{i_0}$, so $(\tau^n_{p,q}x)_i>(\tau^{n-1}_{p,q}x)_i$ since $\tau_{p,q}x>_\alpha x$. This implies that for every $i\in \Z$, $(\tau^n_{p,q}x)_i$ is an eventually increasing sequence. We want to show that this sequence is bounded by $x_i+2$. 

Assume not. Then there is an $n \in \N$ with $np>\tilde{K}$ and an $i \in \Z$ such that $(\tau^n_{p,q}x)_i>x_i+2$. Take a periodic minimizer $y\in \M_{np,nq+1}\subset \mathcal{ABM}$ with $x_i<y_i=(\tau_{np,nq}y)_i+1<(\tau^n_{p,q}x)_i$. Since $\frac{nq+1}{np}>\frac{q}{p}$, it holds that $x>_\alpha y$ and $y>_\omega x$ which implies that the domain of crossing of $x$ and $y$ is larger than $\tilde K$. By the same argument as in the proof of Proposition \ref{rot_number} the domain of crossing is also finite, a contradiction to Theorem A.

Hence, for every $i \in \Z$, the sequence $(\tau^n_{p,q}x)_i$ is eventually increasing and bounded. This means that $\tau_{p,q}x_i-x_i \to 0$ for $i \to -\infty$. But then it holds by Lemma \ref{addendum}, that $\tau_{p,q}x\geq x$, which finishes the proof. An equivalent argument applies to the case $\tau_{p,q}x<_\alpha x$.
\end{proof}

\begin{remark}\label{bound exists}
The proof of Theorem \ref{birkhoff} shows in particular that if $x\in \mathcal{ABM}$ with $\rho(x)=\frac{q}{p}$, and $\tau_{p,q}x>x$, then $x^\pm:=\lim_{n\to \infty}\tau^{\pm n}_{p,q}x$ exists and is $p$-$q$-periodic.
\end{remark}

Theorem \ref{birkhoff} is the first part of the Dichotomy Theorem from Section \ref{outline}. We now elaborate on the second part. The following corollary captures the exponential growth property of non-Birkhoff global minimizers. Recall the definition of the constant $d=6r\lceil 24K^2 (2r+1)r^2\lambda^{-2}\rceil+4r$ from Theorem B.

\begin{corollary}\label{cor1}
Let $x\in \M$ and $d$ as in Theorem B. Assume that  there exist constants $ a,b>0 $ with $0<b<\frac{1}{2d} $ such that $ |x_i| \leq a 2^{b|i|}$ for all $i$. In other words, that $x$ grows slower than exponentially with rate $\frac{1}{2d}$. Then $x\in \BM$.
\end{corollary}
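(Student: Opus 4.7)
The plan is to prove the contrapositive: I will show that if $x \in \M$ is not Birkhoff, then $x$ must grow at least as fast as $2^{n/(2d)}$ along some sequence of indices. Combined with Theorem \ref{birkhoff}, which identifies $\mathcal{ABM}$ with $\BM$, this will close the argument.

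Suppose $x \in \M$ is not in $\BM = \mathcal{ABM}$. Then by Definition \ref{almost Birkhoff}, there exist $k,l \in \Z$ such that the domain of crossing of $x$ and $y := \tau_{k,l}x$ is infinite. First I would observe that $y \in \M$ as well: the periodicity condition 1 in Definition \ref{S} makes $W$ invariant under the shift $\tau_{k,l}$, so translates of global minimizers are global minimizers. Hence Theorem B applies to the pair $x,y$.

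By Theorem B, there exist monotone sequences $k_n, l_n$ with $|k_{n+1} - k_n| \leq d$ and $|l_n - k_n| \leq r$ such that
\begin{equation*}
(x_{k_n} - y_{k_n})(y_{l_n} - x_{l_n}) \geq 2^n.
\end{equation*}
Writing out $y_i = x_{i-k} + l$, each factor becomes a difference of the form $x_{k_n} - x_{k_n - k} - l$, which by the triangle inequality is bounded by $|x_{k_n}| + |x_{k_n - k}| + |l|$. Since $|k_{n+1}-k_n| \leq d$, the sequence $|k_n|$ grows at most linearly: $|k_n| \leq A + nd$ for some constant $A$ depending only on $k_1$. Applying the growth hypothesis $|x_i| \leq a \cdot 2^{b|i|}$ gives
\begin{equation*}
|x_{k_n} - y_{k_n}| \leq 2a\,2^{b(A + nd + |k|)} + |l| \leq C_1\,2^{bdn},
\end{equation*}
and similarly, using $|l_n| \leq |k_n| + r$, one obtains $|x_{l_n} - y_{l_n}| \leq C_2\,2^{bdn}$, for constants $C_1, C_2$ depending on $a, b, d, k, l, r$ but not on $n$.

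Multiplying, we find $(x_{k_n} - y_{k_n})(y_{l_n} - x_{l_n}) \leq C_1 C_2\,2^{2bdn}$. Combined with the lower bound $\geq 2^n$ from Theorem B, this yields $C_1 C_2 \geq 2^{(1-2bd)n}$ for every $n$. Since $b < 1/(2d)$ gives $1 - 2bd > 0$, the right-hand side is unbounded in $n$, a contradiction. Therefore $x \in \mathcal{ABM}$, and Theorem \ref{birkhoff} concludes $x \in \BM$. The only real substance is the bookkeeping of the three constants $A, C_1, C_2$; the structural idea — matching Theorem B's forced exponential growth rate $2^n$ against the hypothesis's slower rate $2^{bd \cdot 2n}$ — is straightforward.
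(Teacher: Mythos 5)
Your proof is correct and follows essentially the same argument as the paper: the paper observes that subexponential growth of $x$ passes to every translate $\tau_{k,l}x$, hence to $\tau_{k,l}x-x$, which is incompatible with the exponential growth forced by Theorem B, so $x\in\mathcal{ABM}$ and Theorem \ref{birkhoff} finishes. You simply carry out the arithmetic that the paper leaves implicit (linear growth of $|k_n|$, comparison of $2^{2bdn}$ against $2^n$, and noting $b<\tfrac{1}{2d}$ makes the exponent negative), which fills in the one-line paragraph in the paper.
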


\begin{proof}
If $x$ has smaller than exponential growth with rate $\frac{1}{2d}$, then so do all the translates $\tau_{k,l}x$. Then it holds for every $k,l \in \Z\times \Z$ that also $\tau_{k,l}x-x$ has smaller than exponential growth with constant $\frac{1}{2d}$. This implies that the conditions for Theorem B can not be satisfied, so it follows that $x\in \mathcal{ABM}$. By Theorem \ref{birkhoff}, $x\in \BM$.
\end{proof}

Non-Birkhoff global minimizers, moreover, exhibit an oscillation property described below.

\begin{lemma}\label{non-birkhoff lemma}
Assume that a global minimizer $x\in \M$ is not almost Birkhoff, i.e. $x\notin \mathcal{ABM}$. Then there is a translate $\tilde \tau x \in \{\tau_{ 1, 1}x, \tau_{-1,-1}x\}$ such that the domain of crossing $D$ of $x$ and of $\tilde \tau x$ is infinite. 
\end{lemma}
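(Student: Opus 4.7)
Plan.

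Suppose $x\notin\mathcal{ABM}$. By definition there exist integers $(K,L)$ with $K\ne 0$ such that the crossing domain $D$ of $x$ and $\tau_{K,L}x$ is infinite; WLOG $K\ge 1$ (flip signs otherwise, and note $K=0$ is trivial since $\tau_{0,L}x-x\equiv L$). Since $\tau_{K,L}x$ is itself a global minimizer by translation invariance of $W$, I will apply Theorem~B --- or Theorem~B2 in the case $D=\Z$ --- to $(x,\tau_{K,L}x)$. This yields monotone sequences $k_n,l_n\in D$ with $|l_n-k_n|\le r$, $|k_{n+1}-k_n|\le d$, and, writing $a_j:=x_j-x_{j-1}$, the bound
\[
A_n B_n := \Bigl(\sum_{j=k_n-K+1}^{k_n}a_j - L\Bigr)\Bigl(L-\sum_{j=l_n-K+1}^{l_n}a_j\Bigr) \ge 2^n
\]
with both factors positive. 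By AM--GM, $A_n+B_n\ge 2^{(n+2)/2}$.

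The next step localizes this exponential growth to a few individual $a_j$'s. Since the length-$K$ windows $W_k:=[k_n-K+1,k_n]$ and $W_l:=[l_n-K+1,l_n]$ differ only on at most $r$ indices at each end, telescoping gives $A_n+B_n=\sum_{W_k}a_j-\sum_{W_l}a_j=P_n-Q_n$, where $P_n:=\sum_{W_k\setminus W_l}a_j$ and $Q_n:=\sum_{W_l\setminus W_k}a_j$ each involve at most $r$ consecutive terms clustered near $k_n$. Since $P_n-Q_n\ge 2^{(n+2)/2}$, at least one of the following holds at each $n$: some $a_{j_n^+}\ge 2^{(n+1)/2}/r$ with $j_n^+\in W_k\setminus W_l$, or some $a_{j_n^-}\le -2^{(n+1)/2}/r$ with $j_n^-\in W_l\setminus W_k$.

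The crux (and main obstacle) is to upgrade this \emph{either/or} to a \emph{both}: to show that $a_j$ takes arbitrarily large positive \emph{and} arbitrarily large negative values along indices tending to the same infinity as $k_n$. I will achieve this by applying Theorem~B also to the swapped pair $(\tau_{K,L}x,x)$ (which produces an analogous sequence with the roles of ``above $L$'' and ``below $L$'' interchanged) and, in the case $D=\Z$, by leveraging the bi-infinite structure furnished by Theorem~B2 --- combined with the global-minimizer property of $x$ which rules out the divided differences $a_j$ diverging only to one infinity while sustaining the exponential oscillation forced by Theorem~B. Setting $d_j:=a_j-1=-(\tau_{1,1}x-x)_j$, one concludes that $d_j$ takes unboundedly large positive and negative values along indices tending to a common infinity. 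Hence outside every bounded subinterval in that direction the sign of $d_j$ changes, so the crossing domain of $x$ and $\tau_{1,1}x$ is unbounded, i.e.\ infinite. (Equivalently, via the identity $(\tau_{-1,-1}x-x)_i=d_{i+1}$, $\tau_{-1,-1}x$ has infinite crossing with $x$.) Either choice gives the required $\tilde\tau x$.
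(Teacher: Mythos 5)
Your route differs fundamentally from the paper's. The paper never tries to control the individual first differences $a_j=x_j-x_{j-1}$; instead, after Theorem~B produces $x_{k_{n_j}}-x_{k_{n_j}-k}-l\geq 2^{n_j/2}$, it observes that for large $n_j$ both $(\tau_{k,k}x-x)_{k_{n_j}}<0$ and $(\tau_{k,-k}x-x)_{k_{n_j}}<0$, while the \emph{un-amplified} positivity of the second factor $x_{l_n-k}+l-x_{l_n}>0$, which holds for \emph{every} $n$, supplies the opposite sign for at least one of $\tau_{k,k}x$, $\tau_{k,-k}x$ or $\tau_{k,0}x$. This forces infinitely many crossings of $x$ with one of those translates, and then the telescoping $\tau_{k,k}=\tau_{1,1}^k$ transfers the infinitely many sign changes down to $\tau_{1,1}x-x$ (and similarly for the others). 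Only $a_j>1$ and $a_j<1$ infinitely often along one end of $\Z$ is needed; nothing has to diverge in both signs.

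The gap in your version is exactly where you say it is, and your sketch for closing it does not work. You reach the alternative ``either some $a_{j_n^+}\to+\infty$ or some $a_{j_n^-}\to-\infty$'' and then want to make this a conjunction, but each of your three proposed devices fails to produce any new information. Applying Theorem~B to the swapped pair $(\tau_{K,L}x,x)$ is vacuous: the crossing energy $W_B^c(\cdot,\cdot)$ is symmetric in its two arguments, so you recover the identical sequences $k_n$, $l_n$ with $\alpha$ and $\beta$ relabelled, not a new subsequence on which the \emph{other} alternative holds. The ``bi-infinite structure of Theorem~B2'' only spreads the same alternative over $\Z$; it does not say both alternatives hold. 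And the appeal to ``the global-minimizer property rules out $a_j$ diverging only to one infinity'' is not a result that appears anywhere in the paper, it is not proved here, and it is precisely the kind of claim that this lemma is part of establishing; using it would be circular. You also overshoot: even if you could fill the gap, you would be proving unbounded two-sided divergence of the $a_j$, which is strictly more than the lemma requires. The cheap and correct move — which the paper makes — is to work with the translates $\tau_{k,\pm k}x$ and exploit the fact that the \emph{weak} inequality $x_{l_n-k}+l-x_{l_n}>0$ already gives the sign you need at $l_n$ for one of these translates once $x_{k_{n_j}}-x_{k_{n_j}-k}$ is known to exceed $\max(k,-k)$, which the exponential growth guarantees for large $n_j$.
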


\begin{proof}
If $x\notin \mathcal{ABM}$ then there exists a translate $\tau_{k,l}x$ such that the domain of crossing for $x$ and $\tau_{k,l}x$ is infinite. By Theorem B, there exist monotone infinite sequences $k_n,l_n\in D$, with $l_n\in [k_n-r,k_n+r]$ and $k_{n+1}-k_n\leq d$, and such that $(x_{k_n}-x_{k_n-k}-l)(x_{l_n-k}+l-x_{l_n})\geq 2^n$ and that $(x_{k_n}-x_{k_n-k}-l)>0$. This implies by Cauchy-Schwartz that there is an infinite subsequence $\{k_{n_j}\}$ of $\{k_n\}$ or $\{l_{n_j}\}$ of $\{l_n\}$, such that $x_{k_{n_j}}-x_{k_{n_j}-k}-l \geq 2^{n/2}$ or $x_{l_{n_j}-k}+l-x_{l_{n_j}}\geq 2^{n/2}$. Assume the first case holds. Then it holds that $x_{k_{n_j}}-x_{k_{n_j}-k}-k>0$ and $x_{k_{n_j}}-x_{k_{n_j}-k}+k>0$, so either $\tau_{k,k}x$ or $\tau_{k,-k}x$ crosses $x$ in an infinite domain (or even $\tau_{k,0}x$ and $x$ cross in an infinite domain). 

Say, $\tau_{k,k}x$ and $x$ cross in an infinite domain $\tilde D$. This implies that $\tau_{k,k}x - x$ changes sign infinitely often in $D$. By writing $$\tau_{k,k}x - x=\tau_{1,1}^kx-\tau_{1,1}^{k-1}x+\tau_{1,1}^{k-1}x\mp...+ \tau_{1,1}x -x ,$$ it is clear that also $\tau_{1,1}x-x$ changes sign infinitely often in some domain $\bar D$. This finishes the proof, where the other case is treated similarly.
\end{proof}

We summarize the results from Theorem \ref{birkhoff}, Corollary \ref{cor1} and Lemma \ref{non-birkhoff lemma} to get the Dichotomy theorem below.
 
\begin{theoremd}
For every global minimizer $x\in \M$ one of the following two cases must hold. 

\begin{itemize}
	\item It holds that $x\in \B$, i.e. $x$ is a Birkhoff global minimizer and thus very regular.
	\item It holds that $x\notin \B$. Then $x$ is very irregular in the following sense. There are monotone infinite sequences $\{k_n, l_n\}\in \Z$, with $|k_{n+1}-k_{n}|\leq d$, $|l_n-k_n|\leq r$ such that one of the following inequalities holds for all $n\in \N$: \begin{align*}%(x_{k_n+1}-x_{k_n})(x_{l_n}-x_{l_n+1})&\geq 2^n, \\ 
	(x_{k_n+1}-x_{k_n} + 1)(x_{l_n}-x_{l_n+1}+ 1)&\geq 2^n, \text{ or } \\ (x_{k_n+1}-x_{k_n} - 1)(x_{l_n}-x_{l_n+1}- 1)&\geq 2^n.\end{align*} Moreover, for every $n$ at least one of the following must hold: $$x_{k_n+1}-x_{k_n}\geq 2^{n/2}-1, \ \text{ or } \ x_{l_n}-x_{l_n+1}\geq 2^{n/2}-1.$$
\end{itemize}
\end{theoremd}

\begin{proof}
Since $x \notin \BM$, Lemma \ref{non-birkhoff lemma}, gives us a translate $\tilde \tau x \in \{\tau_{ 1, 1}x, \tau_{-1,-1}x\}$, such that the domain of crossing $D$ for $\tilde \tau x$ and $x$ is infinite. By Theorem B there are infinite sequences $\{k_n, l_n\}\in \Z$, with $| k_{n+1}-k_{n}|\leq d$, $| l_n-k_n|\leq r$ and such that $(\tilde \tau x_{k_n}-x_{k_n})>0$ and $(x_{l_n}-\tilde \tau x_{l_n})(\tilde \tau x_{k_n}-x_{k_n})\geq 2^n$. This gives us the first part of the Theorem. 

The second part of the theorem follows by Cauchy-Schwartz.
\end{proof}

This Dichotomy Theorem implies that a global minimizer $x$ that is not Birkhoff has to oscillate in a prescribed uniform way and it has to be growing with some exponential growth rate. Therefore it is very non-physical, as a solution of the generalized Frenkel-Kontorova crystal model.

\appendix
\section{Appendix: Ordering of minimizers}\label{section ordered minimizers}
In Section \ref{section birkhoff} we showed that if a global minimizer is not too wild, it is Birkhoff, i.e. ordered with respect to all its translates. In fact, much more is true. Any Birkhoff global minimizer is ordered with respect to almost all other Birkhoff global minimizers of the same rotation number. We elaborate on this statement below. 

Results in this section follow from the same arguments as in the twist map case (see \cite{MatherForni}). We compare Birkhoff global minimizers of the same rotation number and explain when they are ordered. 

All the proofs in this section hold also for a local energy $S$, satisfying Definition \ref{S}, with the weaker twist condition \begin{equation}\label{weak twist} \p_{j,k}S_i\leq 0, \ \forall j\neq k  \ \text{ and }  \ \p_{i,j}S_i< -\lambda <0, j\in\{i-1,i+1\}.\end{equation} For the sake of bigger generality of the results, we use the weaker twist condition (\ref{weak twist}) in place of the strong twist condition (\ref{strong twist}) used in previous sections because this weaker twist condition has been used in a couple of previous papers (see \cite{llave-lattices}, \cite{ghost_circles}, \cite{destruction}).

We have in mind that one of the following holds. Either the strong twist condition (\ref{strong twist}) holds and the minimizers are known to be in $\mathcal{ABM}$, so they are Birkhoff by Theorem \ref{birkhoff}, or the weaker twist condition (\ref{weak twist}) holds and the minimizers are a-priori known to be Birkhoff.

Since all Birkhoff sequences have a rotation number, we can write the collection of Birkhoff global minimizers as the following union $$\BM:=\bigcup_{\nu \in\R\backslash \Q  }\BM_\nu\cup \bigcup_{q/p \in \Q} \BM_{q/p}^+ \cup \BM_{q/p}^-,$$ defined by $$\BM_\nu:=\{x\in \M\cap \B_\nu\}, \ \text{ for } \ \nu\in \R\backslash \Q$$ and for $\frac{q}{p}\in \Q$, $$\BM_{q/p}^+:=\{x\in \M\cap \B_{q/p}\ | \ \tau_{p,q}x\geq x\} \ \text{ and } \ \BM_{q/p}^-:=\{x\in \M\cap \B_{q/p}\ | \ \tau_{p,q}x\leq x\}.$$ 

The following is a variant of Lemma \ref{strong ordering} that will prove to be useful in the rest of this section and has the same proof. 

\begin{lemma}\label{weak solutions ordering}
Let $x,y$ be solutions to (\ref{rr}) with the weak twist condition, such that $x<y$. Then $x\ll y$.
\end{lemma}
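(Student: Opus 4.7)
The plan is to follow the proof of Lemma~\ref{strong ordering} almost verbatim, adapting only the sign analysis so that it uses the weaker ingredients of (\ref{weak twist}) rather than the clean vanishing of three-distinct-index second derivatives supplied by the strong twist condition.

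First I would put $\alpha_i := y_i - x_i \geq 0$ (non-negative by hypothesis, and not identically zero) and argue by contradiction, assuming that there is some $i_0 \in \Z$ with $\alpha_{i_0} = 0$. Subtracting the recurrence relations for $x$ and $y$ at the index $i_0$ and interpolating along the segment from $x$ to $y$, exactly as in (\ref{x-y}), produces the identity
\[
0 = \p_{i_0} W(y) - \p_{i_0} W(x) = \sum_{k = i_0 - r}^{i_0 + r} A_k\, \alpha_k,
\]
where each coefficient $A_k$ is a finite sum of integrals of the second partials $\p_k \p_{i_0} S_j$ over those $j \in [i_0 - r, i_0]$ for which both $i_0$ and $k$ lie in $[j, j+r]$. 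Under the strong twist the three-index vanishing $\p_j \p_k S_i \equiv 0$ would have collapsed this double sum to a handful of nearest-neighbor terms; under (\ref{weak twist}) that collapse is no longer available, so I would retain all the off-diagonal terms and control them by sign alone.

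The decisive step is then a simple sign count. Since $\alpha_{i_0} = 0$ the diagonal term drops out, leaving a sum over $k \neq i_0$. By the first half of (\ref{weak twist}) every integrand $\p_k \p_{i_0} S_j$ with $k \neq i_0$ is non-positive, so each $A_k \leq 0$; combined with $\alpha_k \geq 0$ every summand is $\leq 0$, and since their total vanishes, each summand must vanish. For the nearest-neighbor indices $k = i_0 \pm 1$ the coefficient $A_k$ picks up a contribution from a nearest-neighbor cross derivative (coming from $S_{i_0}$ for $k = i_0 + 1$ and from $S_{i_0 - 1}$ for $k = i_0 - 1$) whose integrand is strictly below $-\lambda$ by the second half of (\ref{weak twist}); because all the other contributions to $A_{i_0 \pm 1}$ already have the same non-positive sign, no cancellation can spoil this strict inequality, so $A_{i_0 \pm 1} \leq -\lambda < 0$. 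This forces $\alpha_{i_0 - 1} = \alpha_{i_0 + 1} = 0$.

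I would then close the argument by induction: running the same computation at $i_0 \pm 1$ propagates the vanishing of $\alpha$ one index at a time across all of $\Z$, yielding $x \equiv y$ and contradicting $x < y$. The only point where (\ref{weak twist}) is genuinely weaker than (\ref{strong twist}), and hence the main subtlety to watch, is that one application of the argument extends the vanishing set of $\alpha$ by a single index rather than by a whole block of length $r$; but since we propagate step by step, a straightforward induction still sweeps out all of $\Z$, so this weakening never actually obstructs the conclusion.
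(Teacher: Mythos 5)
Your proof is correct and is essentially the argument the paper has in mind: the paper merely asserts this lemma ``has the same proof'' as Lemma~\ref{strong ordering}, and you have carried out precisely the required adaptation, retaining the off-diagonal terms and using their sign rather than their vanishing, with the strict negativity of the nearest-neighbor cross derivative doing the work of forcing $\alpha_{i_0\pm1}=0$. Your observation that the vanishing now propagates one index at a time (rather than in blocks of length $r$) is exactly the right thing to flag, and as you say it causes no difficulty for the induction over $\Z$.
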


The next lemma is a variant of Lemma \ref{addendum}, but applied to the case of weak twist.

\begin{lemma}\label{addendum2}
Let $x,y \in \M$ be such that $|x_i-y_i|\to 0$ for $i \to - \infty$ and for $i \to +\infty$. Then it holds that $x \ll y$, $x\equiv y$ or $x\gg y$.
\end{lemma}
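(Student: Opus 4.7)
The plan is to adapt the argument of Lemma \ref{addendum} to this weaker-twist, two-sided-asymptotic setting. First I would use Lemma \ref{weak solutions ordering} to reduce the trichotomy ``$x \ll y$, $x \equiv y$ or $x \gg y$'' to the weaker statement that $x$ and $y$ are comparable (i.e.\ $x \leq y$ or $y \leq x$): once, say, $x \leq y$ is known and $x \neq y$, Lemma \ref{weak solutions ordering} upgrades this immediately to $x \ll y$. So I would argue by contradiction, assuming both $\alpha$ and $\beta$ (defined by (\ref{alpha-beta})) have non-empty support.

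The core tool is the general principle (\ref{gp}). Its derivation uses only that $M^B(x), m^B(y), M^B(y), m^B(x)$ are compactly supported variations of $x$ or $y$, so it still holds under the weak twist condition (\ref{weak twist}). Applied to any $B = [i_0 - r, i_1]$ with $i_1 - i_0 > 2r$, it yields together with Proposition \ref{boundary energy}
\[
0 \leq W_B^c(x, y) \leq W_B^c(M^B(x), m^B(y)) = W^b_{i_0, -} + W^b_{i_1, +}.
\]
Because the boundary energies depend only on $\alpha$ and $\beta$ at indices within distance $r$ of $\partial B_\pm$, the hypothesis $|x_i - y_i| \to 0$ as $|i| \to \infty$ makes both $W^b_{i_0, -}$ and $W^b_{i_1, +}$ tend to zero as $-i_0$ and $i_1$ grow, so $W_B^c(x, y) \to 0$ along such an exhaustion. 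On the other hand, the expansion of $W_B^c(x, y)$ as in the proof of Lemma \ref{min-max lemma} together with the weak twist condition shows that $W_B^c(x, y)$ is a sum of non-negative terms $\int \partial_{j,k} S_i \, d\tau d\sigma \cdot \alpha_j \beta_k$; in particular $W_B^c$ is monotone non-decreasing in $B$, and the limit forces $W_B^c(x, y) \equiv 0$ for every finite $B$. Using the strict bound $\partial_{i, i+1} S_i < -\lambda$ from (\ref{weak twist}), this immediately yields $\alpha_i \beta_{i+1} = \alpha_{i+1} \beta_i = 0$ for every $i$: the difference $y - x$ never changes sign between adjacent indices.

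The main obstacle is ruling out the remaining configuration, in which $\mathrm{supp}\,\alpha$ and $\mathrm{supp}\,\beta$ are non-empty but separated by one or more ``equal indices'' $E = \{i : x_i = y_i\}$. Here I would apply the argument underlying Lemma \ref{weak solutions ordering} at a boundary equal index. Concretely, at such an index $i_* \in E$ adjacent to $\mathrm{supp}\,\alpha$ the linearized identity
\[
0 = \partial_{i_*} W(y) - \partial_{i_*} W(x) = \sum_{j} \sum_{k} \int_0^1 \partial_{k, i_*} S_j(\tau y + (1-\tau)x) \, d\tau \cdot (y_k - x_k)
\]
contains a strictly negative nearest-neighbor contribution $\int \partial_{i_*\pm 1, i_*} S_j \, d\tau \cdot \alpha_{i_*\pm 1} < -\lambda\, \alpha_{i_*\pm 1}$, which under (\ref{weak twist}) can only be compensated by a strictly negative $\beta_k$ at a nearest-neighbor index on the other side of $i_*$. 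Iterating this local propagation outward through successive equal indices would force $\mathrm{supp}\,\beta$ to accumulate arbitrarily close to $\mathrm{supp}\,\alpha$, ultimately producing either a nearest-neighbor sign change of $y-x$ (contradicting $W_B^c(x,y) \equiv 0$) or propagating the equal set out to $\pm\infty$ in such a way that, combined with $|x-y|\to 0$ at both infinities, the only option is $\alpha\equiv 0$ or $\beta\equiv 0$. This contradicts our starting assumption of incomparability, so $x$ and $y$ must be weakly comparable, and Lemma \ref{weak solutions ordering} delivers the trichotomy.
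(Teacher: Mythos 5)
The reduction from the trichotomy to weak comparability via Lemma \ref{weak solutions ordering}, and the observation that the general principle (\ref{gp}) together with the two-sided asymptotics forces $W_B^c(x,y)$ to vanish for every finite $B$, are both correct and mirror the proof of Lemma \ref{addendum}. The difficulty is the last step, and here your argument has a genuine gap. Under the weak twist condition (\ref{weak twist}) the only strictly negative mixed second derivatives guaranteed are the nearest-neighbor ones, so $W_B^c(x,y)\equiv 0$ only yields $\alpha_i\beta_{i+1}=\alpha_{i+1}\beta_i=0$; it leaves open configurations in which $\mathrm{supp}\,\alpha$ and $\mathrm{supp}\,\beta$ are both non-empty and separated by buffer indices where $x_i=y_i$. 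Your ``propagation through equal indices'' does not close this: at such an index $i_*$, the linearized identity $\partial_{i_*}W(y)-\partial_{i_*}W(x)=0$ only asserts a balance between a strictly negative contribution proportional to $\alpha_{i_*\pm1}$ and non-negative contributions coming from $\beta_k$ with $|k-i_*|\leq r$. Nothing forces $\beta$ onto an adjacent index, nothing produces a forbidden nearest-neighbor sign change, and the alternative you offer (``propagating the equal set out to $\pm\infty$'') does not by itself imply $\alpha\equiv0$ or $\beta\equiv0$. This is not merely a matter of presentation: for $r\geq 2$ and weak twist the configuration $\alpha_i>0$, $\alpha_{i+1}=\beta_{i+1}=0$, $\beta_{i+2}<0$ is compatible with both $W_B^c(x,y)=0$ (if $\partial_{i,i+2}S_i\equiv0$) and with the recurrence relation at $i+1$, so the obstruction cannot be purely local.

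The paper avoids this entirely by a different device. It assumes $x$ and $y$ are incomparable, so $M=\max\{x,y\}\neq x$ and $m=\min\{x,y\}\neq x$, and proves directly that $M$ (and $m$) is a \emph{global minimizer}: any compactly supported variation $v$ that decreased $W_B(M)$ by $\delta>0$ could, because $x$ and $y$ are asymptotic at both ends, be transferred (for $B$ large) into compactly supported variations $M^B(x)+v$ of $x$ and $m^B(y)$ of $y$ whose combined energy is below $W_B(x)+W_B(y)$, contradicting the minimality of $x$ and $y$. With $M$ a global minimizer and $M\geq x$, $M\neq x$, Lemma \ref{weak solutions ordering} yields $M\gg x$, which is incompatible with $m\neq x$. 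If you want to complete your argument rather than switch to the paper's, the missing ingredient is precisely a lemma of this type: from $W_B^c(x,y)\equiv 0$ deduce that $M$ or $m$ is itself a solution, and then invoke the strong ordering; the crossing-energy identity alone does not suffice under (\ref{weak twist}).
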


\begin{proof}
Assume not, so $M=\max\{x,y\}\neq x$ and $m=\min\{x,y\}\neq x$. We claim that $M$ and $m$ are also global minimizers. If $M$ is not, then there is a domain $\tilde B$, a variation $v$ with support in $\mathring{\tilde B}$ and a $\delta>0$, such that for all $B \supset \tilde B$, $W_B(M+v)=W_B(M)-\delta$. 

It holds by (\ref{min-max}) for every $B$ that $W_B(M)+W_B(m)\leq W_B(x)+W_B(y)$. On the other hand, since $x$ and $y$ are asymptotic, there exists for every $\varepsilon>0$ a domain $B_\varepsilon$, such that for all $B\supset B_\varepsilon$ it holds $|W_B(M_B(x))-W_B(M)|\leq \varepsilon$ and $|W_B(m_B(y))-W_B(m)|\leq \varepsilon$. Moreover, by taking $B$ large enough, also $|W_B(M_B(x)+v)-W_B(M+v)|<\varepsilon$ holds. But then for $\varepsilon<\delta/2$ it follows that $W_B(M_B(x+v))+W_B(m_B(y))<W_B(x)+W_B(y)$ which is a contradiction. So it holds by Lemma \ref{weak solutions ordering} that $M\equiv x$ or $M \gg x$ which finishes the proof.
\end{proof}

\subsection{Minimizers of the same irrational rotation number}

Let $\nu \in \R\backslash \Q$ and define the recurrent set of rotation number $\nu$ by $$\mathcal{BM}_\nu^{rec}:=\{x \in \mathcal{BM}_\nu\ | \ x=\lim_{n \to \infty}\tau_{k_n,l_n}x \text{ for some sequences } 0\neq k_n,l_n\}.$$ $\BM_\nu^{rec}$ is also called the Aubry-Mather set of rotation number $\nu$. For the discrete Frenkel-Kontorova model the next theorem was first proved in \cite{AubryLeDaeron} and is explained in \cite{MatherForni}, $\S 12$. A more general version of the proof, applicable to PDEs and monotone variational problems on lattices can be found in \cite{bangert87}. We state it without a proof.

\begin{theorem}\label{uniqueness}
For every $\nu\in \R\backslash \Q$, the recurrent set $\mathcal{BM}_\nu^{rec}$ is the unique smallest nonempty closed subset of $\mathcal{BM}_\nu$ that is invariant under translations.
\end{theorem}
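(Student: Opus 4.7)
The plan is to split the statement into two parts: (a) showing that $\mathcal{BM}_\nu^{rec}$ is a nonempty, closed, translation-invariant subset of $\mathcal{BM}_\nu$, and (b) showing that it is contained in every nonempty closed translation-invariant subset. Part (a) gives existence of a smallest such set, part (b) gives uniqueness. The overarching tool will be that the $\Z^2$-action on $\mathcal{BM}_\nu^{rec}$ is minimal, which in turn rests on a semi-conjugacy between this action and irrational rotation on a circle.

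For part (a), closedness is built into the definition of $\mathcal{BM}_\nu^{rec}$, and translation-invariance is easy: if $x=\lim_n \tau_{k_n,l_n}x$ with $|k_n|\to\infty$, then for any $(p,q)\in\Z^2$ we have $\tau_{p,q}x = \lim_n \tau_{k_n,l_n}(\tau_{p,q}x)$, so $\tau_{p,q}x\in\mathcal{BM}_\nu^{rec}$. For nonemptiness, pick any $x\in\mathcal{BM}_\nu$ (existence guaranteed by the existence theorem for Birkhoff global minimizers). Consider the sequence $y^{(n)}:=\tau_{n,\lfloor n\nu\rfloor}x$; by the uniform estimate \eqref{rotation number estimate}, $|y^{(n)}_i-x_0|$ is bounded uniformly in $i$ and $n$, and translation-invariance of $\mathcal{BM}_\nu$ keeps each $y^{(n)}$ in this set. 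Compactness of $\mathcal{BM}_\nu$ in the pointwise topology (which itself follows from the uniform estimate and the fact that $\mathcal{BM}$ is closed under pointwise limits) produces a convergent subsequence, and a standard diagonal / Zorn argument extracts a minimal nonempty closed invariant subset $A_0$. Any point of $A_0$ has dense orbit in $A_0$, hence is recurrent, so $A_0\subset \mathcal{BM}_\nu^{rec}$ and part (a) follows.

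For part (b), I would first establish that $\mathcal{BM}_\nu$ is \emph{totally ordered}: for $x\neq y$ in $\mathcal{BM}_\nu$, either $x\ll y$ or $y\ll x$. Together with $\tau_{0,1}x=x+1>x$, this lets us pass to the quotient $\widehat{\mathcal{BM}}_\nu:=\mathcal{BM}_\nu/\langle\tau_{0,1}\rangle$, which is a compact, totally ordered topological space with a natural $\Z$-action induced by $\tau_{1,0}$. One then checks that this induced action is orientation-preserving with rotation number exactly $\nu$; intuitively, $\tau_{k,l}x$ sits at ``position" $l-k\nu$ relative to $x$, as one sees from $(\tau_{k,l}x)_0-x_0 = x_{-k}-x_0+l \approx l-k\nu$. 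Since $\nu$ is irrational and $\{l-k\nu:(k,l)\in\Z^2\}$ is dense in $\R$, the induced circle map is semi-conjugate to irrational rotation, whose unique minimal set (either the full circle or a Cantor set) pulls back to a unique minimal $\Z^2$-invariant subset of $\mathcal{BM}_\nu$. By minimality, every orbit in this pullback is dense in it, so the pullback is exactly $\mathcal{BM}_\nu^{rec}$, and any nonempty closed invariant set must contain it.

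The main obstacle is the total ordering of $\mathcal{BM}_\nu$. Suppose $x,y\in\mathcal{BM}_\nu$ were not ordered; since they share the irrational rotation number $\nu$, the uniform estimate \eqref{rotation number estimate} gives $|x_i-y_i|\leq 2$ for all $i$, so $x-y$ is bounded. If their domain of crossing $D$ is bounded, Theorem A bounds $|D|\leq\tilde K$, and by choosing $k,l$ with $l-k\nu$ very small (possible since $\nu$ is irrational and $\{l-k\nu\}$ is dense) one translates $x$ by $\tau_{k,l}$ to get a minimizer that must cross $y$ somewhere far from the original $D$, eventually forcing an unbounded crossing domain or more than $\tilde K$ crossings, contradicting Theorem A or Theorem B. If $D$ is unbounded, Theorem B forces exponential blowup of $x-y$, contradicting boundedness. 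Making this argument clean is the technical heart of the proof, and once total ordering is in hand, the semi-conjugacy and minimality conclusions follow by standard Poincaré--Denjoy--Aubry--Mather theory.
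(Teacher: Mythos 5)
The paper states this theorem \emph{without proof}, pointing the reader to Aubry--Le~Daeron, Mather--Forni \S 12, and Bangert (1987); there is no in-paper argument to compare against, so your proposal must be judged on its own merits. Part~(a) is essentially right, modulo the remark that closedness is ``built into the definition'': for a general $\Z^2$-action the set of recurrent points need not be closed, and here closedness ultimately rests on the ordering and compactness structure that you develop only later.

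The substantive gap is in part~(b). You reduce everything to the claim that $\mathcal{BM}_\nu$ is strictly ordered, but in the paper's own logic --- and in Bangert's general treatment --- the strict ordering of $\mathcal{BM}_\nu$ is \emph{derived from} Theorem~\ref{uniqueness}, not the other way round: the proof of strict ordering given immediately after it in the appendix uses precisely the fact that $\alpha(z)=\alpha(w)=\mathcal{BM}_\nu^{\mathrm{rec}}$ (i.e.\ Theorem~\ref{uniqueness}), the summability of gaps of the Cantor set, and Lemma~\ref{addendum2}. Feeding total ordering in as an input is therefore circular unless you supply an independent proof, and your sketch for the bounded-crossing case does not do this. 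Theorem~B handles the unbounded case correctly (the Birkhoff estimate $|x_i-y_i|\leq 2$ rules out exponential growth), but when $D$ is bounded with $|D|\leq\tilde K$, translating $x$ by $\tau_{k,l}$ with $l-k\nu$ small simply produces a sequence $\tau_{k,l}x$ that crosses $y$ in a \emph{new} bounded domain of size $\leq\tilde K$, shifted by roughly $k$; nothing in Theorems~A or~B then forces an unbounded crossing domain or ``more than $\tilde K$ crossings'' for the original pair $(x,y)$. To make your route work you would need either a genuinely independent argument that two Birkhoff minimizers of the same irrational rotation number cannot cross at all (which is nontrivial in this long-range, weak-twist setting), or to follow Bangert's actual strategy: show that the orbit closure of a \emph{single} Birkhoff minimizer is totally ordered and semi-conjugate to irrational rotation, extract its unique minimal subset, and then prove separately that this minimal set does not depend on the chosen base point.
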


Observe that for any $x \in \mathcal{BM}_\nu$, the $\alpha$- and $\omega$-limit set of the map $\tau_{1,0}:\BM_\nu\to \BM_{\nu}$ defined by $$\alpha(x):=\bigcap_{n\in N}\overline{\bigcup_{l\in \Z}\{\tau_{-1,0}^k(x)+l \ | \ k>n\}} \ \text{ and } \ \omega(x):=\bigcap_{n\in N}\overline{\bigcup_{l\in \Z}\{\tau_{1,0}^k(x)+l \ | \ k>n\}} ,$$ are ordered subsets of $\mathcal{BM}_\nu^{rec}$, because $x$ is Birkhoff. Moreover, by definition they are minimal under translations. So, by the theorem above, the $\alpha$- and $\omega$- limit set for every $x\in \BM_\nu$ are in fact the same set, independent of $x$. This seems at first sight a very surprising result. However, equivalent statements arise in the study of invariant sets of circle homeomorphisms covered by the well known Denjoy theory. Not surprisingly, many proofs in both theories have similar flavors.

Since $\nu$ is irrational, it can be shown that $\mathcal{BM}_\nu^{rec}$ is either homeomorphic to a circle (then it is also called a minimal foliation), or it is a Cantor set (a minimal lamination). Again, this can be explained by a similar argument to the arguments in the Denjoy theory for invariant sets of circle homeomorphisms (for a full proof see e.g. \cite{ghost_circles}, Theorem $4.18$). Theorem \ref{uniqueness} has the following consequence.

\begin{theorem}
For every $\nu\in \R\backslash \Q$, the set of Birkhoff global minimizers of rotation number $\nu$, $\BM_\nu$, is strictly ordered.
\end{theorem}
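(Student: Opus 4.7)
The plan is first to reduce strict ordering to weak ordering. Given $x, y \in \BM_\nu$ with $x \neq y$, if we can show $x \leq y$ or $y \leq x$, then because both solve the recurrence (\ref{rr}), Lemma \ref{weak solutions ordering} upgrades this to $x \ll y$ or $y \ll x$. Thus it suffices to prove that any two elements of $\BM_\nu$ are weakly comparable.

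For weak comparability I would split into the foliation and lamination cases for $\BM_\nu^{rec}$. In the foliation case, minimality of $\BM_\nu^{rec}$ under translations, together with its density, forces $\BM_\nu = \BM_\nu^{rec}$, and the total ordering of $\BM_\nu^{rec}$ gives the conclusion. In the lamination case (Cantor set), I would attach to each $x \in \BM_\nu$ its ``slot'' in $\BM_\nu^{rec}$:
\[
x^- := \sup\{z \in \BM_\nu^{rec} \mid z \leq x\}, \qquad x^+ := \inf\{z \in \BM_\nu^{rec} \mid z \geq x\},
\]
the sup and inf taken pointwise. These sets are nonempty, since $\BM_\nu^{rec}$ is $\tau_{0,l}$-invariant, so any fixed $z \in \BM_\nu^{rec}$ can be shifted vertically to bracket $x$. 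By closedness of $\BM_\nu^{rec}$ under pointwise limits of monotone sequences, $x^\pm \in \BM_\nu^{rec}$. Either $x \in \BM_\nu^{rec}$ (so $x^- = x^+ = x$), or $x^- < x^+$ and $x$ lies in a gap $(x^-, x^+)$ of $\BM_\nu^{rec}$ (no $\BM_\nu^{rec}$-element strictly between).

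For distinct $x, y \in \BM_\nu$, the four elements $x^\pm, y^\pm$ are linearly ordered in $\BM_\nu^{rec}$. A short case analysis, using that $(x^-, x^+)$ and $(y^-, y^+)$ contain no $\BM_\nu^{rec}$-element strictly between their endpoints, shows that unless $x^- = y^-$ and $x^+ = y^+$ — i.e., $x$ and $y$ both lie in the same gap $(u^-, u^+)$ of $\BM_\nu^{rec}$ — weak ordering follows by sandwiching (e.g. $x^+ \leq y^-$ gives $x \leq x^+ \leq y^- \leq y$). The main obstacle is therefore the same-gap case.

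To handle the same-gap case I will show that the gap width tends to zero at infinity,
\[
u^+_i - u^-_i \to 0 \quad \text{as } i \to \pm \infty,
\]
which, combined with $u^- \leq x,y \leq u^+$, yields $|x_i - y_i| \to 0$ at both infinities; Lemma \ref{addendum2} then gives $x \ll y$, $x \equiv y$, or $x \gg y$, and $x \neq y$ rules out the middle option. For the gap-width convergence I invoke Denjoy-type dynamics: $\tau_{1,0}$ descends via the projection $\pi(z) := z_0 \bmod 1$ to a homeomorphism of the Cantor set $C := \pi(\BM_\nu^{rec}) \subset \R/\Z$ with irrational rotation number $\nu$. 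The image $(\pi(u^-), \pi(u^+))$ is a gap of $C$ of length $u^+_0 - u^-_0$, and by irrationality its $\tau_{1,0}$-orbit consists of infinitely many distinct gaps of $C$. Since the total length of all gaps of $C$ is at most $1$, only finitely many can exceed any fixed $\varepsilon > 0$ in width, so the widths $|u^+_{-n} - u^-_{-n}|$ along the orbit — which equal the widths of $\tau_{1,0}^n(\pi(u^-), \pi(u^+))$ — must tend to $0$ as $|n| \to \infty$, which is exactly $u^+_i - u^-_i \to 0$ as $i \to \pm \infty$.
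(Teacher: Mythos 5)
Your proof is correct and follows the same overall strategy as the paper: reduce strict ordering to weak ordering via Lemma \ref{weak solutions ordering}, split on whether $\BM_\nu^{rec}$ is a foliation or a Cantor set, and in the Cantor case show by a case analysis on the gap structure that the only nontrivial configuration is two minimizers in the same gap, which is then handled by showing gap widths vanish at both infinities and invoking Lemma \ref{addendum2}. The one place you diverge from the paper is in how the gap-width decay is obtained. The paper cites a summability result for gaps (Theorem $10.2$ of the reference on ghost circles): for a gap $[u^-,u^+]$ of $\BM_\nu^{rec}$ one has $\sum_{i\in\Z}(u^+_i-u^-_i)\leq 1$, which in particular forces the widths to tend to zero. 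You instead derive the decay from first principles by the Denjoy picture: project $\BM_\nu^{rec}$ via $z\mapsto z_0\bmod 1$ to a $\tau_{1,0}$-invariant Cantor set $C\subset\R/\Z$ on which $\tau_{1,0}$ acts as a circle homeomorphism of irrational rotation number, so the orbit of a gap of $C$ consists of infinitely many pairwise disjoint gaps whose total length is at most $1$, and hence the widths $u^+_i - u^-_i$ along the orbit tend to $0$. Notice that this argument actually \emph{recovers} the full summability estimate the paper cites (disjoint subintervals of a circle have total length $\leq 1$), so the two routes are morally equivalent; yours is more self-contained, at the cost of setting up the semi-conjugacy to a circle rotation and verifying the irrational rotation number rules out periodic gaps, while the paper's is shorter because it outsources this to an earlier reference.
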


\begin{proof}
For every $x\in \BM_\nu$, $\alpha(x)$ is ordered with respect to $x$ and by the Theorem \ref{uniqueness}, $\alpha(x)=\mathcal{BM}_\nu^{rec}$. In case $\mathcal{BM}_\nu^{rec}$ is a minimal foliation, we are done because then it holds for every $x\in \mathcal{BM}_\nu$ that $x\in \mathcal{BM}_\nu^{rec}$. In case $\mathcal{BM}_\nu^{rec}$ is a Cantor set, it holds that every gap $[x,y]$ ($(x,y)\cap \mathcal{BM}_\nu^{rec}=\varnothing$) is summable (see e.g. \cite{ghost_circles}, Theorem $10.2$): explicitly, $$\sum_{i\in \Z}y_i-x_i\leq 1.$$ Assume that $z,w\in \mathcal{BM}_\nu\backslash \mathcal{BM}_\nu^{rec}$. Since $\mathcal{BM}_\nu^{rec}=\alpha(z)=\alpha(w)$, $z$ and $w$ have to be ordered with respect to the recurrent set. So, they could cross only if they are in the same gap, but this cannot happen by Lemma \ref{addendum2}. 
\end{proof}

\subsection{Minimizers of the same rational rotation number}

As in the case of twist maps, it holds that for every $\frac{q}{p}\in \Q$, the sets $\mathcal{BM}^+_{q/p}$ and $\mathcal{BM}^-_{q/p}$ are ordered. The arguments are summarized in the following. 

\subsubsection{The periodic case}\label{periodic}
As was explained in the introduction, by definition, $\M_{p,q}$ is the set of $p$-$q$-periodic minimizers that minimize the periodic action $W_{p,q}$. It holds by Aubry's Lemma also for the weaker twist condition (\ref{weak twist}) that $\M_{p,q}\subset \B_{p,q}$ which in particular implies that periodic minimizers are global minimizers. On the other hand, it also holds that every global minimizer which is $p$-$q$-periodic, is a periodic minimizer, in notation $\BM_{q/p}\cap \X_{p,q}=\M_{p,q}$. The proof of these statements can be found in \cite{ghost_circles} as Theorems $4.3$, $4.8$ and $4.9$ and Corollary $4.6$. In particular, $\M_{p,q}$ is ordered.

\subsubsection{Non-periodic rational case}
In this section we show that the sets $\mathcal{BM}^+_{q/p}$ and $\mathcal{BM}^-_{q/p}$ are ordered. We provide the proofs for $\BM_{q/p}^+$, as the other case is analogous. 

Take an arbitrary $x \in \BM_{q/p}^+\backslash \M_{p,q}$. Then for every $i\in \Z$, $(\tau^n_{p,q}x)_i$ is an increasing and bounded sequence and it is clear that $\lim_{n \to \infty}\tau^n_{p,q}x=:x^+ \in \M_{p,q}$ and  $\lim_{n \to \infty}\tau^{-n}_{p,q}x=:x^-\in \M_{p,q}$.  The first step of the proof is to show that there are no periodic minimizers between $x^-$ and $x^+$.

\begin{theorem}\label{rational crossings}
Let $x \in \BM_{q/p}^+\backslash \M_{p,q}$ and $x^-,x^+\in \M_{p,q}$ as defined above. Then there is no $y \in \M_{p,q}$ such that $x^-< y< x^+$.
\end{theorem}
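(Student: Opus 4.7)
The proof proceeds by contradiction. Suppose $y \in \M_{p,q}$ with $x^- < y < x^+$.

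First I would establish that $x$ is a pointwise heteroclinic connection: $x_j \to x^+_j$ as $j \to -\infty$ and $x_j \to x^-_j$ as $j \to +\infty$. This follows directly from $x^{\pm} = \lim_{n\to\infty} \tau_{p,q}^{\pm n}x$ by the identity $(\tau_{p,q}^n x)_i = x_{i-np} + nq$ combined with $p$-$q$-periodicity of $x^{\pm}$: writing $(\tau_{p,q}^n x)_i - x^+_i = x_{i-np} - x^+_{i-np}$, fixing $i$ and letting $n\to\infty$ gives $x_j \to x^+_j$ along $j = i - np \to -\infty$. Since $\M_{p,q}$ is strictly totally ordered (Section \ref{periodic}), we have $x^- \ll y \ll x^+$; by $p$-$q$-periodicity the gaps $x^+_i - y_i$ and $y_i - x^-_i$ are uniformly bounded below by a positive constant. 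Combined with the asymptotics of $x$, this forces $x_i > y_i$ for $i$ sufficiently negative and $x_i < y_i$ for $i$ sufficiently positive, so $x$ and $y$ have a nonempty, bounded domain of crossing $D \subset [j_0, j_1]$.

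Next, consider $m := \min(x, y)$: it coincides with $x$ on the right tail $[j_1+1, \infty)$ (where $y > x$) and with $y$ on the left tail $(-\infty, j_0-1]$ (where $x > y$). I claim $m$ is not a solution of the recurrence (\ref{rr}): for if it were, then $m \leq x$ with equality on the right tail would, by the strong-ordering Corollary \ref{ordering minimizers} (or Lemma \ref{weak solutions ordering} in the weak-twist case), force the dichotomy $m \equiv x$ or $m \ll x$; the latter is incompatible with equality on the right tail, so $m \equiv x$ on $\Z$, contradicting $m = y \neq x^+$ on the left tail. Hence there exist a finite interval $\tilde B$ and a variation $w$ with $\mathrm{supp}(w) \subset \mathring{\tilde B}$ and $W_{\tilde B}(m + w) < W_{\tilde B}(m)$.

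To derive the contradiction, I would then build compactly supported variations $v_x := w\cdot\chi_{\{y > x\}}$ and $v_y := w\cdot\chi_{\{y \leq x\}}$ of $x$ and $y$; these have disjoint supports and $v_x + v_y = w$. For $w$ small enough the modified pair $(\tilde x, \tilde y) := (x + v_x, y + v_y)$ satisfies $\max(\tilde x, \tilde y) = M$ and $\min(\tilde x, \tilde y) = m + w$. Applying the min--max inequality (Lemma \ref{min-max lemma}) to $(\tilde x, \tilde y)$ yields $W_B(\tilde x) + W_B(\tilde y) \geq W_B(M) + W_B(m + w)$; combined with the strict min--max for $(x, y)$ coming from the actual crossings in $D$ together with the strong twist (see the remark after Lemma \ref{min-max lemma}), the global minimality bounds $W_B(\tilde x) \geq W_B(x)$, $W_B(\tilde y) \geq W_B(y)$, and the boundary-energy analysis of Section \ref{section boundary energy}, one extracts the contradiction. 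The main obstacle is that both the min--max and the global-minimality estimates give \emph{lower} bounds on $W_B(\tilde x) + W_B(\tilde y)$, so the contradiction must come from a careful second-order comparison: at the noncritical $m$ one gets a first-order decrease $-c\|w\|$, while at the critical points $x, y$ the Taylor expansion forces the joint increase $W_B(\tilde x) + W_B(\tilde y) - W_B(x) - W_B(y) = O(\|w\|^2)$, so one must track how $W_B^c(\tilde x, \tilde y)$ depends linearly on $w$ in order to make the first-order term prevail.
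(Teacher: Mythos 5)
Your setup is correct and matches the paper's: the heteroclinic asymptotics $x_i\to x^\pm_i$ as $i\to\mp\infty$, the strict ordering $x^-\ll y\ll x^+$, and the conclusion that $D$ is nonempty and bounded, with $x>y$ far left and $x<y$ far right. Your observation that $m=\min(x,y)$ cannot solve (\ref{rr}) is also correct, by the argument you give via Corollary \ref{ordering minimizers}. But from that point on the argument does not close, and you yourself identify why: writing $\tilde x=x+v_x$, $\tilde y=y+v_y$ with $v_x+v_y=w$ and applying Lemma \ref{min-max lemma} yields $W^c_B(\tilde x,\tilde y)-W^c_B(x,y)=[W_B(\tilde x)-W_B(x)]+[W_B(\tilde y)-W_B(y)]-[W_B(m+w)-W_B(m)]\geq c\|w\|+O(\|w\|^2)>0$ for small $w$, which is a \emph{consistent} statement, not a contradiction. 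There is no obvious way to make the first-order decrease at $m$ ``win'' against the $O(\|w\|^2)$ increases at the minimizers $x$ and $y$, because both the min--max and the minimality inequalities go the same direction. Moreover, the boundary-energy machinery of Section \ref{section boundary energy} (which works so well in Lemma \ref{addendum}) does \emph{not} become small here: $x$ and $y$ are not asymptotic, so $\alpha$ and $\beta$ do not vanish at infinity and the boundary energies stay of order one, so that route is also blocked. This is a genuine gap, not merely an unfinished calculation.

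The paper's proof goes a different way, and the difference is essential. Instead of perturbing $m$, it \emph{translates} it: taking $k$ with $kp>2i_0+r$, one compares $W_B(m)$ with $W_B(\tau_{kp,0}m)$ and shows the two agree up to $\varepsilon$ for $B$ large, because the translated segment of $m$ cut off on the right matches (up to $\varepsilon$) a periodic segment of $x^-$ that the translation absorbs on the left. One then defines the hybrid $z$ equal to $M$ on the left of $i_0$ and to $\tau_{kp,0}m$ on the right, and uses the overlap (where both equal $y$) to get the exact bookkeeping identity $W_B(\tau_{kp,0}m)+W_B(M)=W_B(z)+W_B(y)$. Combined with min--max, this yields $W_B(x)+\varepsilon\geq W_B(z)$, from which one extracts that $z$ is a global minimizer, and finally obtains a contradiction because $x\leq z$ but not $x\ll z$, violating Lemma \ref{weak solutions ordering}. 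The crucial ingredients your sketch lacks are (i) the exploitation of $p$-$q$-periodicity of $W$ through translation invariance, which is what makes $\tau_{kp,0}m$ comparable in energy to $m$, and (ii) the gluing of two configurations along a segment where both coincide with the periodic $y$. Without something playing that role, the first-order/second-order mismatch you flagged has no resolution.
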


\begin{proof}
Our proof is a variation on a proof in \cite{MatherForni}. Assume the Theorem is not true and that there is such a $y\in \M_{p,q}$. Because stationary points cannot be weakly ordered by Lemma \ref{weak solutions ordering}, it must hold that $x^- \ll y \ll x^+$. Since $x^-,y$ and $x^+$ are periodic, and because $x_i \to x^\pm$ for $i\to \mp \infty$, there is an integer $i_0\in \Z$, such that $x_i>y_i$ for all $i<-i_0$ and $x_i<y_i$ for all $i>i_0$.

For every $B$ it holds by (\ref{min-max}) that $W_B(x)+W_B(y)\geq W_B(m)+W_B(M).$ Let $k$ be such that $kp>2i_0+r$ and look at $\tau_{kp,0}(m)$ which is asymptotic to $m$ and to $x^-$ in $+\infty$. 

Our next claim is that for every $\varepsilon>0$, there exists an $i_\varepsilon$ such that it holds for all $B \supset B_\varepsilon:=[-i_\varepsilon,i_\varepsilon]$ that \begin{equation}\label{estimate1}|W_B(m)-W_B(\tau_{kp,0}m)|\leq \varepsilon.\end{equation} This is true by the following consideration: let $B:=[-i,i]$ and compute $$|W_B(\tau_{kp,0}(m))-W_B(m)|=|W_{B+kp}(m)-W_B(m)|=|W_{[i+1,i+kp]}(m)-W_{[-i+1,-i+kp]}(m)|.$$ If $i>i_0+kp$, then $m\equiv y$ on $[-i,-i+kp]$ and because $x^-$ and $y$ are $p$-$q$-periodic minimizers, it holds that $W_{[-i+1,-i+kp]}(m)=W_{[i+1,i+kp]}(x^-).$ This implies by the equalities above, that $$|W_{B}(\tau_{kp,0}(m))-W_{B}(m)|=|W_{[i+1,i+kp]}(m)-W_{[i+1,i+kp]}(x^-)|.$$ Now it is clear that the claim above holds, since $m_i\to x_i^-$ for $i\to +\infty$. Explicitly, it holds that $|W_{[i+1,i+kp]}(m)-W_{[i+1,i+kp]}(x^-)|\leq L |m_i-x_i^-|$ because of the uniform bound on second derivatives of $S$ and because $|x_i^--x_{i+1}^-|$ and $|m_i-m_{i+1}|$ are uniformly bounded, by the fact that $x^-$ and $m$ are Birkhoff. 

Next, we define the configuration $z$ by $z_i:=M_i$ for $i< i_0$, and $z_i:=m_{i-kp}=(\tau_{kp,0}(m))_i$ for $i \geq i_0$. By definition of $k$ it follows that $\tau_{kp,0}(m) \equiv y$ on $[-i-r,i_0+r]$. Moreover, on $[i_0,i_0+r]$ it holds $z\equiv M\equiv \tau_{kp,0}(m) \equiv y$, so it follows that \begin{align*}W_B(\tau_{kp,0}(m))+W_B(M)=&W_{[-i,i_0-1]}(y)+W_{[i_0,i]}(z)+W_{[-i,i_0-1]}(z)+W_{[i_0,i]}(y)\\ =&W_B(z)+W_B(y) .\end{align*} This equality, together with the minimum-maximum principle and (\ref{estimate1}) gives for all $B\supset B_\varepsilon$, $$W_B(x)+W_B(y)\geq W_B(m)+W_B(M)\geq W_B(y)+W_B(z)-\varepsilon,$$ so \begin{equation}\label{estimate2}W_B(x)+\varepsilon\geq W_B(z).\end{equation}

We claim that $z$ is a global minimizer. Assume not. Then there exists a domain $\bar B$, a variation $v$ with support in $\mathring{\bar B}$ and a $\delta>0$, such that $W_{\bar B}(z) = W_{\bar B}(z+v)+\delta$. Moreover, for all $B\supset \bar B$, it holds that $W_{B}(z) = W_{B}(z+v)+\delta.$ It holds for $z$ that it is asymptotic to $x$ in $+\infty$ and that $z_i=x_i$ for all $i<-i_0$. We change $z$ into variation of $x$ with support in some $\mathring{B}$, by defining $z_B(x)$ where $z_B(x)_i:=z_i$ for all $i\in \mathring{B}$ and $z_B(x)_i:=x_i$ for all $i\notin \mathring{B}$. Since $v$ is supported in $\mathring{\bar B}$ and $\bar B \subset B$, also $z_B(x)+v$ is a variation of $x$. In particular, it also holds \begin{equation}\label{estimate3}W_{B}(z_B(x)) = W_{B}(z_B(x)+v)+\delta.\end{equation} Because $z$ and $x$ are asymptotic and by definition of $B_\varepsilon$, there is a constant $C$, such that \begin{equation}\label{estimate4}|W_B(z)-W_B(z_B(x))|\leq C \varepsilon,\end{equation} for all $B\supset B_\varepsilon$. By choosing $\varepsilon<\delta/(C+1)$ and combining inequalities (\ref{estimate2}), (\ref{estimate3}) and (\ref{estimate4}), we get for all $B$ such that $B_\varepsilon\subset B$ and $\bar B\subset B$, the inequality $$W_B(x)+\delta>W_B(x)+(C+1)\varepsilon \geq W_B(z)+C\varepsilon\geq W_B(z_B(x))=W_B(z_B(x)+v)+\delta.$$ Because $z_B(x)+v$ is a variation of $x$ with support in $\mathring{B}$, this contradicts the assumption that $x$ is a global minimizer, so $z$ must be a global minimizer.

The last part of the proof is to notice that $x$ and $z$ are ordered, but not strictly ordered. Obviously, $x\equiv z$ on $(-\infty,-i_0]$ and $x\leq z$ on $[-i_0,i_0]$, because here $z\equiv M$. On $[i_0,-i_0+kp]$, $z\equiv y$, so by definition of $i_0$, $z>x$. For $i>-i_0+kp$, it either holds $(\tau_{kp,0}m)_i=(\tau_{kp,0}x)_i>x_i$ because $x\in \M_{q/p}^+\backslash \M_{p,q}$, or $(\tau_{kp,0}m)_i=(\tau_{kp,0}y)_i=y_i>x_i$, because $i>i_0$. So $x<z$ but not $x\ll y$, which contradicts Lemma \ref{weak solutions ordering}. This finishes the proof.
\end{proof}

With Theorem \ref{rational crossings}, we can easily get the announced result for this section.

\begin{theorem}
For every $\frac{q}{p}\in \Q$, the sets $\mathcal{BM}^+_{q/p}$ and $\mathcal{BM}^-_{q/p}$ are ordered. 
\end{theorem}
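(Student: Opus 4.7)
The plan is to treat $\BM^+_{q/p}$ (the case $\BM^-_{q/p}$ is symmetric), fixing arbitrary $x,y\in\BM^+_{q/p}$ and analysing three cases. First I would recall the structure provided by the preceding material: for any $z\in\BM^+_{q/p}\setminus\M_{p,q}$, Remark \ref{bound exists} and the discussion preceding Theorem \ref{rational crossings} give $p$-$q$-periodic limits
\[
z^- := \lim_{n\to\infty}\tau^{-n}_{p,q}z, \qquad z^+ := \lim_{n\to\infty}\tau^{n}_{p,q}z
\]
in $\M_{p,q}$, with monotone pointwise convergence. Lemma \ref{weak solutions ordering} upgrades $z^-\le z\le z^+$ to $z^-\ll z\ll z^+$. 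The monotone convergence together with the $p$-$q$-periodicity of $z^\pm$ gives $|z_i - z^-_i|\to 0$ as $i\to +\infty$ and $|z_i - z^+_i|\to 0$ as $i\to -\infty$; although the decay comes a priori along each of the $p$ residue classes modulo $p$, taking the maximum over these classes turns it into genuine asymptotic convergence. Finally, Theorem \ref{rational crossings} tells us that the pair $(z^-,z^+)$ is consecutive in the strictly ordered set $\M_{p,q}$.

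With these tools in hand I would split into cases. Case (i): both $x,y\in\M_{p,q}$ are ordered by Section \ref{periodic}. Case (ii): $x\in\M_{p,q}$ and $y\in\BM^+_{q/p}\setminus\M_{p,q}$; here $x,y^-,y^+$ all sit in the ordered set $\M_{p,q}$ and the sandwich $y^- < x < y^+$ is forbidden by Theorem \ref{rational crossings}, leaving $x\le y^-\le y$ or $x\ge y^+\ge y$. Case (iii): both $x,y\in\BM^+_{q/p}\setminus\M_{p,q}$; I would compare their consecutive pairs inside $\M_{p,q}$. Using Theorem \ref{rational crossings} together with the strict ordering of $\M_{p,q}$, the pairs either coincide, i.e.\ $x^\pm = y^\pm$, or they sit essentially disjointly, in which case, without loss of generality, $x^+\le y^-$ and hence $x\le x^+\le y^-\le y$.

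The main obstacle is the coincident subcase of case (iii): both minimizers share the same periodic asymptotics at $\pm\infty$. There I would invoke the asymptotic ordering Lemma \ref{addendum2}, the weak-twist analogue of the addendum to Aubry's lemma proved earlier in the appendix: since $|x_i - y_i|\to 0$ as $i\to\pm\infty$, it forces $x\ll y$, $x\equiv y$, or $x\gg y$. This lemma does the genuine work of the theorem: its proof establishes that both $M=\max\{x,y\}$ and $m=\min\{x,y\}$ must themselves be global minimizers, by comparing energies across arbitrarily large domains while exploiting that the boundary contributions vanish in the asymptotic regime; Lemma \ref{weak solutions ordering} then precludes any partial coincidence between $\{x,y\}$ and $\{m,M\}$ and yields the strong trichotomy.
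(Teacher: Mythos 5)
Your proof is correct and follows essentially the same route as the paper: the same three-case split, the same use of Theorem \ref{rational crossings} to force the asymptotic periodic pairs $(x^-,x^+)$ and $(y^-,y^+)$ either to coincide or to be disjoint gaps in the ordered set $\M_{p,q}$, and the same appeal to Lemma \ref{addendum2} to handle the coincident case where $x$ and $y$ are asymptotic. Your extra remark about uniformizing the monotone convergence over the $p$ residue classes is a detail the paper leaves implicit but is a correct clarification.
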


\begin{proof}
Again, we give the proof only for $\mathcal{BM}^+_{q/p}$, as the other case is equivalent. Let $x,y \in \mathcal{BM}^+_{q/p}$. The case where $x,y\in \M_{p,q}$ is covered in section \ref{periodic} and the case for $x\in \BM_{q/p}^+$ and $y\in \M_{p,q}$ is covered in Theorem \ref{rational crossings}. In view of this, let $x,y \in \BM_{q/p}^+\backslash \M_{p,q}$ and look at the ordered periodic minimizers $x^+$ and $x^-$. If $y\notin [x^-,x^+]$, then by Theorem \ref{rational crossings}, it must hold that $y\ll x^-$ so $y\ll x$, or $y\gg x^+$ so $y\gg x$. On the other hand, if $y\in [x^-,x^+]$, then by the same Theorem, $y^+=x^+$ and $y^-=x^-$, so $y$ and $x$ are asymptotic, and by Lemma \ref{addendum2} they are ordered.
\end{proof}

\subsubsection{Heteroclinic connections}

Our last theorem is the equivalent of Theorem $13.5$ from \cite{MatherForni}. It shows that for every gap in the set of periodic minimizers, there are non-periodic global minimizers forming heteroclinic connections between the two periodic minimizers that constitute the gap. 

\begin{theorem}[Heteroclinic connections]\label{heteroclinic connections}
Assume that $x^-,x^+\in \M_{p,q}$ are such that there is no $y\in \M_{p,q}$ with $x^-\ll y \ll x^+$. Then there exist sequences $x\in \BM^+_{q/p}\backslash \M_{p,q}$ and $\bar x \in \BM_{q/p}^-\backslash \M_{p,q}$ such that $$\lim_{n\to \infty}\tau_{p,q}^n x=x^+=\lim_{n\to -\infty}\tau_{p,q}^n \bar x   \ \text{ and } \  \lim_{n\to -\infty}\tau_{p,q}^nx=x^-=\lim_{n\to \infty}\tau_{p,q}^n \bar x.$$
\end{theorem}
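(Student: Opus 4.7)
The plan is to construct $x\in \BM^+_{q/p}\setminus \M_{p,q}$ as a recentered subsequential limit of action-minimizers on expanding intervals carrying the boundary data $x^+$ on the far left and $x^-$ on the far right; the sequence $\bar x\in \BM^-_{q/p}\setminus \M_{p,q}$ is produced by the symmetric construction in which the boundary data are swapped.

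For each large $N\in\N$ I would consider the admissible class
\[\mathcal{C}_N := \{z\in\R^\Z : z_i=x^+_i\text{ for }i\leq -Np,\ z_i=x^-_i\text{ for }i\geq Np\},\]
and use the coercivity in Definition \ref{S} to obtain a minimizer $z^{(N)}$ of $W_{[-Np-r,\, Np]}$ on $\mathcal{C}_N$. Applying the minimum-maximum principle (Lemma \ref{min-max lemma}) to $\max(z^{(N)},x^-)$ and $\min(z^{(N)},x^+)$, both of which remain in $\mathcal{C}_N$ since the boundary data already lie in $[x^-,x^+]$, and using that $x^\pm\in\M$ are themselves global minimizers, shows that $z^{(N)}$ can be chosen with $x^-\leq z^{(N)}\leq x^+$. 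To prevent the transition region of $z^{(N)}$ from escaping to infinity, I would fix a closed subinterval $[a,a']\subset(x^-_0,x^+_0)$ (nonempty since $x^-\ll x^+$ by Lemma \ref{weak solutions ordering}) and pick an integer shift $k(N)\in\Z$ so that $\tilde z^{(N)}:=\tau_{p k(N), q k(N)} z^{(N)}$ satisfies $\tilde z^{(N)}_0\in [a, a']$. Such $k(N)$ exists by a discrete intermediate-value argument applied to the quantity $z^{(N)}_{-pk}+qk$, which equals $x^+_0$ at $k=N$ and $x^-_0$ at $k=-N$, and whose jumps in $k$ are uniformly controlled by the estimate (\ref{rotation number estimate}) applied to the sandwich $x^-\leq z^{(N)}\leq x^+$. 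Since all $\tilde z^{(N)}$ remain in $[x^-,x^+]$, a Cantor diagonal subsequence converges pointwise to some $x\in\R^\Z$ with $x_0\in[a,a']$, so in particular $x\neq x^\pm$. That $x\in\M$ follows by the standard argument: for any compactly supported variation $v$, the support of $v$ sits in the interior of the translated minimization window for all $N$ large, so $W_B(\tilde z^{(N)})\leq W_B(\tilde z^{(N)}+v)$ passes to the pointwise limit.

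Next I would establish that $x$ is Birkhoff with rotation number $q/p$. The sandwich $x^-\leq x\leq x^+$ combined with (\ref{rotation number estimate}) for $x^\pm\in\B_{q/p}$ forces $|x_n - (q/p)n - x_0|$ to be uniformly bounded, and hence $x-\tau_{k,l}x$ is uniformly bounded for every $(k,l)$. The exponential growth conclusion of Theorem B is therefore violated for every translate, so $x\in \mathcal{ABM}$, and Theorem \ref{birkhoff} yields $x\in\BM$; the sandwich also forces $\rho(x)=q/p$, so $x\in\BM^+_{q/p}\cup\BM^-_{q/p}$. To pin down the direction, I would argue that the boundary data of $z^{(N)}$ induce, in the pointwise limit, the relation $\tau_{p,q}x\geq x$ and hence place $x$ in $\BM^+_{q/p}$; alternatively, one may observe that the symmetric construction with boundary orientations reversed necessarily produces a limit in $\BM^-_{q/p}$, which is the desired $\bar x$, so at least one direction is obtained by each version of the construction.

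The main obstacle, and the step where the gap hypothesis is essential, is then identifying the asymptotes. Since $x\in\BM^+_{q/p}$, the iterates $(\tau^n_{p,q}x)_{n\in\Z}$ are pointwise monotone in $n$ and trapped in $[x^-,x^+]$, so by Remark \ref{bound exists} the pointwise limits $x^\pm_\infty:=\lim_{n\to\pm\infty}\tau^n_{p,q}x$ exist and are $p$-$q$-periodic. As pointwise limits of translates of a global minimizer, $x^\pm_\infty$ are themselves global minimizers, hence $x^\pm_\infty\in\M_{p,q}$. The gap hypothesis, together with $x^-\leq x^\pm_\infty\leq x^+$ and Lemma \ref{weak solutions ordering}, then forces $x^\pm_\infty\in\{x^-, x^+\}$. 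Since $x\neq x^\pm$ the monotone sequence $\tau^n_{p,q}x$ is non-constant, so the two endpoint limits must differ: $x^-_\infty=x^-$ and $x^+_\infty=x^+$, which are precisely the required asymptotes. The construction of $\bar x\in\BM^-_{q/p}\setminus\M_{p,q}$ is identical modulo the mirror boundary orientation, yielding the opposite Birkhoff direction and the asymptotes $\lim_{n\to-\infty}\tau^n_{p,q}\bar x = x^+$ and $\lim_{n\to\infty}\tau^n_{p,q}\bar x = x^-$.
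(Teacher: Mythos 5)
Your strategy---constrained minimization on expanding intervals with boundary data $x^+$ on the far left and $x^-$ on the far right, then recentering and taking a diagonal limit---is a genuinely different route from the paper's, which instead approximates by periodic minimizers of nearby rotation numbers $q_n/p_n\to q/p$ and normalizes by choosing, inside the totally ordered set $\M_{p_n,q_n}$, the smallest element whose value at $0$ is at least a fixed $b\in(x^-_0,x^+_0)$. Your sandwich $x^-\leq z^{(N)}\leq x^+$ via the minimum--maximum principle is correct, as is the final step using the gap hypothesis and Lemma~\ref{weak solutions ordering} to identify the two monotone limits $\lim_{n\to\pm\infty}\tau^n_{p,q}x$. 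But the recentering step has a real gap. With $f(k):=(\tau_{pk,qk}z^{(N)})_0=z^{(N)}_{-pk}+qk$, the sandwich and the $p$-periodicity of $x^+-x^-$ give only $|f(k)-f(k-1)|\leq (x^+-x^-)_{-p(k-1)}=x^+_0-x^-_0$: the jump bound is exactly the full gap width, i.e.\ exactly the total range of $f$. A discrete intermediate-value argument therefore does \emph{not} force any $f(k)$ into a prechosen proper subinterval $[a,a']$ of $(x^-_0,x^+_0)$: $f$ can legitimately leap in one step from arbitrarily near $x^+_0$ to arbitrarily near $x^-_0$, skipping $[a,a']$ entirely. (Appealing to the Birkhoff estimate (\ref{rotation number estimate}) directly for $z^{(N)}$ is not available either, since the constrained minimizer is not a priori Birkhoff; the sandwich only transfers the estimate from $x^\pm$ and gives back the same bound.) The repair is a one-sided threshold: take $k(N)$ to be the first $k$ with $f(k)\geq b$, which in the limit yields $x_0\geq b>x^-_0$ and $(\tau_{-p,-q}x)_0\leq b<x^+_0$, hence $x\neq x^\pm$; this is precisely the normalization that the paper's choice of $y^n$ encodes.

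Even with that repair there is a second gap that your write-up does not address. You assert that every compactly supported variation eventually sits in the interior of the translated window of $\tilde z^{(N)}$, but this requires $N-|k(N)|\to\infty$ along the subsequence, that is, the transition region of $z^{(N)}$ must not stay within a bounded distance of the boundary $\pm Np$. That is not established; if it fails, the pointwise limit is only known to solve the recurrence on a half-line, which undermines the subsequent appeals to Theorem~\ref{birkhoff} and Remark~\ref{bound exists}, both of which need $x$ to be a global minimizer. The paper sidesteps both issues at once: the approximating sequences are genuine periodic minimizers, which solve the recurrence on all of $\Z$, so any pointwise limit is automatically a global minimizer with no window bookkeeping, and the first-crossing normalization is intrinsic to the ordered set $\M_{p_n,q_n}$. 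A further, smaller point: your claim that the boundary data of $z^{(N)}$ already force $\tau_{p,q}x\geq x$ in the limit is not justified; after recentering and passing to the limit, the original boundary behaviour is pushed to infinity and no longer directly controls the Birkhoff direction. That direction must instead be read off from the identified asymptotes (or from the threshold inequalities), and to obtain both $x\in\BM^+_{q/p}$ and $\bar x\in\BM^-_{q/p}$ you would still need to verify that the two boundary orientations indeed land in opposite Birkhoff classes.
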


\begin{proof}
As throughout this section, we shall prove only the existence of $x\in \BM_{q/p}^+\backslash \M_{p,q}$. Let us take a sequence of rational numbers $\frac{q_n}{p_n} \nearrow \frac{q}{p}$ for $n\to \infty$ and a number $b\in \R$ with $x^-_0<b<x^+_0$. Since $\M_{p_n,q_n}$ is strictly ordered, we may define for every $n \in \N$ the sequence $y^n:=\min\{y\in \M_{p_n,q_n} \ | \ y_0\geq b \}$, so that it follows $y^n_{-p}+q=(\tau_{p,q}y^n)_0< b$. 

Because $\BM_{[q_1/p_1,q/p]}$ is compact and the rotation number is continuous in the topology of point-wise convergence (see \cite{gole01}), there is a convergent subsequence $\{y^{n_k}\}_k$ such that its limit $\lim_{k\to \infty}y^{n_k}=:x\in \BM$ has rotation number $\rho(x)=\frac{q}{p}$. 

By point-wise continuity, it holds that $x_0\geq b$ and $x_{-p}+q \leq b$, so $x_0\geq x_{-p}+q=(\tau_{p,q}x)_0$. This implies by Lemma \ref{weak solutions ordering} that $x\notin \BM_{q/p}^-\backslash \M_{p,q}$ and since there is no $y \in \M_{p,q}$ with $y_0=b$ by assumption, it follow that $x\notin \M_{p,q}$. Hence, $x\in \BM_{q/p}^+\backslash \M_{p,q}$.
\end{proof}

Obviously, the $x$ and $\bar x$ of Theorem \ref{heteroclinic connections} cross, illustrating that $\BM^+_{q/p}\cup \BM^-_{q/p}$ is in general not ordered.

\newpage 
\begin{small}
\bibliographystyle{amsplain}
\bibliography{Aubry-Mather}
\end{small}

\end{document}